\documentclass[11pt]{amsart}   	
\usepackage{geometry}             
\usepackage[dvipsnames]{xcolor}   		             		
\usepackage{graphicx}				
\usepackage{amssymb,mathrsfs}
\usepackage{amssymb,amsthm,amsmath,stmaryrd}
\SetSymbolFont{stmry}{bold}{U}{stmry}{m}{n}
\usepackage{tikz}
\usepackage{tikz-cd}
\usepackage{accents,upgreek,enumerate}
\usepackage[headings]{fullpage}
\usepackage{bm}
\usepackage[all]{xy}
\usepackage{caption}
\def\rest#1{\big|_{#1}}


\newenvironment{customthm}[1]
{\innercustomthm}
{\endinnercustomthm}
\setcounter{secnumdepth}{4}

\usetikzlibrary{calc}
\usetikzlibrary{fadings}
\usetikzlibrary{decorations.pathmorphing}
\usetikzlibrary{decorations.pathreplacing}

\newcounter{marginnote}
\setcounter{marginnote}{0}

\setlength{\parskip}{4pt}

\DeclareMathAlphabet{\mathpzc}{OT1}{pzc}{m}{it}

\usepackage[backref]{hyperref}
\hypersetup{
colorlinks   = true,          
urlcolor     = violet,          
linkcolor    = Purple,          
citecolor   = RubineRed             
}

\newtheorem{theorem}{Theorem}[subsection]
\newtheorem{corollary}[theorem]{Corollary}
\newtheorem{lemma}[theorem]{Lemma}
\newtheorem{proposition}[theorem]{Proposition}

\newtheorem{quasi-theorem}[theorem]{Quasi-Theorem}

\theoremstyle{definition}
\newtheorem{definition}[theorem]{Definition}

\newtheorem{guidingprinciple}[theorem]{Guiding Principle}
\newtheorem{construction}[theorem]{Construction}
\newtheorem{example}[theorem]{Example}
\newtheorem{blank remark}[theorem]{}

\theoremstyle{remark}
\newtheorem{rem1}[theorem]{Remark}
\newenvironment{remark}{\begin{rem1}\em}{\end{rem1}}

\newtheorem{not1}[theorem]{Notation}

\newcommand{\A}{{\mathbf{A}}}

\newcommand{\NN} {{\mathbf N}}		
\newcommand{\PP}{\mathbf{P}}         
		
\newcommand{\RR} {{\mathbf R}}		
\newcommand{\ZZ} {{\mathbf Z}}		

\def\R{\mathrm{R}}
\def\L{\mathrm{L}}

\def\LogStr{\mathbf{LogStr}}
\def\Mon{\mathbf{Mon}}
\def\Gm{\mathbf{G}_m}
\def\Proj{\operatorname{\mathbf{Proj}}}
\def\setminus{\smallsetminus}

\def\ctr{{\rm ctr}}

\newcommand{\Hom}{\operatorname{Hom}}

\DeclareMathOperator{\spec}{Spec}



\def\VZ{\mathcal{V\!Z}}
\def\VQ{\mathcal{V\!Q}}



\newcommand{\cal}{\mathcal}

\def\cM{{\cal M}}


\def\fM{\mathfrak{M}}



\newcommand{\plC}{\scalebox{0.8}[1.3]{$\sqsubset$}}



\newcommand{\Mbar}{\overline{\cM}\vphantom{\cM}}


\newcommand{\Spec}{\operatorname{Spec}}






\makeatletter
\def\blfootnote{\xdef\@thefnmark{}\@footnotetext}
\makeatother

\title[Moduli of stable maps in genus one: absolute theory]{{\larger M}oduli of stable maps in genus one {\it \&} logarithmic geometry I}

\date{}

\author[Ranganathan]{Dhruv Ranganathan}
\address{Dhruv Ranganathan \\ Department of Pure Mathematics and Mathematical Statistics\\
University of Cambridge}
\email{\href{mailto:dr508@cam.ac.uk}{dr508@cam.ac.uk}}

\author[Santos-Parker]{Keli Santos-Parker}
\address{Keli Santos-Parker \\ Department of Mathematics\\
University of Colorado}
\email{\href{mailto:keli.parker@colorado.edu}{keli.parker@colorado.edu}}

\author[Wise]{Jonathan Wise}
\address{Jonathan Wise \\ Department of Mathematics\\
University of Colorado}
\email{\href{mailto:jonathan.wise@colorado.edu}{jonathan.wise@colorado.edu}}

\begin{document}

\begin{abstract}
This is the first in a pair of papers developing a framework for the application of logarithmic structures in the study of singular curves of genus $1$. We construct a smooth and proper moduli space dominating the main component of Kontsevich's space of stable genus $1$ maps to projective space. A variation on this theme furnishes a modular interpretation for Vakil and Zinger's famous desingularization of the Kontsevich space of maps in genus $1$. Our methods also lead to smooth and proper moduli spaces of pointed genus $1$ quasimaps to projective space. Finally, we present an application to the log minimal model program for $\Mbar_{1,n}$. We construct explicit factorizations of the rational maps among Smyth's modular compactifications of pointed elliptic curves. 
\end{abstract}

\maketitle

\vspace{-0.2in}

\setcounter{tocdepth}{1}
\tableofcontents

\section{Introduction}


This paper is the first in a pair, exploring the interplay between tropical geometry, logarithmic moduli theory, stable maps, and  moduli spaces of genus~$1$ curves. We focus on the following two applications.

\noindent
{\bf I. Moduli of elliptic curves in $\PP^r$.} We construct a smooth and proper moduli space compactifying the space of maps from pointed genus $1$ curves to $\PP^r$. The natural map to the Kontsevich space is a desingularization of the principal component. A mild variation of this moduli problem yields a modular interpretation for Vakil and Zinger desingularization of the Kontsevich space in genus $1$. We establish analogous results for the space of genus $1$ pointed stable quasimaps to $\PP^r$.

\noindent
{\bf II. Birational geometry of moduli spaces.} The aforementioned application relies on general structure results concerning the geometry of the elliptic $m$-fold point.  We develop techniques to study such singularities using logarithmic methods. This leads to a modular factorization of the birational maps relating Smyth's spaces of pointed genus $1$ curves. 

Blowups of moduli spaces usually do not have modular interpretations. A technical contribution of this work is to demonstrate how tropical techniques allow one to establish modular interpretations for logarithmic blowups of logarithmic moduli spaces, by adding tropical information to the moduli problem.  The concept of minimality -- now standard in logarithmic moduli theory -- returns a corresponding moduli problem on schemes. In the sequel, we extend our results on desingularization to logarithmic targets by constructing toroidal moduli of genus $1$ logarithmic maps to any toric variety~\cite{RSW2}.

\subsection{The main component of genus~$1$ stable maps} The moduli spaces of stable maps in higher genus are essentially never smooth.  For almost all values of $r$ and $d$, the space $\Mbar_{1,n}(\PP^r,d)$ is reducible, not equidimensional, and highly singular.  A remarkable iterated blowup construction due to Vakil and Zinger, however, leads to a smooth moduli space $\widetilde\cM_{1,n}(\PP^r,d)$ compactifying the main component~\cite{VZ07,VZ08}. Hints of the geometry of this resolution are present in Vakil's thesis~\cite[Lemma~5.9]{Vak00}. 

The construction of the space $\widetilde\cM_{1,n}(\PP^r,d)$  is elegant, and it shares many of the excellent properties of $\Mbar_{0,n}(\PP^r,d)$, including smoothness, irreducibility, and normal crossings boundary. However, a closure operation implicit in the construction destroys any natural modular interpretation. As a consequence, the smoothness of $\widetilde\cM_{1,n}(\PP^r,d)$ requires a difficult technical analysis~\cite{HL10,VZ07}, and clouds attempts at conceptual generalizations, for instance into the logarithmic category or to quasimap variants. We first supply a moduli space that desingularizes the main component of $\Mbar_{g,n}(\PP^r,d)$ and then use this perspective to investigate generalizations and related geometries. 

\subsection{Modular desingularization} The central construction of this paper is a moduli space ${\mathfrak M}^{\mathrm{rad}}_{1,n}$ realizing a blowup of the moduli space of genus~$1$, $n$-marked, prestable curves:
\begin{equation*}
{\mathfrak M}^{\mathrm{rad}}_{1,n} \to \mathfrak M_{1,n}
\end{equation*}
This blowup parameterizes prestable curves $C$ equipped with a \textbf{radial alignment} of their tropicalizations $\plC$ -- this may be thought of as a total ordering on the vertices of the dual graph $\plC$ of $C$ that are not members of the smallest subcurve of genus $1$. We emphasize that this is an algebraic stack over schemes. See Section~\ref{sec: strategy} and Section~\ref{sec:aligned}.

Given a stable map $[f : C \to Y]$, the radial alignment determines both a semistable model $\widetilde C$ of $C$, and a projection $\tau : \widetilde C \to \overline C$ that contracts a genus~$1$ subcurve of $\widetilde C$ to a genus~$1$ singularity.  

\begin{customthm}{A}\label{thm: ordinary}
Let $Y$ be a smooth and proper complex variety and fix a curve class $\beta\in H_2(Y, \mathbf Z)$.  Consider the following data as a moduli problem over schemes:
\begin{enumerate}
\item a minimal family of $n$-marked, radially aligned, logarithmic curves, $C \to S$,
\item a stable map $f : C \to Y$ such that $f_\star[C] = \beta$, and
\item a factorization of $\widetilde C \to C \xrightarrow{f} \PP^r$ through the canonical contraction $\widetilde C \to \overline C$ that is nonconstant on a branch of the central genus~$1$ component of $\overline C$.
\end{enumerate}
This moduli problem is represented by a proper Deligne--Mumford stack $\VZ_{1,n}(Y, \beta)$, carrying a natural perfect obstruction theory.  The space $\VZ_{1,n}(\PP^r,d)$ is smooth and irreducible of expected dimension.
\end{customthm}

It is natural to wonder how the Vakil--Zinger blowup construction relates to $\VZ_{1,n}(\PP^r,d)$. The relationship arises via the concept of a \textbf{central alignment}, which can be thought of as a partial ordering of the vertices, whereas the radial alignment is total. 

\begin{customthm}{B}\label{thm: vz-space}
There exists a proper Deligne--Mumford stack $\VZ^\ctr_{1,n}(Y,\beta)$ parameterizing stable maps from minimal families of centrally aligned genus $1$, $n$-pointed curves to $Y$, satisfying the factorization property. When $Y = \PP^r$ there is an isomorphism
\[
\VZ^\ctr_{1,n}(\PP^r,d)\to \widetilde{\cM}_{1,n}(\PP^r,d).
\]
\end{customthm}


\subsection{The quasimap moduli} When there are no marked points on the source curve, there is an alternate non-singular compactification to $\VZ_{1}(\PP^r, d)$ via the theory of stable quasimaps, also called stable quotients, see~\cite{CFK,MOP}. Rather than a blowup of $\Mbar_1(\PP^r,d)$, the quasimap space $\mathcal{Q}_1(\PP^r,d)$ is a contraction, fitting into a diagram
\[
\VZ_1(\PP^r,\beta)\to \Mbar_1(\PP^r,d)\to \mathcal Q_1(\PP^r,d).
\]
In this sense, the stable quasimap spaces are efficient compactifications, giving one point of access to the geometry of elliptic curves in $\PP^r$. When marked points are present, the stable quotient spaces are no longer smooth, and can be essentially as singular as the space of maps. 

We desingularize the pointed spaces using radial alignments. As before, a radially aligned curve $C$ equipped with a quasimap to $\PP^r$ produces a semistable model $\widetilde C$ of $C$ and a contraction $\widetilde C \to \overline C$ of the genus~$1$ component.

\begin{customthm}{C}\label{thm: quotients}
Fix a degree $d$. 
Consider the following data as a moduli problem on schemes:
\begin{enumerate}
\item a minimal family of $n$-marked, radially aligned, logarithmic curves, $C \to S$, and
\item a stable quasimap $f$ from $C$ to $\PP^r$ of degree $d$, such that
\item $f$ factors through a quasimap $\overline C \to \PP^r$ having positive degree on at least one branch of the genus~$1$ component.
\end{enumerate}
This moduli problem is represented by a smooth, proper Deligne--Mumford stack $\VQ_{1,n}(\PP^r, d)$ of the expected dimension.
\end{customthm}

In both stable map and quasimap theories, smooth is proved conceptually, without a local analysis of the singularities of the ordinary moduli spaces, which is the core of previous approaches to the problem.

\subsection{Elliptic singularities {\it \&} logarithmic geometry} For each integer $m\geq 1$, the elliptic $m$-fold point is the unique Gorenstein genus $1$ singularity with $m$ branches, see Section~\ref{sec: genus-1-singularities}. For each $m$, Smyth constructs a proper and irreducible moduli space $\Mbar_{1,n}(m)$ of curves with elliptic $\ell$-fold singularities, for $\ell\leq m$ and an appropriate global stability condition. However, the spaces are smooth if and only if $m\leq 5$. By the irreducibility, for each $m$, there is a rational map
\[
\Mbar_{1,n}\dashrightarrow \Mbar_{1,n}(m).
\]
We construct a factorization of this rational map by building a single smooth moduli space that maps to both, via operations on its universal curve. 

\begin{customthm}{D}
Let $\Mbar_{1,n}^{\mathrm{rad}}$ denote the moduli space of radially aligned $n$-pointed genus $1$ curves. There is a canonical factorization of the rational map $\Mbar_{1,n}\dashrightarrow \Mbar_{1,n}(m)$ as
\[
\begin{tikzcd}
& \Mbar_{1,n}^{\mathrm{rad}} \arrow{dr}{\phi_m} \arrow[swap]{dl}{\pi} & \\
\Mbar_{1,n}\arrow[dashrightarrow]{rr} & & \Mbar_{1,n}(m).
\end{tikzcd}
\]
The map $\pi$ is a logarithmic blowup, while the map $\phi_m$ induces a contraction of the universal curve of $\Mbar_{1,n}^{\mathrm{rad}}$.
\end{customthm}

The space $\Mbar_{1,n}^{\mathrm{rad}}$ has the best properties of both spaces in the lower part of the diagram -- it is smooth with a normal crossings boundary, the boundary combinatorics is explicit, and it sees the geometry of elliptic $m$-fold singular curves. 

\subsection{Previous work on genus $1$ maps} There has been a substantial amount of work on the moduli space of genus $1$ stable maps to $\PP^r$ in the last decade, which we can only summarize briefly. The seminal application of the Vakil--Zinger desingularization was to the proof Bershadsky-Cecotti-Ooguri-Vafa's prediction for the genus $1$ Gromov--Witten invariants of Calabi--Yau hypersurfaces~\cite{Z09}. The desingularization was revisited by Hu and Li, who provided a different perspective on the blowup construction~\cite{HL10}. While the techniques in the present text handle arbitrary proper algebraic targets, there is a ``sharp Gromov compactness'' result for arbitrary K\"ahler targets using symplectic Gromov--Witten theory by work of Zinger~\cite{Z09b}. It would be interesting to develop a modular interpretation, as we do here, for K\"ahler and symplectic targets. We imagine that our methods would work equally well for logarithmic analytic spaces, but Parker's category of exploded manifolds may already contain the essential ingredients~\cite{Par12a}.  

The situation is simpler in the absence of marked points. The theories of stable quotients and quasimaps, due to Marian--Oprea--Pandharipande and Ciocan-Fontanine--Kim, provide smooth and proper moduli of genus $1$ curves in $\PP^r$ with no marked points~\cite{CFK,MOP}. These spaces have a beautiful geometry -- Cooper uses the modular interpretation to show that $\mathcal Q_1(\PP^r,d)$ is rationally connected with Picard number $2$, explicitly computes the canonical divisor, and gives generators for the Picard group~\cite{Coop15}. It would be natural to use the desingularization here to extend Cooper's study to the pointed space. Kim's proposal of maps to logarithmic expansions also produces a nonsingular moduli space of maps to $\mathbf P^r$ relative to a smooth divisor, provided there are no ordinary or relative marked points~\cite{Kim08}. 

A different direction was pursued in an elegant paper of Viscardi~\cite{Vis}, who extended Smyth's construction to the setting of maps. The resulting spaces $\Mbar_{1,n}^{(m)}(Y,d)$ are proper, smooth when all numerical parameters are small, and irreducible when $m$ is large. In fact, for $m\gg 0$, the space is smooth over the singular Artin stack $\fM_{1,n}(m)$ parameterizing genus $1$ curves with at worst elliptic $m$-fold singularities, and thus, in spirit, his approach is close to ours. Crucially, however, our base moduli space of radially aligned curves has a better deformation theory, so that the moduli space is smooth when $Y = \PP^r$ and not merely relatively smooth over a non-smooth base.

\subsection{User's guide} The central technical result of this paper is the construction of the moduli space of prestable radially aligned genus $1$ curves in Section~\ref{sec:aligned}. The corresponding moduli space of stable objects is related to Smyth's space via a contraction of the universal curve in Theorem~\ref{thm: contraction to smyth}. The space $\VZ_{1,n}(Y,\beta)$ is constructed in Section~\ref{sec: vz-construction}, shown to be proper in Theorem~\ref{vz-properness}, and to have a virtual class in Theorem~\ref{thm: vfc}. The non-singularity for target $\PP^r$ is then established in Theorem~\ref{thm: vz-smoothness} via deformation theory, and the comparison with Vakil--Zinger's construction is undertaken in Section~\ref{vz-comparison}. We desingularize the quasimaps spaces in Section~\ref{sec: quasimaps}. 

\subsection*{Acknowledgements} Thanks are due to Dan Abramovich, Dori Bejleri, Sebastian Bozlee, Renzo Cavalieri, Dave Jensen, Eric Katz, Diane Maclagan, Davesh Maulik, Sam Payne, David Speyer, Ravi Vakil, and Jeremy Usatine for helpful conversations and much encouragement. The authors learned that they were working on related ideas after a seminar talk of D.R. at Colorado State University when J.W. was in the audience; we thank Renzo Cavalieri for creating that opportunity. We are grateful to the anonymous referees, whose suggestions led to an improved text.

\subsection*{Funding} D.R. was partially supported by NSF grant DMS-1128155 (Institute for Advanced Study) and J.W. was partially supported by NSA Young Investigator's Grants H98230-14-1-0107 and H98230-16-1-0329.

\section{Preliminaries}

\subsection{Genus 1 singularities}\label{sec: genus-1-singularities} 
Let $C$ be a reduced curve over an algebraically closed field $k$ and let $(C,p)$ be an isolated singularity. There are two basic invariants of this singularity. Let 
\[
\pi: (\widetilde C,p_1,\ldots, p_m)\to (C,p)
\]
be the normalization, where $\{p_i\}$ is the inverse image of $p$. The number $m$ is referred to as the \textbf{number of branches of the singularity}. The second invariant, the \textbf{$\delta$-invariant}, is defined by
\[
\delta: = \dim_k \bigl( \pi_\star (\mathscr O_{\widetilde C})/\mathscr O_C \bigr).
\]

Let $\mathscr A \subset \pi_\star \mathscr O_{\widetilde C}$ be the subring of functions that are well-defined on the underlying topological space of $C$.  In a neighborhood of a point $p$ of $C$, the ring $\mathscr A$ can be constructed as a fiber product:
\begin{equation*}
\pi_\star \mathscr O_{\widetilde C} \mathop\times_{\pi_\star \mathscr O_{\pi^{-1} (p)}} \mathscr O_p
\end{equation*}
Then $\mathscr A$ is the structure sheaf of a scheme, called the \textbf{seminormalization} of $C$. 

\begin{definition}
The \textbf{genus} of a singularity $(C,p)$ is the quantity
\begin{equation*}
g = \dim_k \bigl( \mathscr A / \mathscr O_C \bigr) 
\end{equation*}
where $\mathscr A$ is the structure sheaf of the seminormalization of $C$.
\end{definition}

By construction, we have
\[
g = \delta-m+1.
\]

The term genus is consistent with the usual notion of arithmetic genus:  if $C$ is proper (so that the arithmetic genus is well-defined), its arithmetic genus differs from the genus of its seminormalization by $g$.  Alternatively, the stable reduction of a $1$-parameter smoothing of $C$ replaces $p$ with a nodal curve of arithmetic genus $g$.

We will be concerned with singularities of genus $1$ in this paper.

\begin{proposition}
There is a unique Gorenstein singularity of genus $1$ with $m$ branches. Specifically, if $m = 1$ the singularity is the cusp $\mathbf{V}(y^2-x^3)$, if $m = 2$ the singularity is the ordinary tacnode $\mathbf V(y^2-yx^2)$, and for $m\geq 3$, the singularity is the union of $m$ general lines through the origin in $\mathbf A^{m-1}$. 
\end{proposition}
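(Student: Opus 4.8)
The plan is to establish the result in two movements: first a classification of possible Gorenstein genus-$1$ singularities with $m$ branches, then a verification that the listed models achieve it and are unique. Throughout, fix a singularity $(C,p)$ with $m$ branches, normalization $\pi \colon (\widetilde C, p_1, \dots, p_m) \to (C,p)$, and write $R = \mathscr O_{C,p}$, $\widetilde R = \pi_\star \mathscr O_{\widetilde C}$ at $p$, both viewed inside the total ring of fractions. The genus-$1$ hypothesis is $\dim_k(\widetilde R / R) = \delta = m$, using $g = \delta - m + 1 = 1$. I would work with the conductor ideal $\mathfrak c = \Hom_R(\widetilde R, R)$, the largest $\widetilde R$-submodule of $R$.

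First, the Gorenstein condition must be converted into a statement about the conductor. The key classical fact (due to Serre, and in the curve-singularity setting appearing in work of Bass, Kunz, and others) is that a reduced curve singularity $R$ is Gorenstein if and only if $\dim_k(\widetilde R/R) = \dim_k(R/\mathfrak c)$; equivalently, $\ell(\widetilde R/\mathfrak c) = 2\delta$, so with $\delta = m$ we get $\dim_k(\widetilde R/\mathfrak c) = 2m$. Since $\widetilde R \cong \prod_{i=1}^m k[[t_i]]$, its conductor into any subring is of the form $\mathfrak c = \prod_i (t_i^{a_i})$ with $\sum a_i = \dim_k(\widetilde R/\mathfrak c) = 2m$ — wait, more carefully, $\dim_k(\widetilde R/\mathfrak c) = \sum_i a_i$, so $\sum_i a_i = 2m$. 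Combining with $\dim_k(\widetilde R / R) = m$ forces $\dim_k(R/\mathfrak c) = m$ as well. Now $\mathfrak c \subseteq R$ and $\mathfrak c \subseteq \mathfrak m = $ maximal ideal, so $R/\mathfrak c$ contains $k = R/\mathfrak m$; the tangent-space/embedding-dimension analysis will show $\dim_k(\mathfrak m/\mathfrak c)$ is at most $m-1$ with equality a strong constraint. The decisive step is to bound the embedding dimension: I claim $\dim_k \mathfrak m / \mathfrak m^2 \le m$, and in the Gorenstein genus-$1$ case the structure forces either (a) $m \ge 3$, embedding dimension exactly $m-1$, all $a_i = 2$, with $\mathfrak m^2 = \mathfrak c$, which pins down $R$ as the subring of $\prod k[[t_i]]$ spanned by $1$ and the linear forms — precisely $m$ general concurrent lines in $\A^{m-1}$; or (b) $m = 1$, giving $\delta = 1$, the unique such being the cusp $y^2 = x^3$; or (c) $m = 2$, giving $\delta = 2$ with $(a_1,a_2) = (2,2)$, which after normalizing coordinates is the tacnode $y^2 = yx^2$.

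For the case analysis I would proceed as follows. When $m = 1$: $\delta = 1$ and Gorenstein unibranch singularities of $\delta$-invariant $1$ are well-known to be cusps, $R \cong k[[t^2, t^3]]$. When $m \ge 2$: examine the conductor exponents. The Gorenstein/symmetry condition on the value semigroup (Kunz, Delgado) restricts the $a_i$; together with $\sum a_i = 2m$ and the requirement that $R/\mathfrak c$ embeds with $\dim_k = m$ into $\prod k[[t_i]]/(t_i^{a_i})$ compatibly, one shows each $a_i \le 2$, hence all $a_i = 2$. Then $R$ sits between $\mathfrak c = \prod (t_i^2) = \mathfrak m^2 \cdot(\text{stuff})$ and $\widetilde R$, and $R/\mathfrak c \hookrightarrow \prod (k \oplus k t_i)$ is an $(m)$-dimensional subalgebra containing the identity; such a subalgebra is spanned by $1$ together with an $(m-1)$-dimensional subspace $L$ of the $2m$-dimensional... rather, of the span of the $t_i$ modulo $t_i^2$, subject to closure under multiplication (products land in $\mathfrak c$, automatically zero), and subject to the seminormality-type condition that no idempotents are adjoined. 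For $m = 2$ this is the tacnode; for $m \ge 3$ a dimension count of $L$ forces $L$ to be a generic hyperplane in $\bigoplus k t_i$, which is exactly the coordinate ring of $m$ general lines through the origin in $\A^{m-1}$. Conversely one checks directly that each listed model is Gorenstein (compute $\omega$, or exhibit the conductor and verify $\dim_k \widetilde R/\mathfrak c = 2m$) and has genus $1$.

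The main obstacle I anticipate is the "general position" clause for $m \ge 3$: proving not merely that $R$ is the coordinate ring of $m$ concurrent lines in $\A^{m-1}$, but that Gorenstein-ness forces those lines to be in \emph{linearly general position} (any $m-1$ of the $m$ directions are linearly independent). This is where the symmetry of the conductor, equivalently the condition that $\omega_{C,p}$ is generated by a single element that is "nonzero on every branch," does real work — a degenerate configuration of lines fails to be Gorenstein because the dualizing module acquires extra generators. I would handle this by writing $\omega_{C,p} \subseteq \widetilde R \otimes (\text{meromorphic differentials})$ explicitly via the residue/trace pairing, identifying its generator as $\sum_i dt_i / t_i$ up to scale, and observing that this is a well-defined element of the appropriate module precisely when the branch directions are generic. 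Alternatively, one can invoke that any Gorenstein curve singularity of genus $1$ is, by a theorem reducing to the smoothing picture, a limit of $m$-fold points, and count parameters.
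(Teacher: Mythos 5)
The paper does not actually prove this proposition --- its ``proof'' is a citation to Smyth's Proposition~A.3 --- so your conductor-theoretic classification is more self-contained than what the paper offers, and it is in the same spirit as Smyth's own argument. Your skeleton is correct: genus~$1$ gives $\delta = m$; Serre's criterion (Gorenstein $\Leftrightarrow \dim_k(R/\mathfrak c) = \delta$, equivalently $\dim_k(\widetilde R/\mathfrak c) = 2\delta$) gives $\sum_i a_i = 2m$ and $\dim_k(R/\mathfrak c) = m$; and once all $a_i = 2$, the ring is determined by the subspace $L = (R \cap \mathfrak m_{\widetilde R})/\mathfrak c$ inside $\bigoplus_i k\,t_i$.

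Two steps, however, are asserted rather than proved, and one of them is wrong as stated. First, ``one shows each $a_i \le 2$'' is genuine content: genus~$1$ alone does not force it (the transverse union of a cusp and a smooth branch has $m=2$, genus~$1$, and conductor exponents $(2,1)$), so you must show that Gorenstein-ness rules out any $a_i \ne 2$; an appeal to semigroup symmetry can be made to work but needs to be carried out in the multibranch setting. Second, and more seriously, for $m \ge 3$ it is \emph{not} true that ``a dimension count of $L$ forces $L$ to be a generic hyperplane'': \emph{every} hyperplane $L \subset \bigoplus_i k\,t_i$ yields a subalgebra $R = k + L + \prod_i(t_i^2)$ with $m$ branches, $\delta = m$, and genus~$1$, since all products land in $\prod_i(t_i^2)$. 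What eliminates the degenerate configurations is Gorenstein-ness itself: if $L$ contains a coordinate direction $t_i$ (equivalently, the coefficient $c_i$ in the unique linear relation among the branch directions vanishes), then $t_i k[[t_i]] \subset R$, the conductor exponent $a_i$ drops to~$1$, and $\dim_k(\widetilde R/\mathfrak c) \le 2m-1 < 2\delta$, violating Serre's equality; conversely, if all $c_i \ne 0$ one checks $\mathfrak c = \prod_i (t_i^2)$ exactly and the singularity is Gorenstein. This is precisely the ``main obstacle'' you flag at the end, and your residue-pairing strategy does resolve it, except that the Rosenlicht generator of $\omega_{C,p}$ is $\bigl(c_i\, dt_i/t_i^2\bigr)_i$ --- double poles weighted by the relation coefficients --- not $\sum_i dt_i/t_i$; the computation that this element generates $\omega_{C,p}$ over $R$ exactly when all $c_i \ne 0$ is where the work lies. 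With those points filled in (plus the routine check that the three listed models are Gorenstein of genus~$1$, and the observation that all choices of $L$ with all $c_i \ne 0$ are related by rescaling the $t_i$, giving uniqueness), your outline becomes a complete proof.
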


\begin{proof}
See~\cite[Proposition~A.3]{Smyth}.
\end{proof}

\begin{proposition} \label{prop:dualizing-trivial}
The dualizing sheaf of a Gorenstein curve of genus~$1$ with no genus~$0$ tails is trivial.
\end{proposition}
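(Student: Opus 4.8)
The plan is to produce a nowhere‑vanishing global section of $\omega_C$. Since $C$ is Gorenstein, $\omega_C$ is a line bundle, of total degree $2p_a(C)-2=0$. The first step is to convert the hypothesis on tails into positivity: the condition that $C$ have no genus~$0$ tails is equivalent to $\deg(\omega_C\big|_{C_i})\ge 0$ for every irreducible component $C_i$. Indeed, from the standard relation between the dualizing sheaf of $C$ and that of a subcurve $Z$,
\[
\deg\bigl(\omega_C\big|_Z\bigr)\;=\;2p_a(Z)-2+\operatorname{length}\bigl(Z\cap\overline{C\setminus Z}\bigr)
\]
(read off from the description of $\omega_C$ via the normalization and the conductor, or from the Mayer--Vietoris sequence together with Serre duality; compare \cite[Appendix~A]{Smyth}), the left side is negative for an irreducible $C_i$ exactly when $p_a(C_i)=0$ and $C_i$ meets the rest of $C$ in a single reduced point --- that is, exactly when $C_i$ is a genus~$0$ tail. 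Since $\sum_i\deg(\omega_C|_{C_i})=\deg\omega_C=0$, the absence of genus~$0$ tails forces $\deg(\omega_C|_{C_i})=0$ for \emph{every} component.

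Next, Serre duality on the Gorenstein curve $C$ gives $h^0(\omega_C)=h^1(\mathcal O_C)=p_a(C)=1$, so there is a nonzero global section $s$ of $\omega_C$; fix one. The main idea is that $s$ vanishes identically on no component. Suppose otherwise, and let $Z$ be the union of the components on which $s\equiv 0$; then $Z\ne\emptyset$, and $Z\ne C$ since $s\ne 0$. Because $C$ is connected there is a point $p\in Z\cap\overline{C\setminus Z}$. Writing $s=h\cdot\eta$ near $p$ with $\eta$ a local generator of the line bundle $\omega_C$ and $h\in\mathcal O_{C,p}$, the vanishing of $s$ along the branches of $Z$ at $p$ says $h(p)=0$; since the value of a regular function at a point of a reduced curve equals its value on each branch through that point, $s$ vanishes at $p$ along \emph{every} branch at $p$. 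In particular $s$ vanishes at $p$ along any component $C_j\not\subseteq Z$ through $p$. But $s\big|_{C_j}$ is a section of the line bundle $\omega_C|_{C_j}$, which has degree $0$ on the integral curve $C_j$; such a section is either nowhere zero or identically zero, so $s|_{C_j}\equiv 0$, hence $C_j\subseteq Z$ --- a contradiction. Thus $C=Z$, contradicting $s\ne0$.

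Therefore $s$ is nonzero on every component, so $s\colon\mathcal O_C\to\omega_C$ is injective with cokernel $T$ supported in dimension $0$. From $0\to\mathcal O_C\xrightarrow{\,s\,}\omega_C\to T\to 0$ we get $\operatorname{length}(T)=\chi(T)=\chi(\omega_C)-\chi(\mathcal O_C)=-2\chi(\mathcal O_C)=0$, using $\chi(\omega_C)=-\chi(\mathcal O_C)$ (Serre duality) and $\chi(\mathcal O_C)=1-p_a(C)=0$. Hence $T=0$ and $s$ is an isomorphism $\mathcal O_C\xrightarrow{\ \sim\ }\omega_C$.

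The step requiring the most care is the first one: for genus~$1$ curves carrying elliptic $m$‑fold points the relevant subcurves are not Cartier divisors, so one cannot simply invoke adjunction, and the degree formula --- and hence the passage from ``no genus~$0$ tails'' to ``$\omega_C$ has non‑negative degree on every component'' --- must be obtained via the Rosenlicht description of $\omega_C$ and the conductor, or via Mayer--Vietoris and duality. Once that positivity is available, the rest (Serre duality for a section, the connectedness argument for non‑vanishing on components, and the Euler‑characteristic count to upgrade to nowhere‑vanishing) is formal.
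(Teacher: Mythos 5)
Your proof is correct, and it takes a genuinely different route from the paper's. The paper argues by classification: a Gorenstein genus~$1$ curve with no genus~$0$ tails is either smooth, a ring of rational curves, or an elliptic $m$-fold point, and in each case one checks directly that $\omega_C$ restricts trivially to every component and that explicit local generators (recorded in the sequel paper) patch to a global trivialization. You instead avoid the classification entirely: you use the restriction formula $\deg\bigl(\omega_C\big|_Z\bigr)=2p_a(Z)-2+\operatorname{length}\bigl(Z\cap\overline{C\setminus Z}\bigr)$ (valid for Gorenstein $C$ with the scheme-theoretic intersection, e.g.\ via $\omega_Z\simeq I_{\overline{C\setminus Z}}\cdot\omega_C$ and an Euler-characteristic count) to convert ``no genus~$0$ tails'' into $\deg(\omega_C|_{C_i})=0$ on every component, then produce a section by Serre duality and upgrade it to a trivialization by the connectedness argument plus the fact that a nonzero section of a degree-zero line bundle on an integral projective curve is nowhere vanishing, finishing with $\chi(\omega_C)=\chi(\mathscr O_C)=0$. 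One small caveat: your ``exactly when $C_i$ is a genus~$0$ tail'' requires reading ``tail'' as a rational component whose scheme-theoretic intersection with the rest of $C$ has length one (so tacnode and $m$-fold branches are not tails); that is the intended reading here, and only the implication ``no tails $\Rightarrow$ nonnegative degree'' is used, so nothing breaks. What each approach buys: the paper's proof is very short given the classification of Gorenstein genus~$1$ singularities and the explicit generators it cites, while yours is self-contained modulo the standard restriction formula, does not need uniqueness of the genus~$1$ Gorenstein singularity, and would apply verbatim to any reduced, connected, proper Gorenstein curve of arithmetic genus~$1$ without rational tails.
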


\begin{proof}
Let $C$ be a Gorenstein, genus~$1$ curve with no genus~$0$ tails.  Then $C$ is either smooth, a ring of rational curves, or an elliptic $m$-fold point.  If $C$ is smooth then, $\omega_C$ has degree zero and has a nonzero global section, hence is trivial.  If $C$ is a ring of rational curves, then $\omega_C$ restricts to have degree zero on each component, yet has a nonzero global section, hence is trivial.  Finally, if $C$ is an elliptic $m$-fold point then a local calculation shows that $\omega_C$ restricts to $\omega_{C_i}(2) \simeq \mathscr O_{C_i}$ for each rational component $C_i$ of $C$.  One can then find explicit local generators for $\omega_C$ that extend globally. Such generators are, for instance, recorded in~\cite[Proposition~2.1.1]{RSW2}.
\end{proof}

\begin{corollary} \label{cor:twisted-dualizing}
	Suppose that $C$ is a connected semistable curve of genus~$1$ curve with a nonempty collection of marked points $p_1, \ldots, p_n$ and let $\Sigma = \sum p_i$.  Then $H^1(C, \omega_C^{\otimes k}(k\Sigma)) = 0$.
\end{corollary}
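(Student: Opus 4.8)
The plan is to dualize. Because $C$ is Gorenstein, Serre duality identifies $H^1(C,\omega_C^{\otimes k}(k\Sigma))$ with the dual of $\Hom_{\mathscr O_C}(\omega_C^{\otimes k}(k\Sigma),\omega_C) = H^0(C,\omega_C^{\otimes(1-k)}(-k\Sigma))$, so it suffices to show this last group of sections vanishes. (The hypothesis $k\geq 1$ is needed and will be used: for $k=0$ one has $H^1(C,\mathscr O_C)\neq 0$.) The case $k=1$ is immediate, since a section of $\mathscr O_C(-\Sigma)$ is a regular function on the connected proper reduced curve $C$ vanishing along the nonempty divisor $\Sigma$, hence zero.

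For $k\geq 2$ I would argue by a componentwise degree estimate. If $C$ is irreducible it is either smooth of genus $1$ or a one-nodal rational curve; in either case $\omega_C$ is trivial, so $N:=\omega_C^{\otimes(1-k)}(-k\Sigma)$ has negative degree on the integral curve $C$ and therefore has no nonzero section. Otherwise every irreducible component $C_i$ of $C$ is smooth; writing $g_i,n_i,s_i$ for its genus, number of nodes, and number of marked points, one has $\deg(N|_{C_i}) = (1-k)(2g_i-2+n_i) - k s_i$, and the semistability inequality $2g_i-2+n_i+s_i\geq 0$ — together with $n_i\geq 1$, since $C$ is connected and reducible — yields $\deg(N|_{C_i})\leq 0$, with equality exactly when $C_i\cong\PP^1$ meets the rest of $C$ in two nodes and carries no marked point. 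Hence a global section $s$ of $N$ restricts to $0$ on every component that is not such an exceptional $\PP^1$, and on each exceptional $\PP^1$ we have $N|_{C_i}\cong\mathscr O_{\PP^1}$, so $s|_{C_i}$ is a constant.

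To conclude, I would suppose $s\neq 0$ and derive a contradiction. If $s$ is a nonzero constant on one exceptional $\PP^1$, then $s$ does not vanish at either of that component's two nodes, hence does not vanish on the adjacent components either, which are then themselves exceptional $\PP^1$'s carrying $s$ as a nonzero constant. So the union of the components on which $s$ does not vanish is nonempty and closed under adjacency, hence is a union of connected components of $C$, so equals $C$ by connectedness; but then every component of $C$ is a marking-free rational curve and $\Sigma=\emptyset$, contrary to hypothesis. Therefore $s=0$.

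The one genuinely delicate point — and the only place the hypotheses $\Sigma\neq\emptyset$ and semistability enter — is the handling of the exceptional rational ``bridge'' components, on which the degree estimate by itself permits a nonzero constant section; the propagation argument of the previous paragraph is what rules these out. A variant that sidesteps the bookkeeping is an induction on the number of components: one peels off a rational tail $T$ using the normalization sequence for $C=\overline{C\setminus T}\cup T$, together with $\omega_C|_{\overline{C\setminus T}}=\omega_{\overline{C\setminus T}}(\text{attaching node})$ and the vanishing of $H^1$ of the line bundles that occur on $T\cong\PP^1$, taking smooth and one-nodal genus $1$ curves and rings of rational curves — where $\omega_C$ is trivial by Proposition~\ref{prop:dualizing-trivial} — as the base cases.
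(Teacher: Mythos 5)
Your argument is correct in substance but takes a genuinely different route from the paper. The paper never dualizes globally: it splits $H^1(C,\omega_C^{\otimes k}(k\Sigma))$ as a direct sum over components, using that the non-circuit components form a tree, kills the contribution of each non-circuit rational component because $\omega_C^{\otimes k}(k\Sigma)$ has non-negative degree there, and only on the circuit $C_0$ invokes Proposition~\ref{prop:dualizing-trivial} and duality, reducing to $H^0(C_0,\mathscr O_{C_0}(-k\sum q_i))=0$. You instead apply Serre duality on all of $C$ at once and then show $H^0(C,\omega_C^{\otimes(1-k)}(-k\Sigma))=0$ by componentwise degree estimates; the price of dualizing globally is that the degree-zero ``bridge'' components are not ruled out by degrees alone, and your propagation-by-adjacency argument (using connectedness and $\Sigma\neq\emptyset$) is exactly what the paper's $H^1$-side argument avoids having to do, since a degree-zero bundle on $\PP^1$ has vanishing $H^1$ with no further discussion. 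Both proofs use the triviality of the dualizing sheaf on the circuit; yours buys a self-contained argument that does not need the tree-decomposition of $H^1$, at the cost of the extra combinatorial step. Your explicit remark that $k\geq 1$ is needed is consistent with how the corollary is used in the paper.

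One step is stated incorrectly, though it is easily repaired: when $C$ is reducible it is \emph{not} true that every irreducible component is smooth. The circuit may be a single rational component with a self-node (e.g.\ an irreducible nodal cubic with a marked rational tail attached), which is a perfectly good connected semistable genus-$1$ curve. For such a component your formula should be read with the arithmetic genus (equivalently, with the normalization genus and self-node branches counted): one gets $\deg\bigl(\omega_C^{\otimes(1-k)}(-k\Sigma)\big|_{C_i}\bigr)=(1-k)\,n_i^{\mathrm{ext}}-k s_i<0$ for $k\geq 2$ since $n_i^{\mathrm{ext}}\geq 1$ by connectedness, and the component is integral, so the restriction of a global section still vanishes there and your propagation argument is unaffected. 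With that case added, the proof is complete.
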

\begin{proof}
	Let $C_0$ be the circuit component (or union of components) of $C$ and $C_i$ the remaining components.  The dual graph of the $C_i$ is a tree, so
	\begin{equation*}
		H^1(C, \omega_C^{\otimes k}(k\Sigma)) = \sum H^1(C_i, \omega_C^{\otimes k}(k \Sigma)\big|_{C_i}) .
	\end{equation*}
	If $i \neq 0$ then $C_i$ is rational and by the semistability of $C$ there are at least two marked points or nodes on $C_i$.  Therefore $\omega_C^{\otimes k}(k\Sigma) \big|_{C_i}$ has non-negative degree and hence also vanishing $H^1$.  On $C_0$, we can identify $\omega_C(\Sigma) \big|_{C_0}$ with $\omega_{C_0}(\sum q_i)$, where the $q_i$ are the external nodes and marked points of $C_0$.  By Proposition~\ref{prop:dualizing-trivial}, we know that $\omega_{C_0}$ is trivial, so $H^1(C_0, \omega_C^{\otimes k}(k \Sigma)\big|_{C_0})$ is dual to $H^0(C_0, \mathscr O_{C_0}(-k \sum q_i))$.  But the only sections of $\mathscr O_{C_0}$ are constants, and there is at least one $q_i$, so $H^0(C_0, \mathscr O_{C_0}(-k \sum q_i)) = 0$.
\end{proof}

\subsection{Tropical curves}
We follow the presentation of tropical curves from~\cite[Sections 3 {\it \&} 4]{CCUW}, introducing families of tropical curves. We refer the reader to loc. cit. for a more detailed presentation. 

\begin{definition}
A \textbf{pre-stable $n$-marked tropical curve} $\plC$ is a finite graph $G$ with vertex and edge sets $V$ and $E$, enhanced with the following data
\begin{enumerate}
\item a \textbf{marking function} $m: \{1,\ldots,n\}\to V$,
\item a \textbf{genus function} $g:V\to \NN$,
\item a \textbf{length function} $\ell: E\to \RR_{+}$.
\end{enumerate}
Such a curve is said to be a \textbf{stable $n$-marked tropical curve} if (1) at every vertex $v$ with $g(v) = 0$, the valence of $v$ (including the markings) is at least $3$, and (2) at every vertex $v$ with $g(v) = 1$, the valence of $v$ (including the markings) is at least $1$. The \textbf{genus} of a tropical curve $\plC$ is the sum 
\[
g(\plC) = h_1(G)+\sum_{v\in V} g(v)
\]
where $h_1(G)$ is the first Betti number of the graph $G$.
\end{definition}

In practice, we will intentionally confuse a tropical curve $\plC$ with its geometric realization --- a metric space on the topological realization of $G$, such that an edge $e$ is metrized to have length $\ell(e)$ and if $m(i) = v$, we attached the ray $\RR_{\geq 0}$ to the vertex $v$, as a half-edge with unbounded edge length.

More generally, one may permit the length function $\ell$ above to take values in an arbitrary toric monoic $P$. This presents us with a natural notion of a family of tropical curves.

\begin{definition}
Let $\sigma$ be a rational polyhedral cone with dual monoid $S_\sigma$ (the integral points of its dual cone). A \textbf{family of $n$-marked prestable tropical curves over $\sigma$} is a tropical curve whose length function takes values in $S_\sigma\setminus \{0\}$.
\end{definition}

To see that such an object is, in an intuitive sense, a family of tropical curves, observe that the points of $\sigma$ can be identified with monoid homomorphisms
\[
\varphi: S_\sigma \to \RR_{\geq 0}.
\]
Given such a homomorphism $\varphi$ and an edge $e\in E$, the quantity $\varphi(\ell(e))$ is an ``honest'' length for $e\in E$. The resulting tropical curve can be thought of as the fiber of the family over $[\varphi]\in \sigma$.

\subsection{Logarithmic {\it \&} tropical curves}\label{sec: log-trop}
Let $(S,M_S)$ be a logarithmic scheme. A \textbf{family of logarithmically smooth curves over $S$} (or \textbf{logarithmic curve over $S$} for short) is a logarithmically smooth and proper morphism
\[
\pi: (C,M_C) \to (S,M_S),
\]
of logarithmic schemes with $1$-dimensional connected fibers with two additional technical conditions: $\pi$ is required to be integral and saturated. These are conditions on the morphism $\pi^\flat: M_S\to M_C$ that guarantee that $\pi$ is flat with reduced fibers. The \'etale local structure theorem for such curves, due to F. Kato, characterizes such families locally on the source~\cite{Kat00}.  We write $\mathfrak M_{g,n}^{\log}$ for the stack of families of connected, proper, $n$-marked, genus~$g$ families of logarithmic curves over logarithmic schemes.

\begin{theorem}
Let $C\to S$ be a family of logarithmically smooth curves. If $x\in C$ is a geometric point, then there is an \'etale neighborhood of $C$ over $S$, with a strict morphism to an \'etale-local model $\pi:V\to S$, and $V\to S$ is one of the following:
\begin{itemize}
\item { (the smooth germ)} $V  = \mathbf A^1_S\to S$, and the logarithmic structure on $V$ is pulled back from the base;
\item { (the germ of a marked point)} $V = \mathbf A^1_S\to S$, with logarithmic structure pulled back from the toric logarithmic structure on $\mathbf A^1$;
\item { (the node)} $V = \mathscr O_S[x,y]/(xy = t)$, for $t\in \mathscr O_S$.  The logarithmic structure on $V$ is pulled back from the multiplication map $\mathbf A^2 \to \mathbf A^1$ of toric varieties along a morphism $t : S \to \mathbf A^1$ of logarithmic schemes.
\end{itemize}

The image of $t\in M_S$ in $\overline M_S$ is called the \textbf{deformation parameter of the node}.


\end{theorem}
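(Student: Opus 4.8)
The plan is to reduce the statement to a purely local question about logarithmically smooth morphisms and then invoke the classification of such morphisms via charts. First I would recall that, since $\pi$ is logarithmically smooth, in an étale neighborhood of $x$ there is a chart: a morphism of monoids $Q \to P$ (with $Q$ a chart for $M_S$ near $\pi(x)$ and $P$ a chart for $M_C$ near $x$) such that $C$ maps strictly to $S \times_{\Spec \ZZ[Q]} \Spec \ZZ[P]$ étale-locally, and the map $\ZZ[Q] \to \ZZ[P]$ is étale after inverting the torsion in the cokernel of $Q^{\mathrm{gp}} \to P^{\mathrm{gp}}$. The integrality and saturatedness hypotheses on $\pi$ translate into: $Q \to P$ is an integral and saturated morphism of fine saturated monoids. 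The fibers being $1$-dimensional forces $\dim P^{\mathrm{gp}} \le \dim Q^{\mathrm{gp}} + 1$, so the relative dimension is at most $1$.

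The key step is the monoid classification. Working at the geometric point $x$, I may assume $Q = \overline M_{S,\pi(x)}$ and analyze the possible $P = \overline M_{C,x}$ together with the integral, saturated, local homomorphism $Q \to P$ of relative rank $\le 1$. There are exactly three cases. (i) $P = Q$, equivalently $P^{\mathrm{gp}} = Q^{\mathrm{gp}}$: then the logarithmic structure on $C$ is pulled back from $S$, and logarithmic smoothness of relative dimension one means $C \to S$ is smooth of relative dimension one with trivial relative log structure, giving the smooth germ $\A^1_S$. (ii) $P = Q \oplus \NN$: then the extra $\NN$ contributes a marked point, the coordinate being a section of the line bundle cut out by the new generator, and one gets $\A^1_S$ with the toric log structure on the $\A^1$ factor. (iii) $P$ is the pushout $Q \oplus_{\NN} \NN^2$ along a map $\NN \to Q$ sending the generator to some $\delta \in Q$ (equivalently $P = \{(a,b) \in Q^{\mathrm{gp}} \oplus \ZZ^2_{\ge 0} \text{-ish glued data}\}$): this is exactly the monoid whose $\Spec \ZZ[\,\cdot\,]$ is the family of nodal curves $\Spec \ZZ[Q][x,y]/(xy - t_\delta)$, where $t_\delta$ is the monomial corresponding to $\delta$; here $\delta \in \overline M_S$ is the deformation parameter of the node. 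That these are the only options follows because an integral saturated local extension of a fine saturated monoid by relative rank at most one is, up to isomorphism, one of these three — this is the combinatorial heart, and it is where the integrality and saturatedness are genuinely used (without them one would get non-reduced or non-flat fibers, and extra pathological monoids would appear).

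Once the monoid side is pinned down, I would upgrade from the chart to the stated étale-local models: in each case the chart presents $C$ strictly over the corresponding toric model $V \to S$ — $\A^1_S$ pulled back, $\A^1_S$ with toric structure, or $\Spec \mathscr O_S[x,y]/(xy=t)$ with log structure pulled back along $t: S \to \A^1$ — and logarithmic smoothness of relative dimension $\le 1$ plus strictness forces the induced map on underlying schemes to be étale onto the model, completing the proof. The identification of the image of $t$ in $\overline M_S$ with the deformation parameter is then a matter of unwinding the chart. I expect the main obstacle to be precisely the monoid classification in step (iii): verifying that integrality and saturatedness rule out every alternative and that the pushout $Q \oplus_\NN \NN^2$ is the unique possibility requires care, and this is where I would either reproduce F. Kato's argument or, following the paper's stated intent, simply cite~\cite{Kat00}.
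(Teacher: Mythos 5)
The paper does not prove this statement at all: it is F.~Kato's structure theorem, quoted with the citation to \cite{Kat00}, and your proposal ultimately defers to that same citation. Your sketch (charts for log smooth morphisms, classification of the integral saturated rank-$\le 1$ extensions of characteristic monoids into the three types, then upgrading to the strict \'etale-local toric models) is a reasonable outline of Kato's argument, so this is essentially the same approach as the paper.
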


Associated to a logarithmic curve $C\to S$ is a family of tropical curves.  As the construction is simpler when the underlying scheme of $S$ is the spectrum of an algebraically closed field, and we will only need it in that case, we make that assumption in order to describe it.  Under this assumption, for each edge $e$ of the dual graph of $C$, we write $\delta_e$ for the deformation parameter of the corresponding node of $C$.  The following definition is given implicitly by Gross and Siebert~\cite[Section 1]{GS13}:

\begin{definition}\label{def:tropicalization}
Let $C$ be a logarithmic curve over $S$, where the underlying scheme of $S$ is the spectrum of an algebraically closed field.  The \textbf{tropicalization} of $C$ is the dual graph $\plC$ of $C$, with vertices weighted by the genera of the corresponding components of $C$, and with the length of an edge $e$ defined to be the smoothing parameter $\delta_e \in \overline M_S$.
\end{definition}




\subsection{Line bundles from piecewise linear functions} \label{sec:pwl} 

It is shown in \cite[Remark~7.3]{CCUW} that, if $C$ is a logarithmic curve over $S$, and the underlying scheme of $S$ is the spectrum of an algebraically closed field, then sections of $\overline M_C$ may be interpreted as piecewise linear functions on the tropicalization of $C$ that are valued in $\overline M_S$ and are linear along the edges with integer slopes.

For any logarithmic scheme $X$ and any section $\alpha \in \Gamma(X, \overline M_X^{\rm gp})$, the image of $\alpha$ under the coboundary map
\begin{equation*}
H^0(X, \overline M_X^{\rm gp}) \to H^1(X, \mathscr O_X^\star)
\end{equation*}
induced from the short exact sequence
\begin{equation*}
0 \to \mathscr O_X^\star \to M_X^{\rm gp} \to \overline M_X^{\rm gp} \to 0
\end{equation*}
represents an $\mathscr O_X^\star$-torsor $\mathscr O_X^\star(-\alpha)$ on $X$.  Via the equivalence between $\mathscr O_X^\star$-torsors and line bundles, this corresponds to a line bundle, $\mathscr O_X(-\alpha)$. To each piecewise linear function $f$ on $\plC$ that is linear on the edges with integer slopes and takes values in $\overline M_S$, we have an associated section of $\overline M_C$ and therefore an associated line bundle $\mathscr O(-f)$.

The monoid $\overline M_X \subset \overline M_X^{\rm gp}$ gives $\overline M_X^{\rm gp}$ a partial order in which $f \leq g$ when $g - f \in \overline M_X$.  If $f \geq 0$, meaning that $f$ is a section of $\overline M_X$, then we can restrict $\varepsilon : M_X \to \mathscr O_X$ to give a homomorphism $\mathscr O_X(-f) \to \mathscr O_X$.  More generally, if $f \leq g$ then $g - f \geq 0$ and we get $\mathscr O_X(f - g) \to \mathscr O_X$, hence $\mathscr O_X(f) \to \mathscr O_X(g)$.

A logarithmic structure can be defined equivalenty as a system of invertible sheaves indexed homomorphically by the sheaf of partially ordered abelian groups $\overline M_X^{\rm gp}$.  We will frequently take this point of view in the sequel. The line bundles and torsors arising from the logarithmic structure on a curve can also be described in a rather explicit fashion using chip-firing and tropical divisor theory. We refer the reader to~\cite{FRTU,Kaj} for developments in this direction. 

\begin{proposition} \label{prop:line-bundle-degrees}
Let $\pi : C \to S$ be a logarithmic curve over $S$.  Assume that $\overline M_S$ and the dual graph $\plC$ are constant over $S$ and that the smoothing parameters of the nodes are all zero in $\mathscr O_S$.  If $f$ is a piecewise linear function on $\plC$ that is linear with integer slopes on the edges and takes values in $\overline M_S$, and $C_v$ is the component of $C$ corresponding to the vertex $v$ of $\plC$, then there is a canonical identification
\begin{equation*}
\mathscr O_C(f) \big|_{C_v} = \mathscr O_{C_v}\Bigl(\sum_e \mu_e p_e\Bigr) \otimes \pi^\star \mathscr O_S(f(v))
\end{equation*}
where the sum is taken over flags $e$ of $\plC$ rooted at $v$, the integer $\mu_e$ is the outgoing slope of $f$ along the edge $e$, and $p_e$ is the point of $C_v$ corresponding to $e$.
\end{proposition}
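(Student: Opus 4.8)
The plan is to verify the claimed identification \'etale-locally on the component $C_v$ and then glue. Fix the vertex $v$. Because the dual graph $\plC$ is constant over $S$ and the smoothing parameters $\delta_e$ all vanish in $\mathscr O_S$, the curve $C$ carries an honest node $p_e \in C_v$ for each flag $e$ of $\plC$ rooted at $v$, and I would cover $C_v$ by the open subscheme $U_v = C_v \setminus \{p_e\}_e$ together with a small \'etale neighborhood $W_e$ of each $p_e$ in $C$. On $W_e$ I would use F.~Kato's local model \cite{Kat00}: $W_e$ is strict over $\mathscr O_S[x_e, y_e]/(x_e y_e)$, with $C_v$ the branch $\{y_e = 0\}$, with $x_e$ restricting to a nonzerodivisor on $C_v$ that vanishes simply at $p_e$, and with $\overline M_C$ near $p_e$ generated over $\pi^\star \overline M_S$ by the classes $\bar x_e$ of $x_e$ and $\bar y_e$ of $y_e$, subject to the relation $\bar x_e + \bar y_e = \delta_e$.

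Two local computations should then suffice. Over $U_v$, the tropicalization of the part of $C$ lying over $U_v$ is the single vertex $v$, so $\overline M_C \big|_{U_v} = \pi^\star \overline M_S$ and $f$ restricts to the constant section $f(v)$; hence $\mathscr O_C(f)\big|_{U_v} = \pi^\star \mathscr O_S(f(v))\big|_{U_v}$, while $\mathscr O_{C_v}\bigl(\sum_e \mu_e p_e\bigr)\big|_{U_v}$ is canonically trivial. Over $W_e$, the identification of sections of $\overline M_C$ with piecewise linear functions \cite[Remark~7.3]{CCUW} presents $f\big|_{W_e}$ as the affine function on the segment $[v,v']$ with value $f(v)$ at $v$ and outgoing slope $\mu_e$ at $v$; in the stalk $\overline M_{C,p_e}^{\rm gp}$, which coincides with $\Gamma(W_e, \overline M_C^{\rm gp})$ for $W_e$ small, this section equals $f(v) + \mu_e \bar x_e$, as one checks by restricting to the two branches (using $\bar x_e \mapsto 0$ on $C_v$ and $\bar x_e \mapsto \delta_e$ on the other branch, so that the values $f(v)$ and $f(v)+\mu_e\delta_e = f(v')$ are recovered). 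By additivity of the construction $\alpha \mapsto \mathscr O_C(\alpha)$ this gives $\mathscr O_C(f)\big|_{W_e} = \pi^\star \mathscr O_S(f(v))\big|_{W_e} \otimes \mathscr O_C(\mu_e \bar x_e)\big|_{W_e}$. Restricting the last factor to $C_v$, the class $\bar x_e$ lifts to the honest function $x_e\big|_{C_v}$, a nonzerodivisor vanishing simply at $p_e$, so the structure homomorphism $\mathscr O_C(-\bar x_e)\big|_{C_v} \to \mathscr O_{C_v}$ is injective with image the ideal of $p_e$; hence $\mathscr O_C(\mu_e \bar x_e)\big|_{C_v} = \mathscr O_{C_v}(\mu_e p_e)$.

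It then remains to glue. Over $U_v \cap W_e$ the coordinate $x_e$ is a unit, so $\bar x_e = 0$ there and the isomorphism just constructed degenerates to the canonical trivialization, matching the trivialization coming from $U_v$; patching over the cover $\{U_v\} \cup \{W_e\}_e$ should yield the asserted canonical identification $\mathscr O_C(f)\big|_{C_v} = \mathscr O_{C_v}\bigl(\sum_e \mu_e p_e\bigr) \otimes \pi^\star \mathscr O_S(f(v))\big|_{C_v}$. I expect the only delicate point to be the node-local step of the second paragraph: correctly matching the piecewise linear function with $f(v) + \mu_e \bar x_e$ (getting the sign and the choice of branch generator right) and confirming that the $U_v$- and $W_e$-trivializations agree on overlaps. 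Once that bookkeeping is settled the rest is formal, using only additivity and functoriality of $\alpha \mapsto \mathscr O(\alpha)$ under pullback and restriction, together with the hypothesis that the $\delta_e$ vanish in $\mathscr O_S$, which is exactly what makes the $p_e$ smooth points of $C_v$.
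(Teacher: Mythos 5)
Your proof is correct and follows essentially the same route as the paper's: reduce by additivity to the constant part $f(v)$, use the canonical trivialization on the interior of $C_v$ where the remaining section of $\overline M_C$ vanishes, and at each node write the section locally as $f(v)+\mu_e\bar x_e$ and apply the structure map $\varepsilon$ to the branch generator to identify $\mathscr O_C(\mu_e\bar x_e)\big|_{C_v}$ canonically with $\mathscr O_{C_v}(\mu_e p_e)$, checking compatibility with the interior trivialization on overlaps. The only omission — treating only node flags explicitly, while flags coming from marked legs require the identical (simpler) local computation — is shared with the paper's own proof.
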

\begin{proof}
If $f$ is a constant function then the statement is obvious, and both sides of the equality are additive functions of $f$, so we may subtract the constant function with value $f(v)$ from $f$ and assume that $f(v) = 0$.  Let $C_v^\circ$ be the interior of $C_v$.  As $f$, viewed as a section of $\overline M_C$, takes the constant value $0$ on $C_v^\circ$, there is a canonical trivialization of $\mathscr O_C(-f)$ on $C_v^\circ$.

Consider an edge $e$ of $\plC$ that is incident to $v$.  This corresponds to a node $p$ of $C$ that lies on $C_v$ with local coordinates $\alpha + \beta = \delta$, with $\alpha, \beta \in \overline M_{C,p}$ and $\delta \in \overline M_S$.  Let $\tilde\alpha$ and $\tilde\beta$ be lifts of $\alpha$ and $\beta$ to $M_{C,p}$.  Either $\varepsilon(\tilde\alpha)$ or $\varepsilon(\tilde\beta)$ restricts to a local parameter $C_v$ at $p$, so we assume without loss of generality that it is $\varepsilon(\tilde\alpha)$.  

If the slope of $f$ along $e$ is $m$ then $f$ corresponds locally to $m \alpha$.  We assume first that $m \geq 0$.  Then $\varepsilon$ restricts on a neighborhood $U$ of $p$ in $C_v$ to give
\begin{equation*}
\varepsilon \big|_{U} : \mathscr O_U(-f) \to \mathscr O_{U}
\end{equation*}
whose image is the ideal generated by $x^m$.  This gives a \emph{canonical} isomorphism between $\mathscr O_U(-f)$ and $\mathscr O_U(-mp)$ in a neighborhood $U$ of $p$ that restricts on the complement of $p$ to the trivialization described above. If $m < 0$ then $-m \geq 0$ and we obtain a canonical isomorphism $\mathscr O_U(f) \simeq \mathscr O_U(-mp)$ in a neighborhood $U$ of $p$, as above.  This completes the proof.
\end{proof}

\subsection{Logarithmic blowups and logarithmic modifications}
\label{sec:log-blowup}

Let $X$ be a logarithmic scheme and let $I \subset \overline M_X$ be a coherent ideal, by which we mean that $I$ is a subsheaf of $\overline M_X$ such that $\overline M_X + I = I$ and locally $I$ is generated by global sections of $\overline M_X$ (see \cite[Definition~3.6]{Kato-LogMod}).  We say $I$ is principal if it is possible to find a section $\alpha$ of $\overline M_X$ such that $I = \alpha + \overline M_X$.  Note that this is actually a local condition, as $\alpha$ is unique if it exists because $\overline M_X$ is sharp.

Given any ideal $I \subset \overline M_X$, and a logarithmic scheme $S$, we define $F(S)$ to be the set of logarithmic maps $f : S \to X$ such that $f^\star I$ is principal.

Suppose that $I$ is generated by sections $\alpha_j$.  Then $F(S)$ is, equivalently, the set of logarithmic maps $f : S \to X$ such that the collection $\{ f^\star (\alpha_j) \}$  of sections of $\overline M_S$ has a minimal element with respect to the partial order introduced in Section~\ref{sec:pwl}.  This interpretation will be useful when we relate the Vakil--Zinger blowup construction to our own in Section~\ref{sec: vz-construction}.

\begin{proposition}
The functor $F$ is representable by a logarithmic scheme, called the logarithmic blowup of $I$.
\end{proposition}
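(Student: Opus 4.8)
The plan is to reduce the statement to the known representability of toric modifications via the chart construction for logarithmic blowups, following Kato's treatment in \cite{Kato-LogMod}. First I would observe that the question is \'etale-local on $X$: the functor $F$ is defined by a local condition (principality of $f^\star I$), and if $F$ is representable after an \'etale cover of $X$ then the representing objects glue, because the pullback of a principal ideal is principal, so the descent data on the covers is canonical. Thus I may assume $X$ admits a global chart $P \to M_X$ by a fine saturated monoid $P$, and that $I$ is generated by finitely many global sections $\alpha_1, \dots, \alpha_k$ of $\overline M_X$, each lifting to an element $p_j \in P$.

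Next I would reduce to the universal case $X = \Spec(k[P])$ with its natural toric logarithmic structure. A chart $P \to M_X$ gives a strict morphism $X \to \Spec(k[P]) =: X_P$, and one checks directly from the functor of points that $F_X = F_{X_P} \times_{X_P} X$: a logarithmic map $f : S \to X$ is the same as a logarithmic map $S \to X_P$ together with a strict lift, and principality of $f^\star I$ only depends on the composite $S \to X_P$. So it suffices to represent $F$ when $X = X_P$ and $I = (p_1, \dots, p_k)$.

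For the toric case I would invoke the classical construction: the ideal $(p_1, \dots, p_k)$ determines a subdivision $\Sigma$ of the cone $\sigma = \Hom(P, \RR_{\ge 0})$ (the dual cone of $P$), namely the coarsest subdivision on which the piecewise linear function $\min_j \langle \cdot, p_j \rangle$ becomes linear on each cell; explicitly, the maximal cones of $\Sigma$ are the loci where a fixed $p_j$ achieves the minimum. The associated toric variety $X_\Sigma$ maps to $X_P$, is covered by the affine charts $\Spec k[P_j]$ where $P_j$ is the submonoid of $P^{\rm gp}$ generated by $P$ and the differences $p_i - p_j$, and by construction $f^\star I$ is principal (generated by $f^\star \alpha_j$) precisely on the locus where $f$ factors through $\Spec k[P_j]$. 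The universal property of the glued toric scheme $X_\Sigma$, together with the observation that for a logarithmic map $f : S \to X_P$, the pulled-back ideal is principal on $S$ if and only if $S$ is covered by opens on which one of the $f^\star\alpha_j$ is minimal, gives exactly $F_{X_P}(S) = \Hom_{\LogStr}(S, X_\Sigma)$. Then descent along the chart and gluing back over $X$ finishes the proof.

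The main obstacle is the bookkeeping in the toric step: one must verify carefully that the monoids $P_j$ are fine and saturated (so the charts $\Spec k[P_j]$ carry genuine fine saturated logarithmic structures), that the scheme obtained by gluing them along the prescribed overlaps is separated, and — most importantly — that the functor-of-points identification $F_{X_P}(S) = \Hom(S, X_\Sigma)$ holds for \emph{all} logarithmic schemes $S$, not just those with constant or monoid-valued structure. The last point is handled by reducing the principality condition to a statement about the sheaf $\overline M_S$: the collection $\{f^\star \alpha_j\}$ has a minimal element locally on $S$ if and only if $S$ is covered by opens where $f$ lifts to one of the toric charts, and this lifting is unique because it is determined by which section is minimal, which is precisely the separatedness of $X_\Sigma$ over $X_P$. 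Everything else is the standard dictionary between fans, monoids, and logarithmic toric schemes, so I would cite \cite[Section~3]{Kato-LogMod} rather than reprove it.
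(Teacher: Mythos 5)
Your proof is correct and follows essentially the same route as the paper: reduce \'etale-locally to a global chart, base-change to the toric case, and represent the functor there by the toric blowup, citing \cite{Kato-LogMod}. The only difference is that you spell out the toric step with explicit charts $\Spec k[P_j]$ (where, as you note, one should saturate the $P_j$ in the fs setting), whereas the paper simply invokes the usual blowup of the toric ideal together with Kato's more detailed construction.
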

\begin{proof}
	The assertion is local in the \'etale topology, so we can assume $X$ has a global chart, which we regard as a strict map to a toric variety $X \to V$.  Then $F$ is the base change of the moduli problem over $V$ defined by the same formula, so we can assume $X = V$ is a toric variety.  Then $I$ generates a toric ideal of $X$ and the blowup of that ideal, in the usual sense, represents $F$.  See the discussion following Definition~3.8 in \cite{Kato-LogMod} for a more detailed construction.
\end{proof}


\begin{remark}\label{rem:mono}
Let $\tilde X$ be a logarithmic blowup of $X$. It may be counterintuitive that although $\tilde X\to X$ is essentially never an injection, the functor on logarithmic schemes defined by $\tilde X$ is defined as a subfunctor of the one defined by $X$. That is, a logarithmic blowup is a non-injective monomorphism. This may be seen as a failure of the schemes $\tilde X$ and $X$ to be good topological reflections of the associated logarithmic schemes. An artifact of the monomorphicity is reflected in the fact that the map at the level of tropicalizations (cone complexes) is a set theoretic bijection. 
\end{remark}

\begin{remark}\label{rem:mono-blowup}
The monomorphicity of logarithmic blowups might be understood by comparison with the conventional universal property for blowing up in algebraic geometry \cite[Proposition~II.7.14]{Hartshorne}, which also asserts that $\Hom(S, \tilde X) \to \Hom(S, X)$ is injective for morphisms $S \to X$ that meet the blowup center sufficiently transversely.  Logarithmic geometry forces all morphisms meeting the logarithmic boundary of $X$ to ``know something about'' points of $X$ nearby the boundary, effectively making all logarithmic morphisms $S \to X$ sufficiently transverse.
\end{remark}

Of particular interest in this paper will be the logarithmic blowups that arise from ideals with $2$ global generators, $\alpha$ and $\beta$ in $\Gamma(X, \overline M_X)$.  Then the blowup $F$ constructed above is the universal $Y \to X$ such that the restrictions of $\alpha$ and $\beta$ are \emph{locally comparable}.  That is, for every geometric point $y$ of $Y$, we have either $\alpha_y \leq \beta_y$ or $\alpha_y \geq \beta_y$ in the stalk $\overline M_{Y,y}$.

\begin{definition} \label{def:log-mod}
	A morphism of logarithmic schemes $f : X \to Y$ is called a \emph{logarithmic modification} if, locally in $Y$, it is the base change of a toric modification of toric varieties.
\end{definition}

Logarithmic blowups are logarithmic modifications, but not every logarithmic modification is a logarithmic blowup, even locally, because not all toric modifications are toric blowups.  Nevertheless, every logarithmic modification can be dominated by a logarithmic blowup.  We omit an explanation of this fact, since we will not need to make any use of it. 

\begin{example} \label{ex:log-blowup}
In order to make clear how the imposition of an order between a priori unordered elements of $\overline M_X$ translates into a blowup, we work out a basic example.  We assume that $X$ is the spectrum of an algebraically closed field and that $\overline M_X = \mathbf N \alpha + \mathbf N \beta$.  Let $\tilde X$ be the universal logarithmic scheme over $X$ such that $\alpha$ and $\beta$ pull back to comparable elements. We suppress the pullback in what follows.

Of particular interest are the points of $\tilde X$ where $\alpha = \beta$.  Considering only characteristic monoids, it might seem that there is just one such point.  However, to lift from the characteristic sheaf to a logarithmic point, consideration of the logarithmic structure sheaf reveals that these points each require a choice of element in $\mathcal O_X^\ast$ to identify the torsors corresponding to $\alpha$ and $\beta$.  This is the interior of the exceptional locus of the blowup, as we now explain in more detail.

Let $Y$ be the spectrum of an algebraically closed field, with $\overline M_Y = \mathbf N$.  Consider the morphisms $Y \to \tilde X$ that send both $\alpha$ and $\beta$ to $1 \in \mathbf N$.  To produce such a map, we must give a morphism of logarithmic structures $M_X \to M_Y$, which induces (and is determined by) isomorphisms $\mathcal O_X(\alpha) \simeq \mathcal O_Y(1)$ and $\mathcal O_X(\beta) \simeq \mathcal O_Y(1)$.  The ratio of these two isomorphisms gives a well-determined element of $\mathcal O_X^\ast$, from which $Y$ and the map $Y \to \tilde X$ can be recovered up to unique isomorphism.

Put another way, to construct a logarithmic structure $M_Y$ and morphism $M_X \to M_Y$ such that $\alpha$ and $\beta$ are identified in $\overline M_Y$ requires the identification of the invertible sheaves $\mathcal O_X(\alpha)$ and $\mathcal O_X(\beta)$ and there is a $\Gm$-torsor of such identifications available to choose from.

To construct the logarithmic blowup, one may proceed by building two charts, where $\alpha \leq \beta$ and where $\alpha \geq \beta$.  We construct the former.  Take $\overline M_U$ to be the submonoid of $\overline M_X^{\rm gp}$ generated by $\overline M_X$ and by $\beta - \alpha$.  Let $M_U$ be the preimage of $\overline M_U$ in $\overline M_X^{\rm gp}$.  There is now a choice for the map $\varepsilon : M_U \to \mathcal O_X$.  The universal option is to take $\mathcal O_U = \mathcal O_X[z]$ and impose the (vacuous) relation $\varepsilon(\beta) z = \varepsilon(\alpha)$, so that $\varepsilon(\tilde\beta \tilde\alpha^{-1}) = z$ becomes well-defined (for some choice of lifts $\tilde\alpha$ and $\tilde\beta$ of $\alpha$ and $\beta$ to $M_X^{\rm gp}$.  The underlying scheme of $U$ is therefore $\mathbf A^1$.

While there is not a unique choice for $\varepsilon : M_U \to \mathcal O_X$ in the description above, there is a somewhat canonical one, in which $\varepsilon(\tilde\beta \tilde\alpha^{-1}) = 0$.  This is the locus where $\alpha < \beta$, strictly, and corresponds to the origin of the chart $U \simeq \mathbf A^1$. The other chart yields the same result, with the identification giving rise to the logarithmic blowup $\mathbf P^1$.
\end{example}

\section{Moduli spaces of genus one curves}\label{sec: target-point}

The results in this section were obtained in the doctoral dissertation of the second author~\cite{KSP-Thesis}.  Several variants of the main construction of this paper, which are either treated briefly here, or not at all, are described in greater detail in~\cite{KSP-Thesis}.

We construct a moduli space $\Mbar_{1,n}^{\mathrm{rad}}$ of pointed curves with a \textbf{radial alignment}, show that it is a blowup of $\Mbar_{1,n}$, and that the radial alignments determine contraction morphisms to the space of $m$-stable curves, as defined by Smyth~\cite{Smyth}. 

\subsection{The intuition {\it \&} strategy}\label{sec: strategy} The framework in this section may be unintuitive at first, so we provide some motivation that will become precise in later sections. For each integer $m\geq 0$, Smyth constructs proper, not necessarily smooth moduli spaces $\Mbar_{1,n}(m)$ of $m$-stable curves. Here, for each $m$, one considers the moduli problem for curves of arithmetic genus $1$ where the central genus~$1$ component has a total of more than $m$ markings and external nodes (meaning nodes where it meets the complementary subcurve).  In place of the genus~$1$ curves with $m$ or fewer branches, Smyth substitutes Gorenstein genus~$1$ singularities (Section~\ref{sec: genus-1-singularities}).  These spaces are all birational to one another, and there is a birational map identifying the loci of smooth elliptic curves with distinct markings
\[
\Mbar_{1,n}\dashrightarrow \Mbar_{1,n}(m).
\]
The main result of this section is the construction of a moduli space $\Mbar_{1,n}^{\mathrm{rad}}$ that, for any $0\leq m \leq n$ resolves the indeterminacies of the rational map above, i.e.,
\[
\begin{tikzcd}
& \Mbar_{1,n}^{\mathrm{rad}} \arrow{dl} \arrow{dr}{\phi_m}& \\
\Mbar_{1,n} \arrow[dashrightarrow]{rr} & & \Mbar_{1,n}(m).
\end{tikzcd}
\]
We construct this stack by adding information to the moduli problem of $\Mbar_{1,n}$ guided by the following observation: 

\begin{center}
\parbox{.5\textwidth}{
	\textit{An elliptic $m$-fold singularity is formed by contracting a genus $1$ component with $m$ external nodes in a smoothing family.}
}
\end{center}

For example, suppose that $C \to S$ is a $1$-parameter smoothing of a nodal curve $C_0$ with smooth total space and that $E$ is an irreducible genus~$1$ component of $C_0$.  Suppose that $\overline C$ is a flat family obtained from $C$ by contracting $E$.  If $E$ is a genus~$1$ tail then the constancy of the Hilbert polynomial in flat families forces it to be replaced in $\overline C_0$ by a genus~$1$ singularity with one branch --- a cusp.  If $E$ is a genus~$1$ bridge then, assuming $\overline C$ is Gorenstein, the replacement of $E$ will be a tacnode.  

One must take care that, if $m > 1$, then the resulting singularity will have moduli and can depend on the choice of smoothing family.  Therefore the rational map above has indeterminacy.

We mimic the contraction tropically in the following manner. The \textbf{circuit} of a tropical curve of genus $1$ is the union of the vertices whose complement contains no component of genus~$1$.  Given a tropical curve $\plC$ of genus $1$, we may consider the circle around the circuit of radius $\delta^m$, which is the smallest radius in the characteristic monoid of the base such that there are at most $m$ paths from the circuit to the circle, and strictly more than $m$ paths from the circle to infinity; see Figure~\ref{fig: circle}. Contracting the interior of the circle in a family of curves with tropicalization $\plC$ produces an $m$-stable curve.

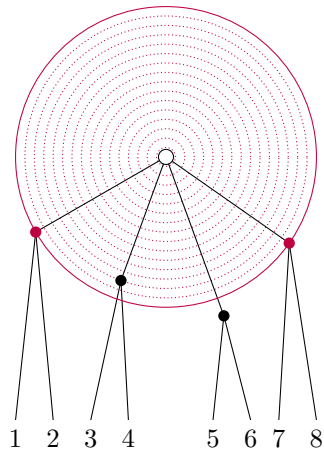
\begin{figure}[ht!]
\begin{tikzpicture}
\foreach \r in {0.125,0.25,...,1.875}
\draw [densely dotted, color=purple] (0,0) circle (\r);

\draw (0,0)--(210:2); \draw (0,0)--(325:2); \draw (0,0)--(250:1.75); \draw (0,0)--(290:2.25);
\fill [color=white] (0,0) circle (1mm);
\draw (0,0) circle (1.001mm);

\draw (210:2)--(-2,-3.5); \draw (210:2)--(-1.5,-3.5);
\draw (250:1.75)--(-1,-3.5); \draw (250:1.75)--(-0.5,-3.5);
\draw (290:2.25)--(0.625,-3.5); \draw (290:2.25)--(1.125,-3.5);
\draw (325:2)--(2,-3.5); \draw (325:2)--(1.5,-3.5);

\draw[color=purple] (0,0) circle (2);
\fill [color=purple] (210:2) circle (0.75mm);
\fill [color=purple] (325:2) circle (0.75mm);
\fill (250:1.75) circle (0.75mm);
\fill (290:2.25) circle (0.75mm);

\node at (-2,-3.75) {\small $1$};
\node at (-1.5,-3.75) {\small $2$};
\node at (-1,-3.75) {\small $3$};
\node at (-0.5,-3.75) {\small $4$};
\node at (0.625,-3.75) {\small $5$};
\node at (1.125,-3.75) {\small $6$};
\node at (2,-3.75) {\small $8$};
\node at (1.5,-3.75) {\small $7$};

\end{tikzpicture}
\caption{The circle of radius~$\delta^5$ drawn on the dual graph of a stable genus $1$ curve. The white vertex is the circuit. }\label{fig: circle}
\end{figure}

Given a family of tropical curves, which we think of as a tropical curves with edge lengths in a monoid as before, the position of a vertex need not be comparable to any chosen radius $\delta$. In other words, over one fiber of the family, a vertex may lie inside the circle and in another fiber, it may lie outside the circle. Just as not all versal deformations admit contractions, not all families of tropical curves admit well-defined radii $\delta^m$. 

In order that the tropical moduli problem of curves with a circle be well-defined in families, it is necessary to be able to compare the radius of the circle with the distance of a vertex from the minimal genus $1$ subgraph. We may refine the moduli problem of tropical curves by adding an ordering of the non-circuit vertices of the tropicalization to the data in a combinatorial type.  It follows that on a family of tropical curves with the same order type on its vertices, there is a well-defined circle whose contraction leads to an $m$-stable curve.

\begin{guidingprinciple}
The space $\Mbar_{1,n}^{\mathrm{rad}}$ is the moduli space of families of genus $1$ nodal curves together with the data of a total ordering of the vertices of their tropicalizations by distance from the circuit. For each $m<n$, this determines a unique circle whose corresponding contraction yields 	an elliptic $m$-fold curve. The map to $\Mbar_{1,n}$ forgets the ordering, while the map to $\Mbar_{1,n}(m)$ performs the contraction.
\end{guidingprinciple}

\begin{remark}
Ordering \textbf{all} of the vertices is much more information than is strictly necessary for constructing the contraction.  See Section~\ref{vz-comparison} and \cite{KSP-Thesis} for more parsimonious variants.
\end{remark}

An ordering of the non-circuit vertices of a tropical curve can be incorporated into a logarithmic moduli problem, which can in turn be realized as a blowup. 

\subsection{Smyth's moduli spaces}
Fix positive integers $m<n$ and let $C$ be a connected, reduced, proper curve with arithmetic genus $1$. Let $p_1,\ldots, p_n$ be $n$ distinct smooth marked points, and let $\Sigma = p_1 + \cdots + p_n$. 

\begin{definition}
The curve $(C,p_1,\ldots, p_n)$ is \textbf{$m$-stable} if
\begin{enumerate}
\item $C$ has only nodes and elliptic $\ell$-fold points, with $\ell\leq m$, as singularities.
\item If $E\subset C$ is any connected arithmetic genus $1$ subcurve, 
\[
|E\cap \overline{C\setminus E}|+|E\cap \{p_1,\ldots, p_n\}|>m,
\]
\item $H^0(C,\Omega_C^\vee(\Sigma)) = 0$.
\end{enumerate}

\end{definition}

The first condition is standard, and the third condition forces finiteness of the automorphism group. The second condition is required for separability of the moduli problem, as one must discard curves with small numbers of rational tails around the genus $1$ component and replace them with $m$-fold singularities. The main result of~\cite{Smyth} is the following.

\begin{theorem}
There is a proper and irreducible moduli stack $\Mbar_{1,n}(m)$ over $\spec(\mathbf Z[\frac{1}{6}])$, parametrizing $m$-stable $n$-pointed genus $1$ curves.
\end{theorem}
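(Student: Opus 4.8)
This is a theorem of Smyth \cite{Smyth}; here is how I would organize a proof.

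\textbf{Boundedness and algebraicity.} The plan is to realize $\Mbar_{1,n}(m)$ as a quotient stack, controlling the family by a canonical polarization. For an $m$-stable curve $(C,p_1,\dots,p_n)$, conditions (1)--(3) force $\omega_C(\Sigma)$ to have strictly positive degree on every irreducible component: on a rational component the vanishing in condition (3) forbids fewer than three special points, while on the minimal genus~$1$ subcurve $C_0$ (the circuit), where $\omega_{C_0}$ is trivial by Proposition~\ref{prop:dualizing-trivial}, condition~(2) supplies at least $m+1\ge 2$ marked points and external nodes, which contribute positively under the adjunction identification $\omega_C\big|_{C_0}\simeq\omega_{C_0}(\sum q_j)$. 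Hence $\omega_C(\Sigma)$ is ample by Nakai--Moishezon, and there is a fixed $k\gg 0$ for which $\omega_C^{\otimes k}(k\Sigma)$ is very ample with vanishing higher cohomology and a fixed Hilbert polynomial $P$. This exhibits every $m$-stable curve as a point of one Hilbert scheme $\mathrm{Hilb}_P(\PP^N)$ together with $n$ sections, so the family is bounded; let $H$ be the locally closed subscheme cut out by the conditions that the curve is $m$-stable and that $\mathscr O(1)\simeq\omega^{\otimes k}(k\Sigma)$. Here one must check that $m$-stability is an open condition: the singularity type is constrained by the upper semicontinuity of the $\delta$-invariant and of the number of branches together with the classification of Gorenstein genus~$1$ singularities, condition~(2) is open on the dual graph, and condition~(3) is open by semicontinuity of $h^0$. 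Then $\Mbar_{1,n}(m)=[H/\mathrm{PGL}_{N+1}]$ is an algebraic stack, Deligne--Mumford by condition~(3), of finite type over $\Spec(\mathbf Z[\tfrac{1}{6}])$; inverting $6$ is needed because the normal forms for the elliptic $m$-fold point require residue characteristic $\neq 2,3$.

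\textbf{Separatedness.} I would verify the valuative criterion. Let $R$ be a discrete valuation ring with fraction field $K$ and let $\mathcal C,\mathcal C'$ be $m$-stable curves over $R$ with an isomorphism of generic fibres. Resolving the elliptic singularity of the special fibre of $\mathcal C$ and then stabilizing produces the (unique) nodal stable model $\mathcal C^{\mathrm{st}}\to\Spec R$ of $\mathcal C_K$, and the same model is produced from $\mathcal C'$. The family $\mathcal C$ is recovered as the contraction of $\mathcal C^{\mathrm{st}}$ along a genus~$1$ subcurve $Z$ of the central fibre, and $Z$ is determined by the requirement that the contraction be $m$-stable --- combinatorially, $Z$ is the largest connected genus~$1$ subcurve carrying no marked point, meeting the rest of the fibre in at most $m$ nodes, whose contraction still satisfies (2). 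Since a contraction of a fixed closed subscheme is unique up to unique isomorphism, $\mathcal C\simeq\mathcal C'$. The indeterminacy of $\Mbar_{1,n}\dashrightarrow\Mbar_{1,n}(m)$ is not in tension with this: it reflects that a given $m$-stable \emph{central fibre} can arise from non-isomorphic one-parameter families, not that an $m$-stable family over $R$ fails to be unique.

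\textbf{Properness.} For existence it suffices, by separatedness, to extend the general point: given a smooth $n$-marked genus~$1$ curve over $K$, pass to a finite extension $R'/R$ and take its nodal stable model $\mathcal C^{\mathrm{st}}\to\Spec R'$. In the central fibre, let $Z$ be the distinguished subcurve identified above and contract it. The contraction exists because the components of $Z$ form a proper sub-configuration of a fibre of the projective surface $\mathcal C^{\mathrm{st}}\to\Spec R'$, hence have negative-definite intersection matrix (Zariski's lemma), so Artin's contraction criterion applies; the result is projective and flat over $R'$, and a local computation with the dualizing sheaf identifies the new point of the central fibre as an elliptic $\ell$-fold point with $\ell\le m$. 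That this limit is $m$-stable then follows from the choice of $Z$. I expect this step to be the principal obstacle: producing the contraction and verifying that the limiting singularity is precisely the Gorenstein genus~$1$ singularity with $\ell$ branches requires a genuinely local analysis of curve singularities, and it is here that the hypothesis $6\in R^\times$ enters.

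\textbf{Irreducibility.} Finally, the open substack $\cM_{1,n}\subset\Mbar_{1,n}(m)$ of smooth $n$-marked genus~$1$ curves is irreducible, so it remains to show it is dense, i.e.\ that every $m$-stable curve deforms to a smooth one. This follows from local smoothability: the miniversal deformation of the elliptic $\ell$-fold point has smooth genus~$1$ general fibre, and nodes smooth as well, so every $m$-stable curve lies in a family over an irreducible base whose general member is smooth. Being proper over $\Spec(\mathbf Z[\tfrac{1}{6}])$ and containing a dense irreducible open substack, $\Mbar_{1,n}(m)$ is irreducible.
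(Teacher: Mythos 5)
The paper does not prove this statement: it is recorded as Smyth's theorem and cited directly from \cite{Smyth}, so there is no argument in the paper against which to compare yours.

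Your outline traces the architecture of Smyth's original proof --- boundedness via ampleness of $\omega_C(\Sigma)$ and a Hilbert scheme quotient, separatedness via uniqueness of the contracted limit, properness via a contraction of the nodal stable model over a DVR, and irreducibility via smoothability of the elliptic $\ell$-fold point. The genuine gap, which you partly flag yourself, is in the properness step. Zariski's lemma together with Artin's contraction theorem produce a normal proper \emph{algebraic space} with $Z$ collapsed to a point, but they do not supply (a) projectivity over $\Spec R'$, (b) reducedness of the special fibre, or (c) the identification of the new point as the Gorenstein genus~$1$ singularity with $\ell$ branches --- the very properties that make the limit $m$-stable. In particular the sentence ``the result is projective and flat over $R'$'' is not a consequence of Artin's criterion. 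Smyth's route, which Proposition~\ref{contraction} of the present paper globalises, is instead to build the contraction as $\operatorname{Proj}$ of the section ring of a semi-ample twist of $\omega_{\mathcal C^{\mathrm{st}}/R'}(\Sigma)$ trivialised on $Z$; this carries projectivity and flatness by construction, and the singularity of the special fibre is then identified by a direct computation with the completed local ring (where the hypothesis $6\in R^\times$ enters). Without that, the Artin-contraction route is a gap rather than a proof.

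Two smaller points. Your characterization of $Z$ as ``the largest connected genus~$1$ subcurve carrying no marked point, meeting the rest of the fibre in at most $m$ nodes, whose contraction still satisfies (2)'' is not obviously well-posed: the correct description, which this paper encodes combinatorially, is that $Z$ is the interior of the circle of radius $\delta^m$ around the circuit, where $\delta^m$ is the smallest radius with inner valence $\leq m$ and outer valence $> m$ (Definition~\ref{def:delta_m}); recasting this as a maximality statement requires an argument you have not given. And your separatedness discussion implicitly assumes the generic fibre is smooth so that a nodal stable model exists; this handles a dense open locus, but the valuative criterion must be verified for all DVR test families, including those whose generic fibre already carries an elliptic $\ell$-fold point.
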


Note that the restriction on the base is due to the presence of unexpected automorphisms of cuspidal curves in characteristics $2$ and $3$. See the discussion in~\cite[Section~2.1]{Smyth}. 

\subsection{Radially aligned logarithmic curves} \label{sec:aligned}

The additional datum necessary to construct a contraction of a logarithmic curve of genus~$1$ to an $m$-stable curve is a \textbf{radial alignment}.  

Let $S$ be a logarithmic scheme whose underlying scheme is the spectrum of an algebraically closed field, and suppose that $\pi : C \to S$ is a logarithmic curve of genus $1$ over $S$.  Let $\plC$ be the tropicalization of $C$.  We write $\ell(e) \in \overline M_S$ for the length of an edge $e$ of $\plC$ (see Section~\ref{sec: log-trop}).  For each vertex $v$ of $\plC$, there is a unique path consisting of edges $e_1, e_2, \ldots, e_k$ from $v$ to the circuit of $\plC$.  We define
\begin{equation*}
\lambda(v) = \sum_{i=1}^k \ell(e_i) .
\end{equation*}
Then $\lambda$ is a piecewise linear function on $\plC$ with integer slopes along the edges and values in $\overline M_S$.  It therefore corresponds to a global section of $\overline M_C$.

\begin{remark}
The section $\lambda$ may be seen as a map from $C$ to the Artin fan $\mathscr A = [ \A^1 / \Gm ]$.  This map sends the circuit of $C$ to the open point of $\mathscr A$ and has contact order~$1$ along every edge and marking.  As such, it can be viewed as an orientation on the edges of the tropicalization $\plC$ of $C$ that are not contained in the cicuit, with all edges oriented away from the circuit.
\end{remark}

\begin{lemma} \label{lem:lambda-canonical}
Let $C$ be a logarithmic curve over $S$ of genus~$1$.  There is an isomorphism of line bundles $\mathscr O_C(\lambda) \simeq \omega_{C/S}(\Sigma)$, where $\omega_{C/S}$ is the relative dualizing sheaf of $C$ over $S$ and $\Sigma$ is the divisor of markings.
\end{lemma}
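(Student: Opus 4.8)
The plan is to compare the two line bundles by checking their restrictions to every component of $C$, using Proposition~\ref{prop:line-bundle-degrees} on the one hand and the explicit description of $\omega_{C/S}$ on a nodal curve on the other. We may work \'etale-locally and, since both sides are compatible with base change, reduce to the case where the underlying scheme of $S$ is $\spec k$ for an algebraically closed field and the smoothing parameters of the nodes vanish in $\mathscr O_S$ (replacing $S$ by a logarithmic point does not change $\plC$ or the combinatorics). Write $\plC$ for the tropicalization, $C_v$ for the component attached to a vertex $v$, and recall that $\lambda$ has outgoing slope $1$ along every edge of $\plC$ not contained in the circuit, slope $0$ along the circuit edges, and slope $1$ along every marking ray. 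Then Proposition~\ref{prop:line-bundle-degrees} gives
\begin{equation*}
\mathscr O_C(\lambda)\big|_{C_v} = \mathscr O_{C_v}\Bigl(\sum_{e} \mu_e\, p_e\Bigr)\otimes \pi^\star\mathscr O_S(\lambda(v)),
\end{equation*}
where $e$ runs over the flags at $v$ and $\mu_e\in\{0,1\}$ is $1$ exactly on the flags pointing away from the circuit and on the marking flags.

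The core computation is then a flag-by-flag identification of $\sum_e \mu_e p_e$ with the divisor cut out by $\omega_{C/S}(\Sigma)$ on $C_v$. Recall $\omega_{C/S}\big|_{C_v} = \omega_{C_v}(\sum_{q} q)$ where $q$ runs over \emph{all} the nodes of $C$ on $C_v$; twisting by $\Sigma$ adds the marking points on $C_v$. So it suffices to match these on each flag. A flag at $v$ is either a marking (where both sides contribute the marking point with coefficient $1$), or an edge $e$ of the circuit (where $\omega_{C/S}$ contributes $+1$ from the node and $\lambda$ contributes $\mu_e = 0$), or an edge $e$ pointing strictly away from the circuit (where $\omega_{C/S}$ contributes $+1$ and $\lambda$ contributes $\mu_e = 1$). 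Thus on $C_v$ the two divisors differ by $\sum_{e\in\text{circuit flags at }v} p_e$. The remaining point is that this discrepancy is accounted for by the relation between $\omega_{C_v}$ and $\omega_{C/S}\big|_{C_v}$ \emph{together with the restriction of $\mathscr O_C(\lambda)$}: on the circuit component(s), $\lambda$ is constant equal to $0$, and $\omega$ restricted to the circuit is the dualizing sheaf of the circuit (a smooth genus $1$ curve or a ring of rational curves) twisted only by its external flags, which is what $\lambda$ sees with its away-from-circuit slopes; by Proposition~\ref{prop:dualizing-trivial} this dualizing sheaf is trivial, matching $\mathscr O_C(\lambda)\big|_{\text{circuit}} = \mathscr O$. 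On each non-circuit component the two divisors agree on the nose after the flag bookkeeping above. Carrying this out carefully shows that $\mathscr O_C(\lambda)$ and $\omega_{C/S}(\Sigma)$ have canonically identified restrictions to every $C_v$, compatibly on overlaps.

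To upgrade the componentwise agreement to a genuine isomorphism of line bundles on $C$, I would argue as follows. Both $\mathscr O_C(\lambda)$ and $\omega_{C/S}(\Sigma)$ are line bundles whose restriction to each $C_v$ has been pinned down canonically; their difference is therefore a line bundle trivial on every component, i.e. pulled back from $\pi^\star$ of a line bundle on $S$ (this uses that $\plC$ and $\overline M_S$ are constant, so that $\operatorname{Pic}$ of $C$ is controlled by the $\operatorname{Pic}$ of the components and $\operatorname{Pic}(S)$, with gluing data along the nodes). The $\pi^\star\mathscr O_S(\lambda(v))$ factors appearing in Proposition~\ref{prop:line-bundle-degrees} are exactly the data needed to see that this $S$-line bundle is trivial: moving across a node along an edge $e$ of length $\ell(e)$, the value $\lambda$ jumps by $\ell(e)$, matching the jump recorded by the gluing of $\omega_{C/S}$ across that node (the local coordinates $xy = \delta_e$ and the residue description of $\omega$). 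Chasing these gluing isomorphisms around shows the candidate isomorphism is consistent, hence $\mathscr O_C(\lambda)\simeq \omega_{C/S}(\Sigma)$ globally. The one fact I would want to invoke without reproving is the explicit local generator of $\omega_C$ at a node and, for the circuit, the triviality of the dualizing sheaf from Proposition~\ref{prop:dualizing-trivial}; with those in hand the only real work is the flag-by-flag divisor comparison.

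I expect the main obstacle to be the final gluing step: making precise that the componentwise-canonical isomorphisms are mutually compatible across the nodes, i.e. that the cocycle obstruction vanishes. This amounts to checking that the transition functions of $\omega_{C/S}$ across a node (governed by the node's smoothing parameter $\delta_e$ and the residue pairing) match the transition functions of $\mathscr O_C(\lambda)$ (governed by the slope of $\lambda$ across $e$ and the logarithmic structure), which is essentially the content of Proposition~\ref{prop:line-bundle-degrees} read in the $\overline M_S$-graded sense — but assembling it into a clean global statement, rather than a component-local one, is where the care is needed.
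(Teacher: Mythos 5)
Your overall skeleton (restrict to components, compute $\mathscr O_C(\lambda)\big|_{C_v}$ via Proposition~\ref{prop:line-bundle-degrees}, handle the circuit with Proposition~\ref{prop:dualizing-trivial}, then glue) is the same as the paper's, but the central computation contains an error that breaks the key step as written. The slopes of $\lambda$ are not in $\{0,1\}$: at a vertex $v$ not on the circuit, the unique flag pointing \emph{toward} the circuit has outgoing slope $-1$ (since $\lambda(v) = \lambda(w) + \ell(e)$ for the adjacent vertex $w$ on the path to the circuit), so $\mu_e \in \{-1,0,1\}$. With your bookkeeping, $\mathscr O_C(\lambda)$ would have degree $n-1$ on a non-circuit component of valence $n$, while $\omega_{C/S}(\Sigma)$ has degree $n-2$, so the claimed matching fails outright. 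Moreover, even after correcting the slope, your assertion that the two divisors ``agree on the nose'' on non-circuit components is false: the difference is $\omega_{C_v}(2p_e)$, with $p_e$ the node toward the circuit, which is a degree-zero class that is trivial only because these components are rational. So the honest conclusion on each non-circuit component is an equality of \emph{degrees}, not of divisors, and that degree count ($-1+(n-1)=n-2$ versus $\deg\omega_{C/S}(\Sigma)\big|_{C_v}=n-2$) is precisely the paper's one-line argument.

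Relatedly, the elaborate cocycle/gluing discussion in your last two paragraphs is not needed and is not how the paper proceeds: once one knows the difference bundle $\mathscr O_C(\lambda)\otimes\omega_{C/S}(\Sigma)^{-1}$ is trivial on the circuit subcurve (Proposition~\ref{prop:dualizing-trivial}, since $\lambda\equiv 0$ there) and has degree zero on every rational component off the circuit, triviality follows because those components are attached to the circuit in trees, so there is no gluing (monodromy) obstruction. Finally, be careful with your opening reduction: checking the isomorphism fiber by fiber over geometric points of $S$ only shows the two bundles differ by a pullback from $S$; if you want the statement over a general base you need either a canonically defined map between the two sheaves or an argument of this seesaw type, a point your appeal to ``compatibility with base change'' glosses over.
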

\begin{proof}
	Let $C_0$ be the open subcurve of $C$ corresponding to the circuit $\plC_0$ of the tropicalization $\plC$ of $C$.  As $\lambda$ takes the value $0$ on $\plC_0$, the line bundle $\mathscr O(\lambda)$ is trivial on $C_0$.  As $\omega_\pi(\Sigma)$ is also trivial on $C_0$ by Proposition~\ref{prop:dualizing-trivial}, we can now show $\mathscr O(-\lambda)$ and $\omega_\pi(\Sigma)$ agree by comparing their degrees on the rational components of $C$ not in the circuit.


If $v$ is not a vertex of the circuit, then $\lambda$ has slope $-1$ on exactly $1$ edge meeting $v$ and has slope $1$ on all remaining edges.  Therefore $\mathscr O(\lambda)$ has degree $-1 + (n-1) = n-2$, where $n$ is the valence of $v$, which coincides with the degree of $\omega_\pi(\Sigma)$.
\end{proof}

Now suppose that $S$ is a logarithmic scheme.  Let $P = \pi_\star \overline M_C$.  The construction of the previous paragraph gives $\lambda_s \in P_s$ for each geometric point $s$ of $S$.  Note that $P_s = \pi_\star \overline M_{C_s}$ by proper base change for \'etale sheaves~\cite[Th\'eor\`eme~5.1~(i)]{SGA4-XII}.  We prove that these $\lambda_s$ are compatible and glue to a canonical global section in $\Gamma(S, \pi_\star \overline M_C) = \Gamma(C, \overline M_C)$.  

To check the compatibility of the $\lambda_s$, we must show they are stable under the generization map 
\begin{equation*}
P_s \to P_t
\end{equation*}
associated to a geometric specialization $t \leadsto s$.  In fact, this is immediate from the fact that $t \leadsto s$ induces an edge contraction $\plC_s \to \plC_t$ compatible with the morphism $\overline M_{S,s} \to \overline M_{S,t}$.

Returning to the case where the underlying scheme of $S$ is the spectrum of an algebraically closed field, we observe that the section $\lambda$ has a basic ordering property:  if $v$ and $w$ are vertices of $\plC_s$ such that the path from $v$ to the circuit passes through $w$ then $\lambda(v) \geq \lambda(w)$ (recall from Section~\ref{sec:pwl} that we think of sharp monoids as partially ordered abelian groups).  However, in general $\lambda(v)$ and $\lambda(w)$ are not comparable when $v$ and $w$ are arbitrary vertices of $\plC_s$.

\begin{definition}
We say that a logarithmic curve over a logarithmic scheme $S$ is \textbf{radially aligned} if $\lambda(v)$ and $\lambda(w)$ are comparable for all geometric points $s$ of $S$ and all vertices $v, w \in \plC_s$.

We write ${\mathfrak M}_{1,n}^{\mathrm{rad}}$ for the category fibered in groupoids over logarithmic schemes whose fiber over $S$ is the groupoid of radially aligned logarithmic curves over $S$ having arithmetic genus~$1$ and~$n$ marked points.
\end{definition}

The imposition of an order between vertices $v$ and $w$ of $\plC$ corresponds to requiring compatibility among the elements $\lambda(v)$ and $\lambda(w)$ of $\overline M_S$.  This effects a logarithmic modification of $S$, as described in Section~\ref{sec:log-blowup} and Example~\ref{ex:log-blowup} in particular.

Note that the notion of radial alignments, as well as variants which follow later in the paper, are distinct from the alignment condition introduced by Holmes in work on the N\'eron models~\cite{Hol14}.  It is related to the notion of aligned logarithmic structure introduced by Abramovich, Cadman, Fantechi, and the third author~\cite{ACFW}.

\begin{proposition} \label{prop:modification}
	$\fM_{1,n}^{\mathrm{rad}}$ is a logarithmic modification of the stack $\mathfrak M_{1,n}^{\log}$ of proper, connected, $n$-marked, genus~$1$, logarithmic curves.
\end{proposition}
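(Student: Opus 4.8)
The plan is to verify the assertion étale-locally on $\mathfrak M_{1,n}^{\log}$, where a toric chart becomes available and radial alignment turns into a manifestly toric condition on it. Since being a logarithmic modification may be checked locally on the target and after a smooth cover, I would first replace $\mathfrak M_{1,n}^{\log}$ by a logarithmic scheme $U$ carrying a strict smooth morphism to it, fix a geometric point $x$ of $U$ lying over a point $[C_0 \to \spec k]$, and let $e_1,\dots,e_s$ be the nodes of $C_0$ --- equivalently, the edges of the tropicalization $\plC$ of $C_0$. Using that $\mathfrak M_{1,n}$ is smooth with boundary a normal crossings divisor whose $s$ branches through $x$ record the persistence of the individual nodes $e_i$, and shrinking $U$ étale-locally around $x$, one obtains a strict morphism $U \to \A^s$ to affine space with its toric logarithmic structure, carrying $x$ to the origin, under which the smoothing parameter of $e_i$ is the pullback of the coordinate monomial $x_i \in \overline M_{\A^s} = \NN^s$. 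This local picture underlies the tropical moduli constructions of~\cite{CCUW} and ultimately rests on F. Kato's local structure theorem~\cite{Kat00}.

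Next I would make $\lambda$ explicit in this chart. Deleting the circuit from $\plC$ leaves a disjoint union of trees attached to it, so each non-circuit vertex $v$ is joined to the circuit by a unique reduced path along edges $e_{i_1},\dots,e_{i_k}$, and by construction $\lambda(v) = \ell(e_{i_1}) + \cdots + \ell(e_{i_k})$, while $\lambda(w)=0$ for $w$ in the circuit. Hence, after pulling back to $U$, each $\lambda(v)$ is the monomial $x_{i_1}+\cdots+x_{i_k}$ on $\A^s$, and a logarithmic curve over a scheme $T \to U$ is radially aligned precisely when the pullbacks to $T$ of the finite family $\{\lambda(v)\}_{v\in V(\plC)}$ of monomials on $\A^s$ are pairwise comparable at every geometric point --- a condition on the composite $T \to U \to \A^s$ alone. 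In other words, over $U$ the stack $\fM_{1,n}^{\mathrm{rad}}$ is the base change along $U \to \A^s$ of the functor $F$ on logarithmic schemes over $\A^s$ classifying maps along which all the $\lambda(v)$ become comparable.

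It then remains to identify $F$ with a toric modification of $\A^s$. Making a single pair $\lambda(v),\lambda(w)$ comparable is the logarithmic blowup of the ideal $(\lambda(v),\lambda(w)) \subset \overline M_{\A^s}$, a toric modification by the discussion of Section~\ref{sec:log-blowup}; making all pairs comparable is the fibered product over $\A^s$ of these finitely many toric modifications, hence again a toric modification. Concretely, $F$ should be represented by the toric modification $\widetilde{\A^s} \to \A^s$ attached to the coarsest subdivision of the cone $\sigma = \RR_{\geq 0}^s$ on which every linear functional $\lambda(v)-\lambda(w)$ is sign-definite, namely the subdivision of $\sigma$ induced by the hyperplane arrangement $\{\,\lambda(v)=\lambda(w)\,\}$; that $\widetilde{\A^s}$ represents $F$ follows from the universal property of logarithmic blowups, since the cones of this subdivision are exactly those over which the $\lambda(v)$ are totally preordered (cf.\ Example~\ref{ex:log-blowup}). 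Therefore $\fM_{1,n}^{\mathrm{rad}} \times_{\mathfrak M_{1,n}^{\log}} U \cong U \times_{\A^s} \widetilde{\A^s}$ is, locally on $\mathfrak M_{1,n}^{\log}$, the base change of a toric modification; as $x$ was arbitrary this gives the proposition, and in particular shows $\fM_{1,n}^{\mathrm{rad}} \to \mathfrak M_{1,n}^{\log}$ is representable.

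The only genuinely non-formal ingredient is the local toric model for $\mathfrak M_{1,n}^{\log}$ near $x$ together with the identification of each $\lambda(v)$ with an explicit coordinate monomial there, which is where the deformation theory of nodal curves enters; everything after that is the observation that radial alignment is a toric condition plus the elementary fact that finitely many pairwise-comparability constraints among monomials are resolved simultaneously by a single fan subdivision. I would expect the care to be needed mostly in the bookkeeping of that subdivision --- checking that the sign chambers of the functionals $\lambda(v)-\lambda(w)$ assemble into a genuine fan refining $\sigma$, and that the resulting toric modification carries the asserted universal property --- although all of this is standard.
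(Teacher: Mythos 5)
Your argument follows the same route as the paper's: reduce to a strict toric chart, observe that each $\lambda(v)$ is a monomial (a linear function on the dual cone), and identify the radial-alignment constraint with the subdivision of that cone by the hyperplanes $\lambda(v) = \lambda(w)$, so that $\fM_{1,n}^{\mathrm{rad}}$ is locally a base change of the associated toric modification. The only cosmetic difference is that you pin down the chart as $\A^s$ with $\overline M = \NN^s$ via the normal-crossings structure of $\mathfrak M_{1,n}$, whereas the paper phrases it for an arbitrary chart by a monoid $P$; these come to the same thing since the minimal characteristic monoid of $\mathfrak M_{1,n}$ is free.
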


\begin{proof}
This is a local assertion on $\mathfrak M_{1,n}^{\log}$.  It is therefore sufficient to show that for a smooth cover $\fM_{1,n}^{\mathrm{log}}$ by $S$, the base change
\begin{equation*}
S \mathop\times_{\mathfrak M_{1,n}^{\log}} \fM_{1,n}^{\mathrm{rad}} \to S
\end{equation*}
is a logarithmic modification.  We can therefore assume that $\overline M_S$ admits a global chart by a monoid $P$, and that, writing $C$ for the family of logarithmic curves over $S$ classified by the map to $\mathfrak M_{1,n}^{\log}$, the tropicalization $\plC$ of $C$ is induced from a tropical curve metrized by $P$.  In other words, $\plC$ is pulled back from $V = \Spec \ZZ[P]$, as is the function $\lambda$.

	Let $\sigma$ be the rational polyhedral cone dual to $P$.  For each vertex $v \in \plC$, the element $\lambda(v) \in P$ corresponds to a linear function on $\sigma$.  Let $\Sigma$ be the fan obtained by subdividing $\sigma$ along the hyperplanes where $\lambda(v) = \lambda(w)$, as $v$ and $w$ range among vertices of $\plC$, and let $W$ be the associated toric variety.  Then $\Sigma \to \sigma$ is the universal morphism of fans such that the linear functions $\lambda(v)$ on $\sigma$ become pairwise comparable on the cones of $\Sigma$.  The base change of $W$ along $S \to V$ is therefore the universal logarithmic scheme mapping to $S$ in which the sections $\lambda(v)$ of $\overline M_S$ become pairwise locally comparable.  Since this is precisely the condition for a family of logarithmic curves to lie in $\mathfrak M_{1,n}^{\mathrm{rad}}$, we may now recognize that
\begin{equation*}
S \mathop\times_{\mathfrak M_{1,n}^{\log}} \fM_{1,n}^{\mathrm{rad}} \simeq S \mathop\times_V W
\end{equation*}
and therefore that it is a logarithmic modification of $S$.
\end{proof}

\begin{corollary} \label{cor:algebraic}
	$\fM_{1,n}^{\mathrm{rad}}$ is representable by a logarithmically smooth algebraic stack.
\end{corollary}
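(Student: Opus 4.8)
The plan is to bootstrap from Proposition~\ref{prop:modification}, which exhibits $\fM_{1,n}^{\mathrm{rad}}$ as a logarithmic modification of $\mathfrak M_{1,n}^{\log}$, together with the standard fact that $\mathfrak M_{1,n}^{\log}$ is itself a logarithmically smooth algebraic stack. I would take the latter as known input: its underlying stack is the stack of prestable $n$-pointed genus~$1$ curves, smooth over $\spec \ZZ$, and the canonical logarithmic structure assembled from the node-smoothing parameters --- whose \'etale-local shape is pinned down by F.~Kato's structure theorem \cite{Kat00} --- renders it logarithmically smooth.

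First I would record that every logarithmic modification $f \colon X \to Y$ in the sense of Definition~\ref{def:log-mod} is a representable, indeed schematic, proper morphism. This may be checked locally on $Y$, where by definition $f$ is the strict base change of a toric modification $V' \to V$ of toric varieties; toric modifications are schematic and proper, and both properties are stable under base change. Applying this to $f \colon \fM_{1,n}^{\mathrm{rad}} \to \mathfrak M_{1,n}^{\log}$, algebraicity of the target propagates to algebraicity of the source, and the logarithmic structure on $\fM_{1,n}^{\mathrm{rad}}$ is the evident one coming from the local toric models, so that the category fibered in groupoids over logarithmic schemes is precisely the one attached to this logarithmic algebraic stack (this is the content of ``representable by'' in the statement; compare the discussion following Definition~3.8 in \cite{Kato-LogMod}).

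Next I would observe that a logarithmic modification is not merely logarithmically smooth but logarithmically \'etale: locally it is the base change of a subdivision of fans, and a subdivision induces a logarithmically \'etale morphism of toric varieties, because the induced map on characteristic groups is an isomorphism on each cone, whence Kato's criterion for logarithmic smoothness applies with zero relative dimension. Consequently $f$ is logarithmically smooth, and composing with the logarithmically smooth structure morphism of $\mathfrak M_{1,n}^{\log}$ yields logarithmic smoothness of $\fM_{1,n}^{\mathrm{rad}}$, as required.

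The argument has no genuine obstacle --- it is an assembly of standard inputs --- but if one wishes to isolate the point demanding care, it is the verification that logarithmic modifications of algebraic stacks are representable morphisms and that logarithmic smoothness is stable under composition, both of which reduce cleanly to the corresponding toric statements via the local models of Definition~\ref{def:log-mod}.
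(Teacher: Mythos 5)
Your proposal is correct and follows essentially the same route as the paper: it invokes Proposition~\ref{prop:modification} to realize $\fM_{1,n}^{\mathrm{rad}}$ as a logarithmic modification of $\mathfrak M_{1,n}^{\log}$, notes that such modifications are representable and logarithmically \'etale via the local toric models, and deduces logarithmic smoothness by composition. The paper's own proof is just a terser version of this, leaving the representability and log-\'etaleness of logarithmic modifications implicit in the proof of Proposition~\ref{prop:modification} and Section~\ref{sec:log-blowup}, so your extra detail is harmless elaboration rather than a different argument.
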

\begin{proof}
	It is a logarithmic modification of (and in particular logarithmically \'etale over) the logarithmically smooth stack $\mathfrak M_{1,n}^{\log}$, so it is certainly logarithmically smooth.
\end{proof}

\subsection{The minimal logarithmic structure}

Suppose that $S$ is a logarithmic scheme whose underlying scheme $\underline S$ is the spectrum of an algebraically closed field, and that we are given a radially aligned logarithmic curve $C$ over $S$, classified by a morphism $\varphi : S \to \fM_{1,n}^{\mathrm{rad}}$.  By virtue of the representability of $\fM_{1,n}^{\mathrm{rad}}$, the logarithmic structure of $\fM_{1,n}^{\mathrm{rad}}$ pulls back to a logarithmic structure $M$ on $S$, equipped with a morphism of logarithmic structures $M \to M_S$.  The objective of this section is to describe $M$ explicitly.

It will help to recognize that $M$ represents a functor on the category $\LogStr(\underline S) / M_S$, which is equivalent to $\Mon / \overline M_S$, where $\Mon$ is the category of sharp, integral, saturated monoids with sharp homomorphisms, where a sharp homomorphism is one in which every invertible element has a unique preimage (for sharp monoids, this is equivalent to a local homomorphism).  The functor in question is 
\begin{equation*}
F(N) = \fM_{1,n}^{\mathrm{rad}}(\underline S, N) \mathop\times_{\fM_{1,n}^{\mathrm{rad}}(\underline S, M_S)} \bigl\{ [C] \bigr\} ,
\end{equation*}
where $N$ lies in $\Mon / \overline M_S$. In other words, $F(N)$ is the set of radially aligned logarithmic curves over the logarithmic scheme $(\underline S, N)$ that pull back via the morphism $S = (\underline S, M_S) \to (\underline S, N)$ to $C$.

Since $\LogStr(\underline S) / M_S$ is equivalent to $\Mon / \overline M_S$, it will be sufficient to describe the characteristic monoid $\overline M$ of $M$.

\begin{proposition} \label{prop:minimal-monoid}
Let $C$ be a radially aligned logarithmic curve over a logarithmic scheme $S$ whose underlying scheme is the spectrum of an algebraically closed field.  Write $\lambda_S$ for the ``distance from the cicuit'' function on the vertices of the tropicalization of $C$.  Let $A$ be the abelian group freely generated by the edges of the dual graph of $C$.  The minimal monoid of $C$ is the sharpening (the quotient by the subgroup of units) of the submonoid of $A$ generated by the smoothing parameters and the differences $\lambda(w) - \lambda(v)$ whenever $\lambda_S(v) \leq \lambda_S(w)$ in $\overline M_S$.
\end{proposition}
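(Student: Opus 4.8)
The plan is to identify the minimal monoid by exhibiting the universal target for the functor $F$ described just before the statement, and then recognizing that universal object combinatorially. First I would observe that a logarithmic structure $N$ in $\Mon / \overline M_S$ supporting a radially aligned logarithmic curve pulling back to $C$ must first of all support the underlying prestable curve $\underline C$ with its nodes; by F.~Kato's structure theorem this is governed precisely by a choice of smoothing parameter in $N$ for each edge of the dual graph, i.e.\ by a homomorphism from the free monoid on the edges into $N$, subject to no relations other than those already present in $\overline M_S$. So the minimal monoid for the \emph{underlying logarithmic curve} (forgetting the alignment) is the sharpening of the submonoid of $A$ generated by the smoothing parameters $\delta_e$, where $A$ is the free abelian group on the edges --- this is the standard computation of the minimal monoid of $\mathfrak M_{g,n}^{\log}$, and it is where the edges-generate-$A$ setup comes from.

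Next I would incorporate the radial alignment. By Proposition~\ref{prop:modification}, $\fM_{1,n}^{\mathrm{rad}} \to \mathfrak M_{1,n}^{\log}$ is the logarithmic modification obtained by subdividing along the hyperplanes $\lambda(v) = \lambda(w)$; concretely, on the level of monoids, imposing that the curve be radially aligned over $(\underline S, N)$ is exactly the condition that the images of $\lambda(v)$ and $\lambda(w)$ in $N$ be comparable for every pair of vertices $v, w$. Here each $\lambda(v) = \sum_i \ell(e_i)$ is the sum of smoothing parameters along the path from $v$ to the circuit, so $\lambda(v)$ is already a well-defined element of the submonoid of $A$ generated by the $\delta_e$. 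Using the hypothesis that $C$ \emph{is} radially aligned over $S$, for each pair $v,w$ we know which of $\lambda_S(v) \leq \lambda_S(w)$ or $\lambda_S(w) \leq \lambda_S(v)$ holds in $\overline M_S$; the universal way to make $\lambda(v)$ and $\lambda(w)$ comparable \emph{compatibly with the given map to $\overline M_S$} is to adjoin the difference $\lambda(w) - \lambda(v) \in A$ (when $\lambda_S(v) \leq \lambda_S(w)$) to the monoid, and no more. This is precisely the local chart description of the logarithmic modification worked out in Example~\ref{ex:log-blowup}: imposing $\alpha \leq \beta$ adjoins $\beta - \alpha$ to the characteristic monoid.

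Then I would assemble these: the minimal monoid $\overline M$ is the sharpening of the submonoid of $A$ generated by the smoothing parameters $\delta_e$ together with all the differences $\lambda(w) - \lambda(v)$ for pairs with $\lambda_S(v) \leq \lambda_S(w)$, which is the asserted description. Finally I would verify universality directly: given any sharp $N$ over $\overline M_S$ and a radially aligned curve over $(\underline S, N)$ restricting to $C$, the smoothing parameters determine a homomorphism $A \to N^{\mathrm{gp}}$ whose image of $\overline M$ lies in $N$ (the $\delta_e$ map into $N$ since they are smoothing parameters, and each difference $\lambda(w)-\lambda(v)$ maps into $N$ by the comparability forced by the alignment together with the compatibility with $\overline M_S$), and this homomorphism is the unique sharp map $\overline M \to N$ inducing the given data; conversely any sharp map $\overline M \to N$ produces such a curve by pullback. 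The main obstacle I anticipate is the bookkeeping in this universality check --- specifically, verifying that the relations among the $\lambda$-differences that hold in $\overline M_S$ but are not forced in $A$ do not need to be imposed in $\overline M$ (equivalently, that sharpening the submonoid of $A$, rather than some quotient, gives the correct answer), which comes down to the fact that the logarithmic modification in Proposition~\ref{prop:modification} is cut out by the \emph{hyperplanes} $\lambda(v) = \lambda(w)$ inside the cone dual to the minimal monoid of $\mathfrak M_{1,n}^{\log}$, so that a chart of the subdivision is generated over that minimal monoid by exactly one difference per comparable pair.
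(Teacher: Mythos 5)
Your proposal is correct and follows essentially the same route as the paper: write down the candidate monoid as the sharpening of the submonoid of $A$ generated by the smoothing parameters and the differences $\lambda(w) - \lambda(v)$, and then verify the universal property by showing that the canonical map out of $\overline M_0$ (the minimal monoid of the underlying logarithmic curve) factors through $\overline M$ for any radially aligned $C'$ restricting to $C$. The appeals to Proposition~\ref{prop:modification} and Example~\ref{ex:log-blowup} are serviceable motivation but not strictly needed; the paper dispenses with them and goes straight to the universality check. The only place you compress a bit is the clause ``the comparability forced by the alignment together with the compatibility with $\overline M_S$'': the paper makes this explicit by observing that the total preorder on the vertices of $\plC'$ induced by $\lambda'$ must agree with the one on $\plC$ induced by $\lambda_S$, because a strict inequality $\lambda_S(v) < \lambda_S(w)$ forces $\lambda'(v) \le \lambda'(w)$ (otherwise pushing forward to $\overline M_S$ would reverse it), while an equality $\lambda_S(v) = \lambda_S(w)$ forces $\lambda'(v) = \lambda'(w)$ since the sharp (local) homomorphism $\overline M'_S \to \overline M_S$ sends only $0$ to $0$. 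That observation, not the blowup picture, is what resolves the ``bookkeeping'' worry you raise at the end about whether sharpening the submonoid of $A$ suffices.
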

\begin{proof}
Let $M_0$ be the minimal logarithmic structure associated to the logarithmic curve $C$ (without taking account of its radial alignment).  The characteristic monoid $\overline M_0$ is well-known to be freely generated by the edges $e$ of the tropicalization $\plC$ of $C$.  Let $\lambda$ denote the ``distance from the circuit'' function valued in $\overline M_0$ and let $\overline M$ be the submonoid of $\overline M_0^{\rm gp}$ generated by $\overline M_0$ and the differences $\lambda(w) - \lambda(v)$ whenever $\lambda_S(w) - \lambda_S(v) \in \overline M_S$.

Now, suppose that $C' \in F(M')$ for some $M'_S \in \LogStr(\underline S) / M_S$.  Then the tropicalization $\plC'$ of $C'$ has edge lengths in $\overline M'_S$.  We write $\lambda'_S$ for the ``distance from the circuit'' function of $\plC'$.  By the unviersal property of $M_0$, we have a unique morphism $M_0 \to M'$ that induces $C$.  We argue that it factors through $M$.

By definition of radial alignment, the vertices of $\plC'$ are totally ordered by $\lambda'$ and this order is compatible with the homomorphism $M'_S \to M_S$.  But $\plC$ and $\plC'$ have the same underlying graph, so the vertices of $\plC'$ have the \emph{same} total order as those of $\plC$, and therefore whenever $\lambda_S(w) - \lambda_S(v) \in \overline M_S$, the difference $\lambda'_S(w) - \lambda'_S(v)$ is in $\overline M'_S$.  This is exactly what is needed to guarantee the required factorization, which is necessarily unique.
\end{proof}

\begin{corollary} \label{cor:loc-free}
	The minimal characteristic monoid of a radially aligned logarithmic curve with tropicalization $\plC$ is freely generated by the lengths of the edges in the circuit and the nonzero differences $\lambda(v) - \lambda(w)$ for $v$ and $w$ among the vertices of $\plC$.
\end{corollary}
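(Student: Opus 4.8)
The plan is to start from the explicit description of the minimal monoid provided by Proposition~\ref{prop:minimal-monoid} and then carry out a change of generators, after which the freeness becomes transparent. Recall that Proposition~\ref{prop:minimal-monoid} identifies the minimal characteristic monoid $\overline M$ with the sharpening of the submonoid of $A = \bigoplus_{e} \mathbf{Z} e$ generated by the smoothing parameters $\ell(e)$ (i.e.\ the generators $e$ themselves) together with the differences $\lambda(w) - \lambda(v)$ whenever $\lambda_S(v) \leq \lambda_S(w)$ in $\overline M_S$. The key structural fact is that, since $C$ has genus~$1$, the circuit $\plC_0$ is either a single vertex of genus~$1$ (possibly with a genus~$1$ weight) or a single cycle of rational vertices; in either case, every vertex $v$ of $\plC$ not on the circuit has a \emph{unique} path to the circuit, and $\lambda(v)$ is the sum of the edge lengths along that path. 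Consequently the edges of $\plC$ split into two classes: the circuit edges (those lying on the unique cycle, if the circuit is a cycle) and the ``off-circuit'' edges, each of which, when oriented toward the circuit, is the last edge of the path from some vertex.

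First I would set up the bijection between off-circuit edges and non-circuit vertices: an off-circuit edge $e$, oriented toward the circuit, has a well-defined ``outer'' endpoint $v(e)$, and $e \mapsto v(e)$ is a bijection between off-circuit edges and non-circuit vertices. Under this bijection, if $e$ has outer endpoint $v$ and its other endpoint is $w$ (which is one step closer to the circuit), then $\ell(e) = \lambda(v) - \lambda(w)$. So each off-circuit edge generator $\ell(e)$ is \emph{already} one of the difference generators $\lambda(v) - \lambda(w)$, and conversely an arbitrary difference $\lambda(v) - \lambda(w)$ with $\lambda_S(v) \geq \lambda_S(w)$ telescopes: writing the path from $v$ to the circuit and the path from $w$ to the circuit, and using that these paths eventually coincide, $\lambda(v) - \lambda(w)$ is an integer combination (with signs) of the $\ell(e)$ along the symmetric difference of the two paths. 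But the radial alignment forces $\lambda(v)$ and $\lambda(w)$ to be comparable, and the comparison $\lambda_S(v) \geq \lambda_S(w)$ together with the ordering property of $\lambda$ (monotonicity along paths to the circuit, noted before Definition~\ref{def:tropicalization}'s successor) pins down the sign, so in fact $\lambda(v) - \lambda(w)$ lies in the submonoid generated by the \emph{circuit edge lengths} and the \emph{nonzero differences} $\lambda(v') - \lambda(w')$ of the stated form.

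Next I would argue the freeness. Take as a candidate generating set $G$ the lengths of the circuit edges together with a maximal chain of the totally ordered values $\{\lambda(v)\}$: concretely, order the distinct values $0 = \mu_0 < \mu_1 < \cdots < \mu_k$ (the total order coming from radial alignment), and take the consecutive differences $\mu_{j} - \mu_{j-1}$. Every difference $\lambda(v) - \lambda(w)$ appearing in Proposition~\ref{prop:minimal-monoid} is a sum of consecutive-difference generators $\mu_j - \mu_{j-1}$, and every circuit edge length is one of the chosen generators, so $G$ generates $\overline M$. For freeness I would exhibit $\overline M$ as (the sharpening of) $\mathbf{N}^{G}$ by producing the inverse map: the circuit edge lengths are part of a basis of $A$, and the consecutive differences $\mu_j - \mu_{j-1}$ are nonnegative integer combinations of the off-circuit edge generators with the combinatorics of a ``layered'' partial order, so one checks that $G$ is in fact a $\mathbf{Z}$-linearly independent subset of $A^{\rm gp} = \overline M_0^{\rm gp}$ whose nonnegative span is exactly $\overline M$; independence follows because distinct $\mu_j - \mu_{j-1}$ are supported on disjoint ``annuli'' of edges and the circuit edges are genuinely independent of everything. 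Sharpening kills precisely the units, which after this change of generators are absorbed into the identification, leaving $\overline M \cong \mathbf{N}^{G}$.

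The main obstacle I anticipate is the bookkeeping in the second and third paragraphs: making precise the telescoping identity for $\lambda(v) - \lambda(w)$ in terms of consecutive-difference generators, and verifying $\mathbf{Z}$-linear independence of the chosen generating set inside $\overline M_0^{\rm gp} = A^{\rm gp}$. The genus~$1$ hypothesis is doing real work here — it guarantees the circuit is a single cycle (or vertex) so that off-circuit vertices have unique paths to the circuit, which is what makes the $\lambda$-values totally ordered in a way compatible with a chain of free generators; without it the analogous statement would fail. Everything else is a matter of carefully translating Proposition~\ref{prop:minimal-monoid} through the edge/vertex bijection and tracking signs using the comparability guaranteed by radial alignment.
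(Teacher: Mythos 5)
Your overall plan --- rewrite the generators of Proposition~\ref{prop:minimal-monoid} in terms of the circuit edge lengths and the consecutive differences $\mu_j - \mu_{j-1}$ of the ordered radii, prove generation by telescoping, and prove freeness by exhibiting this set as a basis --- is the same change-of-generators argument that the paper compresses into a single sentence, and your generation step is correct. The gap is in the independence step: both reasons you offer for it are false. The consecutive differences are \emph{not} in general nonnegative integer combinations of the off-circuit edge lengths, and distinct consecutive differences are \emph{not} supported on disjoint sets of edges of $\plC$. For instance, take a genus-$1$ vertex $c$ carrying a chain $c - v_1 - v_2$ with edges $e_1, e_2$ and a separate tail $c - w$ with edge $e_3$, aligned so that $\lambda(v_1) \leq \lambda(w) \leq \lambda(v_2)$. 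The consecutive differences are $\ell(e_1)$, $\ell(e_3) - \ell(e_1)$, and $\ell(e_1) + \ell(e_2) - \ell(e_3)$: the last two have negative coefficients, all three involve $e_1$, and the last involves every edge. The ``disjoint annuli'' picture only becomes literally true after subdividing the edges of $\plC$ along the circles of radius $\mu_j$ (i.e.\ on the modification $\widetilde C$), where the lattice of edge lengths is no longer $A$; inside $A^{\rm gp}$ it yields no independence statement as written.

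Fortunately the independence is true and easily recovered, so this is a gap of justification rather than of strategy. Your telescoping identities already show that every $\ell(e)$ lies in the span of your set $G$ (each off-circuit $\ell(e)$ equals $\lambda(v) - \lambda(w)$ for its two endpoints, hence is a sum of consecutive differences), so $G$ spans $A^{\rm gp} \otimes \QQ$; when no two values $\lambda(v)$ are identified, $G$ has exactly $|E|$ elements by your edge--vertex bijection (circuit edges plus one consecutive difference per non-circuit vertex), and a spanning set of cardinality equal to the rank is a basis. Equivalently, listing the non-circuit vertices by increasing radius, the matrix expressing the consecutive differences in the off-circuit edge lengths is triangular with $1$'s on the diagonal, because the parent vertex of $v_i$ strictly precedes $v_i$; so the change of generators is unimodular. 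In the presence of ties you should also justify, rather than assert, that the units of the monoid of Proposition~\ref{prop:minimal-monoid} are exactly the subgroup generated by the differences $\lambda(v) - \lambda(w)$ with $\lambda_S(v) = \lambda_S(w)$; with the basis description applied to the quotient by that subgroup this is immediate. With these repairs your argument becomes a fleshed-out version of the paper's own terse proof, which simply observes that adjoining the differences amounts to rewriting each smoothing parameter as a sum of new generators and that such an identification keeps the characteristic monoid free.
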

\begin{proof}
	The minimal monoid of the logarithmic curve $C$ is freely generated by the smoothing parameters of the nodes.  The quotient described in Proposition~\ref{prop:minimal-monoid} identifies will identify one smoothing parameter with the sum of other smoothing parameters, but the result of such an identification is always locally free.
\end{proof}

Said differently, one may dualize to obtain a tropical description of the minimal radially aligned monoid. Let $\sigma$ be a cone of abstract tropical curves of genus $1$ tropical curves. Let $\widetilde \sigma\to \sigma$ be the subdivision induced by totally ordering the vertices of the dual graph. The minimal base monoid constructed in the proposition can be understood as follows. If $S = \spec(P\to k)$ is a logarithmic enhancement of a closed point, and $\pi: C\to S$ be a radially aligned logarithmic curve, then there is a canonical morphism of rational polyhedral cones, $P^\vee \to \sigma$.  As $C$ is radially aligned, this morphism factors through some cone in the subdivision $\tilde\sigma$.  There is a minimal such cone with respect to face inclusions, and the minimal monoid is the dual monoid of that cone.  See Figure~\ref{fig: minimal-monoid}.

\begin{corollary} \label{cor:smooth}
	The underlying algebraic stack of $\mathfrak M_{1,n}^{\mathrm{rad}}$ is smooth.
\end{corollary}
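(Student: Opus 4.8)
The plan is to combine the two facts already in hand: that $\fM_{1,n}^{\mathrm{rad}}$ is logarithmically smooth (Corollary~\ref{cor:algebraic}) and that its characteristic monoids are locally free (Corollary~\ref{cor:loc-free}).  It is worth emphasizing at the outset that logarithmic smoothness by itself is \emph{not} enough: a logarithmic modification of a logarithmically smooth space can easily fail to be smooth, precisely when the induced subdivision of cones produces a non-unimodular cone.  So the real content of the argument is that Corollary~\ref{cor:loc-free} rules out exactly this pathology.

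First I would record that $\fM_{1,n}^{\mathrm{rad}}$ is logarithmically smooth over $\Spec\ZZ$ equipped with its trivial logarithmic structure.  The stack $\mathfrak M_{1,n}^{\log}$ has underlying stack the smooth algebraic stack $\mathfrak M_{1,n}$ of $n$-pointed prestable genus~$1$ curves, with logarithmic structure of normal-crossings type, hence is logarithmically smooth over $\Spec\ZZ$; and $\fM_{1,n}^{\mathrm{rad}} \to \mathfrak M_{1,n}^{\log}$ is logarithmically étale by Proposition~\ref{prop:modification}.  It therefore suffices to establish the following general local fact: a logarithmically smooth algebraic stack $X$ over a base carrying the trivial logarithmic structure, all of whose characteristic stalks $\overline M_{X,x}$ are free monoids, has smooth underlying stack.

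For this I would invoke Kato's structure theorem for logarithmically smooth morphisms.  Étale-locally near a geometric point $x$, one can choose a chart $\overline M_{X,x} \to M_X$; since the logarithmic structure on the base is trivial, logarithmic smoothness then forces the induced morphism $\underline X \to \Spec\ZZ[\overline M_{X,x}]$ to be smooth in a neighbourhood of $x$.  By Corollary~\ref{cor:loc-free}, applied after identifying the characteristic stalk of $\fM_{1,n}^{\mathrm{rad}}$ at a point with the minimal monoid of the radially aligned logarithmic curve it classifies (this is the content of the ``minimal logarithmic structure'' discussion), the monoid $\overline M_{X,x}$ is free of some rank $r$, so $\Spec\ZZ[\overline M_{X,x}] \cong \mathbf A^r$ is smooth over $\ZZ$; composing, $\underline X$ is smooth over $\ZZ$ near $x$.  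Equivalently, and more concretely, one may read this off the proof of Proposition~\ref{prop:modification} directly: étale-locally $\fM_{1,n}^{\mathrm{rad}}$ is $S \times_V W$, where $V = \Spec\ZZ[P]$, the chart map $\underline S \to \underline V$ is smooth because the boundary of $\mathfrak M_{1,n}$ is normal crossings, and $W$ is the toric variety of the subdivision separating the linear functions $\lambda(v)$; Corollary~\ref{cor:loc-free} says exactly that the cones of this subdivision are unimodular, so $W$ is a smooth toric variety, and hence so is the base change.

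The only steps demanding genuine care are the identification of the characteristic stalks of $\fM_{1,n}^{\mathrm{rad}}$ with the minimal monoids computed in Corollary~\ref{cor:loc-free}, and keeping straight that smoothness here is a real consequence of the unimodularity in Corollary~\ref{cor:loc-free} and not of the logarithmic modification structure alone --- the morphism $\fM_{1,n}^{\mathrm{rad}} \to \mathfrak M_{1,n}^{\log}$ is logarithmically étale but typically very far from étale on underlying stacks, so no shortcut through Proposition~\ref{prop:modification} avoids the local freeness input.
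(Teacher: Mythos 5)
Your proposal is correct and follows the same route as the paper, which proves the corollary exactly by combining Corollary~\ref{cor:algebraic} (logarithmic smoothness) with Corollary~\ref{cor:loc-free} (local freeness of the characteristic monoids); the paper simply leaves implicit the standard fact, which you spell out via Kato's structure theorem and the toric chart from Proposition~\ref{prop:modification}, that a logarithmically smooth stack with locally free logarithmic structure has smooth underlying stack. Your added detail, including the warning that logarithmic smoothness alone does not suffice, is accurate but not a different argument.
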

\begin{proof}
	We saw in Corollary~\ref{cor:algebraic} that it is logarithmically smooth and in Corollary~\ref{cor:loc-free} that its logarithmic structure is locally free.
\end{proof}

\begin{figure}
\includegraphics[scale=0.35]{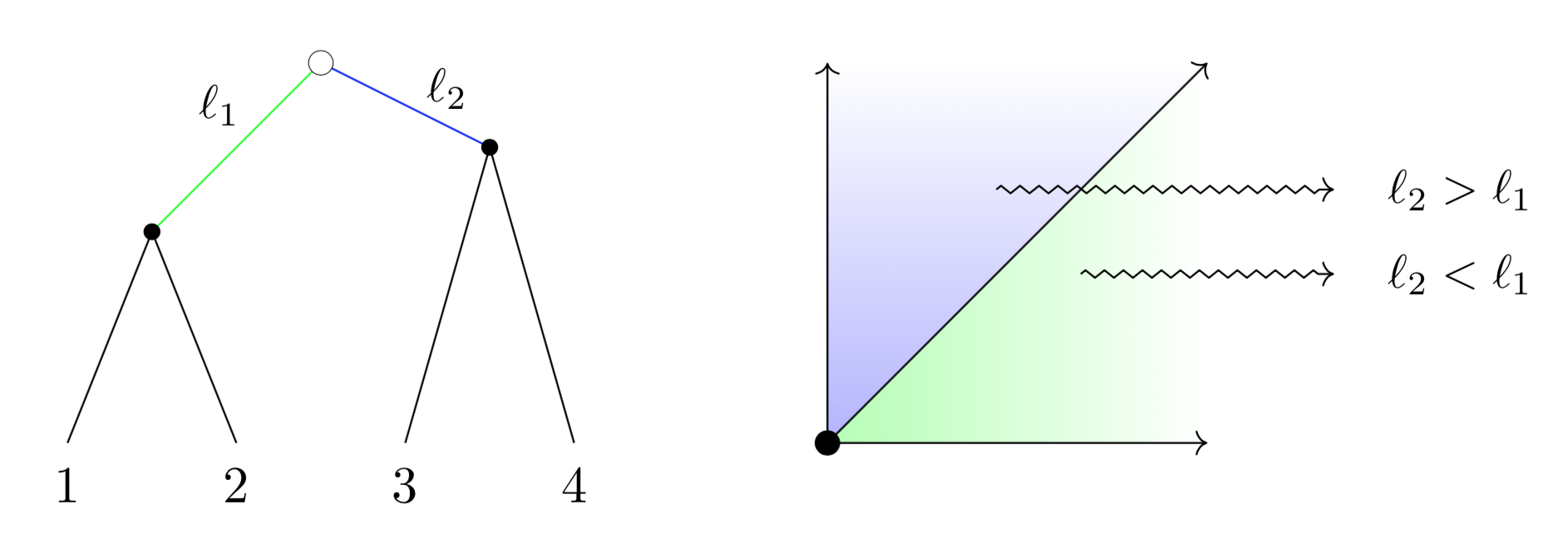}
%
%
%
%
%
%
%
%
\caption{The cone on the right without its subdivision is the minimal monoid of a logarithmic curve with dual graph on the left. Each of the cones of a subdivision is a different minimal radially aligned curve.}\label{fig: minimal-monoid}
\end{figure}

\subsection{Circles around the circuit}
\label{sec:circles-circuit}

We introduce a logarithmic version of Smyth's $m$-stability conditions~\cite[Section 1]{Smyth}.

\begin{definition} \label{def:valence}
Let $C$ be a radially aligned logarithmic curve over a logarithmic scheme $S$ whose underlying scheme is the spectrum of an algebraically closed field.  Let $\plC$ be the tropicalization of $C$.  Let $\lambda$ be the ``distance from the circuit'' function on the vertices of $\plC$.  Suppose that $\delta \in \overline M_S$.  We say that $\delta$ is \textbf{comparable to the radii} of $C$ if it is comparable to $\lambda(v)$ for all vertices $v$ of $\plC$.  

Let $e$ be an edge of $\plC$ incident to vertices $v$ and $w$ with $\lambda(v) < \lambda(w)$.  We say that $e$ is \textbf{incident} to the circle of radius $\delta$ if $\lambda(v) < \delta \leq \lambda(w)$.  We say that $e$ is \textbf{excident} to the circle of radius $\delta$ around the circuit of $\plC$ if $\lambda(v) \leq \delta < \lambda(w)$.

We define the \textbf{inner valence} and \textbf{outer valence} of $\delta$, respectively, to be the number of edges of $\plC$ incident and excident from the circle of radius $\delta$.
\end{definition}

Some remarks about this definition are in order:
\begin{enumerate}[(A)]
\item Intuitively, an edge of $\plC$ is incident to the circle of radius $\delta$ if it crosses the circle.  This concept becomes ambiguous when the circle crosses a vertex of $\plC$, where we must distinguish edges that contact the circle from the inside from those that contact it from the outside.
\item If an edge $e$ of $\plC$ connects vertices $v$ and $w$ that are not both on the circuit then either $\lambda(v) < \lambda(w)$ or $\lambda(w) < \lambda(v)$.  By definition of radial alignment, we have one or the other non-strict inequality.  But equality is impossible, for $\lambda(v) - \lambda(w) = \pm \delta(e)$, where $\delta(e)$ is the smoothing parameter of $e$ and in particular is nonzero.  There is no way for the edge to lie \emph{within} the circle of radius $\delta$.
\item If $v$ is a vertex of the tropicalization $\plC$ of a \emph{stable}, radially aligned logarithmic curve and $v$ is not on the circuit then there is exactly one edge of $\plC$ incident to $v$ and at least two edges (including legs) of $\plC$ excident from $v$.  If the curve is merely semistable then there is still one incident edge and at least one excident edge.  We leave the verification of these statements to the reader.
\item It follows from the previous observation that the inner valence of the circle of radius $\delta$ on a \emph{semistable}, radially aligned logarithmic curve is always bounded above by the outer valence.
\end{enumerate}

\begin{proposition}
Suppose that $C$ is a radially aligned, semistable logarithmic curve over $S$ and that $\delta$ is a global section of $\overline M_S$ that is comparable to the radii of $C$.  For each geometric point $s$ of $S$, let $\eta(s)$ and $\tau(s)$ be the inner and outer valence, respectively, of the circle of radius $\delta$ on the tropicalization of $C$.  Then $\eta$ is upper semicontinuous and $\tau$ is lower semicontinuous.
\end{proposition}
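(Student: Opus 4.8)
The plan is to reduce to a statement about generizations and then to analyze how the tropicalization degenerates.

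Both $\eta$ and $\tau$ are constructible functions on $S$: \'etale-locally on $S$ one may take $\overline M_S$ constant, with the curve, the function $\lambda$, and the section $\delta$ all pulled back from a toric variety, so $\eta$ and $\tau$ are constant on a finite stratification. For such functions, upper semicontinuity of $\eta$ and lower semicontinuity of $\tau$ are equivalent to the inequalities $\eta(s)\ge\eta(t)$ and $\tau(s)\le\tau(t)$ holding for every geometric specialization $t\leadsto s$. Fix such a specialization. As recalled in Section~\ref{sec: log-trop}, it produces an order‑preserving, surjective generization homomorphism $\phi\colon\overline M_{S,s}\to\overline M_{S,t}$ and a compatible edge contraction $\pi\colon\plC_s\to\plC_t$ collapsing exactly the edges $e$ with $\phi(\ell_s(e))=0$; moreover $\lambda_t\circ\pi=\phi\circ\lambda_s$ on vertices and $\delta_t=\phi(\delta_s)$. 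Since $\phi$ is order‑preserving, comparability of $\delta$ with the radii is inherited by $\plC_t$.

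\textbf{Reformulation.} I would pass to the semistable model $\widetilde C$ obtained by inserting a rational component at distance $\delta$ along every edge and every marking strictly straddled by the circle; comparability makes this a logarithmic modification of $C$ over all of $S$, so $\widetilde C\to S$ is again a well‑behaved family and the data above applies verbatim to its tropicalization $\widetilde\plC$. By construction $\widetilde\plC$ has no edge or leg straddling the circle, and a short argument then yields, when $\delta>0$ (the case $\delta=0$ is degenerate, with $\eta=0$, and handled separately): calling a vertex of $\widetilde\plC$ a \emph{circle vertex} when its distance from the circuit equals $\delta$, the inner valence $\eta$ is the number of circle vertices, equivalently the number of connected components of the forest $\widetilde\plC^{\ge\delta}$ spanned by the vertices at distance $\ge\delta$ (each component contains a unique circle vertex, namely its vertex closest to the circuit), while the outer valence $\tau$ is the total number of flags — edges and legs — at circle vertices that point away from the circuit.

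\textbf{Upper semicontinuity of $\eta$.} I would show that $\pi$ restricts to a \emph{surjective} edge contraction $\widetilde\plC_s^{\ge\delta}\twoheadrightarrow\widetilde\plC_t^{\ge\delta}$; then $\eta(t)\le\eta(s)$ because contracting edges never increases the number of connected components. Order‑preservation of $\phi$ immediately gives $\pi(\widetilde\plC_s^{\ge\delta})\subseteq\widetilde\plC_t^{\ge\delta}$ and that a noncontracted edge of $\widetilde\plC_s^{\ge\delta}$ maps to an edge of $\widetilde\plC_t^{\ge\delta}$; the one substantive point is surjectivity, i.e.\ that the $\pi$‑preimage $\widetilde T_{\bar u}$ of any circle vertex $\bar u$ of $\widetilde\plC_t$ meets $\widetilde\plC_s^{\ge\delta}$ — in fact contains a circle vertex. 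This is where the absence of straddling edges in $\widetilde\plC_s$ enters: $\widetilde T_{\bar u}$ is a connected subtree on which $\phi\circ\lambda_s\equiv\delta_t$, so it cannot contain a strictly‑inside vertex adjacent to a strictly‑outside one; if it contained no circle vertex it would be entirely strictly inside or entirely strictly outside, and either alternative, combined with the existence (by semistability) of a flag of $\bar u$ pointing away from the circuit, forces a straddling edge or leg of $\widetilde\plC_s$ — a contradiction.

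\textbf{Lower semicontinuity of $\tau$.} I would partition the away‑flags at circle vertices of $\widetilde\plC_s$ according to which circle vertex of $\widetilde\plC_t$ they lie over, and prove, for each circle vertex $\bar u$ of $\widetilde\plC_t$, that the away‑flags at all circle vertices of $\widetilde\plC_s$ contained in $\widetilde T_{\bar u}$ inject into the away‑flags of $\bar u$. The assignment sends a leg to its image, an away‑edge leaving $\widetilde T_{\bar u}$ to its image, and an away‑edge that stays inside $\widetilde T_{\bar u}$ to a chosen flag exiting the part of $\widetilde T_{\bar u}$ below it (such an exit flag exists, again by semistability applied at a leaf). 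Injectivity comes from the tree structure of $\widetilde T_{\bar u}$ — the circle vertices it contains are pairwise incomparable, so the subtrees below distinct away‑edges are disjoint — together with the fact that distinct noncontracted edges of $\widetilde\plC_s$ have distinct images. Summing these inequalities over $\bar u$ gives $\tau(s)\le\tau(t)$. The main obstacle throughout is exactly this surjectivity/injectivity bookkeeping: one must control every way the tropical picture can degenerate under specialization — collapsing edges of $\plC_s$ and the circle landing on a vertex — and the organizing principle that tames it is that a semistable model has no edge straddling the circle, which forces the preimage subtrees $\widetilde T_{\bar u}$ to be monotone in the distance function and hence to carry genuine circle vertices.
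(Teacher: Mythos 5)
Your proposal is correct in substance but follows a genuinely different route from the paper after the common first step. Both you and the paper reduce, via constructibility, to checking $\eta(t)\le\eta(s)$ and $\tau(s)\le\tau(t)$ along a geometric specialization $t\leadsto s$, using that $\plC_t$ is a weighted edge contraction of $\plC_s$ compatible with the generization map on characteristic monoids. The paper then disposes of the two inequalities in three lines, by sorting the contracted edges into those that are neither incident nor excident, incident, or excident, and recording how each type of contraction can move $\eta$ and $\tau$. You instead pass to the subdivision of Proposition~\ref{prop:C_delta} (extended to legs), reformulate $\eta$ as the number of circle vertices (equivalently, components of the exterior forest) and $\tau$ as the number of outward flags at circle vertices, and then do explicit surjectivity/injectivity bookkeeping on the preimage trees of circle vertices of $\plC_t$. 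Your version is considerably longer, but it has a real advantage: it accounts for the fact that under generization the incidence/excidence status of an edge can change even when no edge of $\plC_s$ is contracted (the map $\phi$ may kill $\delta-\lambda(v)$ while preserving all edge lengths); in the subdivided model these status changes become contractions of the inserted edges, so your argument in effect makes the paper's edge-contraction dichotomy airtight, at the price of the $\widetilde T_{\bar u}$ combinatorics.

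Two small repairs are needed, both local. First, in the $\eta$-surjectivity step your dichotomy attributes both alternatives to an away-flag of $\bar u$, but when $\widetilde T_{\bar u}$ is entirely \emph{strictly outside} the circle an away-flag gives no contradiction: its preimage is a leg at a strictly-outside vertex or an edge with both ends strictly outside (comparability forces $\lambda_s(x)>\delta_s$ whenever $\phi(\lambda_s(x))>\delta_t$), hence not straddling. The contradiction in that case comes from the unique flag of $\bar u$ pointing \emph{toward} the circuit, which exists because $\delta_t>0$ places $\bar u$ off the circuit: its preimage joins a vertex of $\widetilde T_{\bar u}$ (strictly outside) to a vertex whose image has $\lambda_t<\delta_t$, hence $\lambda_s<\delta_s$ by comparability, and that is a straddling edge of the subdivided curve. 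Second, the mixed boundary case $\delta_s\neq 0$ but $\phi(\delta_s)=0$ escapes your ``$\delta=0$ handled separately'' remark: there $\eta(t)=0$ makes the first inequality trivial, but in the $\tau$-injection the ``circle vertices'' of $\plC_t$ are circuit vertices, an edge may join two of them, and an away-edge leaving $\widetilde T_{\bar u}$ can land at level $\delta_t$ without being excident; replacing $\widetilde T_{\bar u}$ throughout by the level set $\{\phi\circ\lambda_s=\delta_t\}$ below the flag in question (which is again a tree of off-circuit vertices) makes your recipe and its injectivity argument go through uniformly in all cases.
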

\begin{proof}
As $\eta$ and $\tau$ are constant on the logarithmic strata of $S$, they are constructible functions.  It is therefore sufficient to show that for every geometric specialization $t \leadsto s$ of $S$, we have $\eta(t) \leq \eta(s)$ and $\tau(t) \geq \tau(s)$.  But if $\plC_s$ and $\plC_t$ denote the tropicalizations of $C_s$ and $C_t$ then $\plC_t$ is obtained from $\plC_s$ by a weighted edge contraction.  The proposition follows from the following three observations:
\begin{enumerate}[(1)]
\item Contracting edges that are neither incident to $\delta$ nor excident from it does not change $\eta$ or $\tau$.
\item Contracting edges incident to $\delta$ does not change $\tau$ but may decrease $\eta$.
\item Contracting edges excident from $\delta$ does not change $\eta$ but may increase $\tau$.
\end{enumerate}
\end{proof}

\begin{definition} \label{def:delta_m}
Let $C$ be a family of radially aligned genus~$1$ logarithmic curves over $S$.  For each integer $m$ such that $0 \leq m \leq n$, we say that $\delta \in \overline M_S$ is \textbf{$m$-stable} if
\begin{enumerate}[(i)]
\item $\delta_s$ is comparable to $\lambda_s(v)$ for all vertices $v$ of $\plC_s$, and 
\item the circle of radius $\delta_s$ around the circuit of $\plC_s$ has inner valence $\leq m$ and outer valence $> m$.
\end{enumerate}
	If an $m$-stable radius exists, we write $\delta^m$ for the \emph{smallest} $m$-stable radius.
\end{definition}

\begin{proposition} \label{prop:m-stable}
	If $C$ is a \emph{semistable}, radially aligned logarithmic curve over $S$, where the underlying scheme of $S$ is the spectrum of an algebraically closed field, then an $m$-stable radius exists.
\end{proposition}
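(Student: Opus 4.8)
The plan is to exhibit an explicit $m$-stable radius, from which the existence of a \emph{smallest} one follows because $\overline M_S$ is a sharp integral saturated monoid (so nonempty subsets that are closed upward have minimal elements, and here the minimal element is unique once we know comparability holds). Since the underlying scheme of $S$ is the spectrum of an algebraically closed field, there is a single tropicalization $\plC$ to work with, and the values $\lambda(v)$, as $v$ ranges over the vertices of $\plC$, form a totally ordered subset of $\overline M_S$ by the radial alignment hypothesis. List them as $0 = \lambda_0 < \lambda_1 < \cdots < \lambda_N$, where $\lambda_0 = 0$ is attained precisely on the circuit. For a radius of the form $\delta = \lambda_j$, condition (i) of Definition~\ref{def:delta_m} is automatic: any such $\delta$ is one of the $\lambda(v)$, hence comparable to all of them. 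So the whole problem reduces to finding an index $j$ for which the circle of radius $\lambda_j$ has inner valence $\leq m$ and outer valence $> m$.

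First I would analyze how inner and outer valence vary as $j$ increases from $0$ to $N$. Using remark (C) after Definition~\ref{def:valence}: every non-circuit vertex $v$ has exactly one incident edge (the first edge on the path from $v$ to the circuit) and at least one excident edge. At radius $\delta = 0 = \lambda_0$: every edge leaving the circuit is excident and none is incident, so $\eta(0) = 0$ and $\tau(0)$ equals the number of edges (and legs, via the usual convention that a marking on the circuit counts) leaving the circuit — this is $> 0$ as long as there is anything outside the circuit. As $\delta$ increases past each value $\lambda_j$, the set of vertices strictly inside the circle grows by the vertices at distance exactly $\lambda_j$; passing such a vertex $v$ converts its unique incident edge into an ``inside'' edge (removing it from, or rather first adding it to and then removing it from, the relevant counts) and turns its excident edges into edges crossing the new, larger circle. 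Concretely I would track the two functions $j \mapsto \eta(\lambda_j)$ and $j \mapsto \tau(\lambda_j)$ and observe, via the three bullet-style observations in the proof of the preceding Proposition (contracting/crossing edges of each type), that as $j$ grows $\eta$ can only be affected by edges incident at that level and $\tau$ only by edges excident at that level. The key structural fact is that the ``cut'' defined by the circle of radius $\delta$ always separates the circuit-side from the infinity-side, so by semistability (remark (D)) we always have $\eta(\delta) \leq \tau(\delta)$.

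Next, the main step: among $j = 0, 1, \ldots, N$, choose the largest $j$ such that the circle of radius $\lambda_j$ has inner valence $\leq m$. This set of indices is nonempty because $j = 0$ gives $\eta = 0 \leq m$. Let $j^\star$ be this largest index and set $\delta = \lambda_{j^\star}$. By choice, $\eta(\delta) \leq m$; I must check $\tau(\delta) > m$. Suppose not, so $\tau(\delta) \leq m$. If $j^\star = N$ then everything is inside the circle, so there are no crossing edges at all, which forces $\plC$ to be just the circuit — but then $\tau$ and $\eta$ are both $0$, and in that degenerate case one checks directly that $\delta = 0$ is already $m$-stable precisely when $m = 0$ (and otherwise the curve has a single vertex and the statement is vacuous or handled by convention); I would dispatch this boundary case separately. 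Otherwise $j^\star < N$, and I pass to the next radius $\delta' = \lambda_{j^\star + 1}$. Crossing from $\delta$ to $\delta'$ we absorb the vertices at distance exactly $\lambda_{j^\star+1}$ into the interior; the new inner valence $\eta(\delta')$ is obtained from $\tau(\delta)$ by removing exactly the incident edges of those newly-absorbed vertices (one per such vertex) — so $\eta(\delta') = \tau(\delta) - (\text{number of vertices at distance } \lambda_{j^\star+1})$, which is $\leq \tau(\delta) \leq m$. But that contradicts the maximality of $j^\star$. Hence $\tau(\delta) > m$, and $\delta = \lambda_{j^\star}$ is $m$-stable.

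The hard part will be pinning down the bookkeeping in the crossing step precisely — in particular making sure the half-edges/legs and the ``contact order $1$'' normalization from the remark after Lemma~\ref{lem:lambda-canonical} are handled consistently, and correctly identifying, when several vertices sit at the same distance $\lambda_j$, how the incident and excident edge counts transform. The inequality $\eta \leq \tau$ from semistability is what makes the induction close: it guarantees that the inner valence can never ``jump over'' $m$ without the outer valence having been $> m$ just before. Once the combinatorics is nailed down, existence of the smallest $m$-stable radius is formal: the $m$-stable radii among $\{\lambda_0, \ldots, \lambda_N\}$ form a nonempty finite set, and more generally any $m$-stable $\delta$ is comparable to all $\lambda(v)$ and sandwiched as $\lambda_{i} < \delta \leq \lambda_{i+1}$ for some $i$ with $\eta$ determined by $i$, so replacing $\delta$ by the relevant $\lambda$-value does not change the valences; thus the minimum is realized at one of the finitely many $\lambda_j$, and it is unique because these are totally ordered.
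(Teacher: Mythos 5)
Your overall strategy is the same as the paper's: restrict attention to radii in the finite, totally ordered set $\Lambda = \{\lambda(v)\}$, observe that every non-circuit vertex has exactly one incident edge and, by semistability, at least one excident edge, and run a discrete intermediate-value argument; the paper phrases this as monotonicity of both valences on $\Lambda$ together with the endpoint values (inner valence $0$ at $\delta = 0$, outer valence $n$ at the maximal radius). Your main step is also sound, but for a cleaner reason than the one you give: for consecutive values $\lambda_j < \lambda_{j+1}$ in $\Lambda$, the asymmetric inequalities in Definition~\ref{def:valence} show that an edge is incident to the circle of radius $\lambda_{j+1}$ if and only if it is excident from the circle of radius $\lambda_j$ (and likewise for legs under your convention), so in fact $\eta(\lambda_{j+1}) = \tau(\lambda_j)$ exactly. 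Your formula expressing $\eta(\lambda_{j^\star+1})$ as $\tau(\lambda_{j^\star})$ minus the number of newly reached vertices is therefore incorrect --- those vertices lie \emph{on} the new circle, not inside it, so their parent edges are still incident --- but since you only use the inequality $\eta(\lambda_{j^\star+1}) \leq \tau(\lambda_{j^\star})$, no harm results at that point. (The inequality $\eta(\delta)\le\tau(\delta)$ at a fixed radius, which you cite as the key fact, is not really what closes the argument; the crossing relation above is.)

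The genuine gap is your treatment of the case $j^\star = N$. The claim that then ``there are no crossing edges at all, which forces $\plC$ to be just the circuit'' with $\tau = \eta = 0$ is false: legs count toward the outer valence (you use this yourself at $\delta = 0$), and every leg is excident from the circle of radius $\lambda_N$, so $\tau(\lambda_N) = n$; moreover vertices at distance exactly $\lambda_N$ sit on the circle, so their parent edges may still be incident. Since the setup presupposes $m < n$ (as does the paper's own proof, which uses exactly this endpoint computation), one has $\tau(\lambda_N) = n > m$, and $\eta(\lambda_N) \leq m$ by the choice of $j^\star$, so the maximal radius is itself $m$-stable. Thus this case is not degenerate and needs no separate convention; as written, your dispatch of it does not work, but the repair is immediate and brings your argument into line with the paper's. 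A last remark: the proposition asserts only existence, so the minimality discussion in your final paragraph, while harmless, is not needed here.
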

\begin{proof}
	Let $\Lambda$ be the set of $\lambda(v)$, as $v$ ranges among the vertices of the tropicalization $\plC$ of $C$.  Then $\Lambda$ inherits a total order from $\overline M_S$.  Every vertex of $\plC$ not in the circuit is the endpoint of exactly one incident vertex:  it is at least one because the graph is connected and at most one because any more would increase the genus beyond~$1$.  On the other hand, because the graph is semistable, every vertex outside the circuit is an endpoint of at least~$2$ edges, and therefore has at least~$1$ excident edge.  Both the number of incident edges, and the number excident edges, to the circle of radius $\delta$ are therefore increasing functions of $\delta \in \Lambda$.  For the maximal $\delta \in \Lambda$, the external valence is $n$, and for $\delta = 0$ the internal valence is $0$, so there must be some $m$-stable $\delta$ in between. 
\end{proof}

\subsection{The universal curves} 
\label{sec:univeral-curves}

Let $C$ be a radially aligned, semistable logarithmic curve over $S$ and let $\delta$ be a section of $\overline M_S$ that is comparable to the radii of $C$ (Definition~\ref{def:valence}).

\begin{proposition} \label{prop:C_delta}
There is a universal logarithmic modification $C_\delta \to C$ such that the sections $\lambda$ and $\delta$ of $\overline M_{C_\delta}$ are comparable.  The corresponding map on tropicalizations $\plC_\delta \to \plC$ subdivides the edges that are simultaneously incident to and excident from the circle of radius $\delta$ along the circle.
\end{proposition}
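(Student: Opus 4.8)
The plan is to reduce the statement to the representability of logarithmic blowups established in Section~\ref{sec:log-blowup}, exactly as in the proof of Proposition~\ref{prop:modification}. The functor to be represented assigns to a logarithmic scheme $T$ over $C$ the condition that the pullbacks of $\lambda$ and $\delta$ (both now sections of $\overline M_C$ over $C$, the latter pulled back from $S$) become locally comparable in $\overline M_T$. By the discussion following Definition~\ref{def:log-mod}, this is precisely the logarithmic blowup of the ideal of $\overline M_C$ generated by the two global sections $\lambda$ and $\delta$; hence it is representable by a logarithmic modification $C_\delta \to C$. That takes care of existence and universality with essentially no work, once one is careful that $\lambda \in \Gamma(C, \overline M_C)$ is a genuine global section — which it is, being the piecewise linear ``distance from the circuit'' function of Section~\ref{sec:aligned}, pulled back along $\pi$ after the construction of $\lambda_S$ is globalized — and that $\delta$ makes sense on $C$ by pulling back $\pi^\star \delta$.

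Next I would verify the claim about tropicalizations. Since the construction is local in the étale topology on $C$, and compatible with strict base change, it suffices to compute on an étale-local model, or equivalently to compute the induced subdivision of cone complexes. On a point $\Spec(P \to k)$ of $S$, the tropicalization $\plC$ is a tropical curve metrized by $P$; the edges of $\plC$ are the only places where $\overline M_C$ differs from $\pi^\star \overline M_S$, and along an edge $e$ from $v$ to $w$ (say $\lambda(v) < \lambda(w)$) the section $\lambda$ interpolates linearly with slope $1$ between the values $\lambda(v)$ and $\lambda(w)$, while $\delta$ is constant along $e$. Thus $\lambda$ and $\delta$ are automatically comparable along $e$ unless $\lambda(v) < \delta < \lambda(w)$ strictly — that is, unless $e$ is simultaneously incident to and excident from the circle of radius $\delta$ in the sense of Definition~\ref{def:valence} — in which case the locus $\{\lambda = \delta\}$ on $e$ is a single interior point. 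Imposing comparability therefore subdivides exactly those edges, at the point where $\lambda = \delta$, and leaves all other edges and all vertices untouched; this is the asserted subdivision $\plC_\delta \to \plC$.

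I expect the only real subtlety — and hence the part to write carefully — to be the globalization of $\lambda$ to a section over a general base $S$ and the compatibility of the construction of $C_\delta$ with the étale-local picture, i.e.\ checking that the logarithmic blowup of the ideal $(\lambda,\delta)$ really has the tropical effect described rather than some coarser or finer subdivision. Concretely, one must confirm that when the edge $e$ is subdivided, the new vertex carries genus $0$, that no spurious subdivision of the vertex-strata of $C$ occurs (it does not, because $\lambda$ is constant — equal to $\lambda(v)$ — on the component $C_v$, so on $C_v$ the comparison with $\delta$ is governed by the single relation $\lambda(v) \le \delta$ or $\delta \le \lambda(v)$, which by hypothesis already holds in $\overline M_S$), and that the identification of $C_\delta$ with $C \times_V W$ for the appropriate toric $W$ goes through as in Proposition~\ref{prop:modification}. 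Once these compatibilities are in place, the Proposition follows.
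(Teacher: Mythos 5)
Your proposal is correct and is essentially the paper's construction run in the opposite direction: the paper subdivides the tropicalization $\plC$ along the zero locus of $\delta-\lambda$ and observes that this subdivision induces the logarithmic modification $C_\delta$, while you start from the universal property (the logarithmic blowup of the ideal generated by $\lambda$ and $\delta$ in $\overline M_C$, i.e.\ universal local comparability) and then compute that its tropical effect is exactly the subdivision of the edges that are both incident and excident, at the point where $\lambda=\delta$. Your edge-by-edge check, including the observation that comparability already holds on the vertex components because $\delta$ is assumed comparable to the radii, is precisely the content the paper compresses into ``true by construction,'' so no gap remains.
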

\begin{proof}
Let $\plC$ be the tropicalization of $C$. The section $\delta-\lambda$ gives a map $\plC\to \RR$ in the obvious fashion. Subdivide $\plC$ along the preimage of $0\in \RR$. This subdivision of $\plC$ gives rise to a logarithmic modification $C_\delta$ of $C$. The conclusion about tropicalizations is true by construction.
\end{proof}


Apply the proposition with the values $\delta^m$ introduced at the end of Section~\ref{sec:circles-circuit}, to construct curves $C = \widetilde C_0, \ldots, \widetilde C_n$ over $\fM^{\mathrm{rad}}_{1,n}$, each of which is equipped with a stabilization $\widetilde C_i \to C$.


%

\subsection{Resolution of indeterminacy}

We define $\Mbar_{1,n}^{\mathrm{rad}}$ to make the following square cartesian:
\[
\begin{tikzcd}
\Mbar_{1,n}^{\mathrm{rad}}\arrow{d}\arrow{r} & \Mbar_{1,n} \arrow{d} \\
{\mathfrak M}^{\mathrm{rad}}_{1,n} \arrow{r} & \mathfrak M_{1,n}^{\log}
\end{tikzcd}
\]
As we have already seen, the bottom arrow is a logarithmic modification. As the pullback of a logarithmic modification is a logarithmic modification, $\Mbar_{1,n}^{\mathrm{rad}}$ is a logarithmic modification of $\Mbar_{1,n}$.

For each $m$, we construct a projection from $\Mbar_{1,n}^{\mathrm{rad}}$ to Smyth's moduli spaces $\overline{\mathcal M}_{1,n}(m)$ of $m$-prestable curves, resolving the indeterminacy of the map $\overline{\mathcal M}_{1,n} \dashrightarrow \overline{\mathcal M}_{1,n}(m)$.

\begin{theorem}\label{thm: contraction to smyth}
For each integer $m$ such that $0 \leq m \leq n$, there is a proper, birational morphism $\phi_m: \Mbar_{1,n}^{\mathrm{rad}} \to \overline{\mathcal M}_{1,n}(m)$.
\end{theorem}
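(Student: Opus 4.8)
The plan is to produce, functorially in the logarithmic base $S$, an $m$-stable $n$-pointed genus~$1$ curve from a stable radially aligned logarithmic curve over $S$, so that $\phi_m$ is read off from the modular description of Smyth's stack $\overline{\mathcal M}_{1,n}(m)$; properness and birationality then follow by comparison with the known geometry of source and target. Over a stable (hence semistable) radially aligned logarithmic curve $C\to S$, the smallest $m$-stable radius $\delta^m\in\overline M_S$ exists by Proposition~\ref{prop:m-stable}, yielding the semistable model $\widetilde C_m=C_{\delta^m}$ together with its stabilization $\widetilde C_m\to C$ constructed in Section~\ref{sec:univeral-curves}; pulling these back over $\Mbar_{1,n}^{\mathrm{rad}}$ produces a universal curve $\pi\colon\widetilde C_m\to\Mbar_{1,n}^{\mathrm{rad}}$.

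First I would build the contraction $\tau_m\colon\widetilde C_m\to\overline C_m$ collapsing the subcurve $\widetilde C_m^{\,\circ}\subseteq\widetilde C_m$ interior to the circle of radius $\delta^m$ described in Section~\ref{sec: strategy}; this subcurve is connected and of arithmetic genus~$1$, since it contains the circuit of $\plC_{\delta^m}$ and everything else hangs off it in trees. To realize $\tau_m$ in families I would use a relatively semiample line bundle $L_m$ on $\widetilde C_m$, built from $\omega_{\widetilde C_m/\Mbar_{1,n}^{\mathrm{rad}}}(\Sigma)$ by twisting by a piecewise linear function determined from the radial alignment and $\delta^m$ (for instance by $-\min(\lambda,\delta^m)$), arranged so that $L_m$ is trivial on $\widetilde C_m^{\,\circ}$ and of positive degree on every remaining component; the necessary degree computations are supplied by Lemma~\ref{lem:lambda-canonical} and Proposition~\ref{prop:line-bundle-degrees}. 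Then $\overline C_m := \Proj_{\Mbar_{1,n}^{\mathrm{rad}}}\bigoplus_{k\ge 0}\pi_\star L_m^{\otimes k}$ is a flat family of arithmetic genus~$1$ curves with a contraction $\tau_m$ of exactly $\widetilde C_m^{\,\circ}$ to a section of singular points; the cohomology-and-base-change input (vanishing of $R^1$ on the positive locus, with the genus~$1$ contracted part contributing the expected invertible summand of $R^1\pi_\star L_m^{\otimes k}$) is modelled on Corollary~\ref{cor:twisted-dualizing} applied to the complement of $\widetilde C_m^{\,\circ}$, and the arithmetic genus stays $1$ because collapsing a connected genus~$1$ subcurve to a point trades its genus for that of the resulting singularity.

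Next I would verify $(\overline C_m,p_1,\dots,p_n)$ is $m$-stable. Away from $\tau_m(\widetilde C_m^{\,\circ})$ the map $\tau_m$ is an isomorphism, so $\overline C_m$ is nodal there, and at $\tau_m(\widetilde C_m^{\,\circ})$ it acquires a single genus~$1$ singularity with $\ell$ branches, where $\ell$ is controlled by the inner valence of $\delta^m$ and so $\ell\le m$. The crux is that this singularity is \emph{Gorenstein}, hence is the elliptic $\ell$-fold point of the classification recalled in Section~\ref{sec: genus-1-singularities}: I would prove this by a local computation near $\tau_m(\widetilde C_m^{\,\circ})$ showing that $\omega_{\overline C_m}$ pulls back to $\omega_{\widetilde C_m}$ twisted by a divisor supported on $\widetilde C_m^{\,\circ}$, hence is invertible, invoking Proposition~\ref{prop:dualizing-trivial} together with the explicit local generators of $\omega$ at an elliptic $\ell$-fold point recorded in~\cite{RSW2}. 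Smyth's subcurve inequality is then automatic, since the $\tau_m$-preimage of any connected arithmetic genus~$1$ subcurve of $\overline C_m$ contains $\widetilde C_m^{\,\circ}$ and the count of its external markings and nodes is at least the outer valence of $\delta^m$, which exceeds $m$ by Definition~\ref{def:delta_m}; and $H^0(\overline C_m,\Omega_{\overline C_m}^{\vee}(\Sigma))=0$ because this vanishes for the stable curve $C$ and both $\widetilde C_m\to C$ and $\tau_m$ can only shrink it. By the modular description of $\overline{\mathcal M}_{1,n}(m)$, the family $\overline C_m\to\Mbar_{1,n}^{\mathrm{rad}}$ defines the morphism $\phi_m\colon\Mbar_{1,n}^{\mathrm{rad}}\to\overline{\mathcal M}_{1,n}(m)$.

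It remains to check $\phi_m$ is proper and birational. Since $\Mbar_{1,n}^{\mathrm{rad}}$ is a logarithmic modification of the proper Deligne--Mumford stack $\Mbar_{1,n}$, it is itself a proper Deligne--Mumford stack; as $\overline{\mathcal M}_{1,n}(m)$ is separated, $\phi_m$ is automatically proper. Over the locus of smooth $n$-pointed elliptic curves the logarithmic structure is trivial, so $\delta^m=0$, $\widetilde C_m=C=\overline C_m$, and $\phi_m$ restricts there to the tautological open immersion onto the locus of smooth $n$-pointed elliptic curves in $\overline{\mathcal M}_{1,n}(m)$; since this locus is dense in both $\Mbar_{1,n}^{\mathrm{rad}}$ (a modification of the irreducible $\Mbar_{1,n}$) and $\overline{\mathcal M}_{1,n}(m)$ (irreducible by~\cite{Smyth}), $\phi_m$ is birational. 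The principal obstacle in this plan is establishing the Gorenstein property of the contracted singularity; the remaining steps are degree bookkeeping on nodal curves and formal properties of logarithmic modifications.
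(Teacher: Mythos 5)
Your plan follows the paper's own route: existence of the smallest $m$-stable radius $\delta^m$ (Proposition~\ref{prop:m-stable}), the semistable model $C_{\delta^m}$ of Proposition~\ref{prop:C_delta}, contraction of the interior of the circle via $\Proj$ of the section ring of a line bundle that is trivial inside and positive outside (your $\omega_{\widetilde C_m/S}(\Sigma)\otimes\mathscr O(-\min(\lambda,\delta^m))$ agrees with the paper's $\mathscr O_{\widetilde C_m}(\mu)$, $\mu=\max\{\lambda,\delta^m\}$, up to a twist pulled back from the base, so the resulting $\Proj$ is the same), verification of $m$-stability, birationality over the locus where $\delta^m=0$, and properness from a proper source and separated target. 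However, the two steps you defer are precisely where the content of the paper's Section on contractions lies, and as sketched they are not yet proofs. First, flatness of $\pi_\star L_m^{\otimes k}$ (needed for $\overline C_m\to S$ to be flat, and for the fibers of the $\Proj$ to be the fiberwise contractions) is not a formal consequence of Corollary~\ref{cor:twisted-dualizing} applied to the complement of the contracted subcurve: the contracted locus $E$ is in general \emph{not flat} over its image $\Delta$, which is why the paper filters $E$ into flat pieces (Lemma~\ref{lem:cohom-kmu}) to identify $R^1\pi_\star\mathscr O(k\mu)$ with a twist of the dual Hodge bundle restricted to $\Delta$, and then uses the resulting two-term resolution in a base-change argument (Lemma~\ref{boundary}, Proposition~\ref{locallyfree}) to reduce to discrete valuation rings. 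You should also record the hypothesis that $\mathscr O_S\to\mathscr O_S(\delta^m)$ is injective, which holds because $\Mbar_{1,n}^{\mathrm{rad}}$ is logarithmically smooth, hence reduced; without it the injectivity of $\mathscr O(\lambda)\to\mathscr O(\mu)$ and the Cartier property of $E$ fail.

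Second, your proposed Gorenstein argument is circular as written: invoking the explicit local generators of $\omega$ at an elliptic $\ell$-fold point presupposes that the contracted singularity \emph{is} an elliptic $\ell$-fold point, which is exactly what Gorenstein-ness (together with genus~$1$ and $\ell\le m$ branches, via the uniqueness result of Section~\ref{sec: genus-1-singularities}) is meant to establish; moreover, speaking of ``$\omega_{\overline C_m}$ pulls back to $\omega_{\widetilde C_m}$ twisted by a divisor, hence is invertible'' is not meaningful before one knows $\omega_{\overline C_m}$ is a line bundle. The paper's mechanism in Proposition~\ref{contraction} is different and is what you would need: after reducing to a DVR base, show the fibers of $\overline C$ are reduced (generic reducedness plus (S1) from flatness), hence Cohen--Macaulay, so a relative dualizing sheaf exists, is (S2), and commutes with base change; it agrees with the invertible sheaf $\mathscr O_{\overline C}(1)$ (up to the markings twist) away from the codimension-two locus $\tau(E)$, hence is invertible everywhere, so the fibers are Gorenstein, and only then does the classification identify the singularity as the elliptic $\ell$-fold point. (Alternatively one can reduce to Smyth's contraction lemma when the total space is smooth near $E$, as the paper remarks.) With those two gaps filled, the rest of your outline -- the $m$-stability inequality via the outer valence in Definition~\ref{def:delta_m}, the finiteness of automorphisms, birationality, and properness -- goes through as you describe and matches the paper.
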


The main point of the proof is the construction of a contraction $\widetilde C_m \to \overline C_m$ where $\widetilde C_m$ is the curve defined in Section~\ref{sec:univeral-curves} and $\overline C_m$ is a Smyth $m$-stable curve.  The construction uses the section $\delta^m$ to produce a line bundle on $\widetilde C_m$ and then recognizes $\overline C_m$ as $\Proj$ of the section ring of this bundle. \medskip

\noindent
\textit{{\bf Notation:} We will hold $m$ fixed for the rest of this section, so we drop the subscript in what follows.}\medskip

\begin{definition} \label{def:mu}
Let $C$ be a radially aligned logarithmic curve over $S$ and let $\delta$ be a section of $\overline M_S$ that is comparable to the radii of $C$ (Definition~\ref{def:valence}).  Then, by construction of $C_\delta$ (Proposition~\ref{prop:C_delta}), $\lambda$ and $\delta$ are comparable sections of $\overline M_{C_\delta}$.  Therefore, there is a well-defined section $\mu = \max \{ \lambda, \delta \}$ on $C_\delta$.
\end{definition}

\begin{lemma} \label{lem:L-degree}
Assume that $C$ is a semistable logarithmic curve over $S$.  The degree of $\mathcal O_{\widetilde C}(\mu)$ is nonnegative on all components of all geometric fibers of $\widetilde C$ over $S$.  For all geometric points $s$ of $S$ and all components $\widetilde C_v$ of $\widetilde C_s$ such that $\lambda_s(v) < \delta_s$, the degree of $\mathcal O_{\widetilde C}(\mu)$ on $\widetilde C_v$ is zero.  If $v$ is not in the interior of the circle of radius $\delta_s$ then $L$ has positive degree on $\widetilde C_v$.
\end{lemma}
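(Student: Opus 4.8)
The statement is a purely combinatorial assertion about degrees of the line bundle $\mathcal O_{\widetilde C}(\mu)$ on the components of the subdivided curve $\widetilde C = C_{\delta}$, so the plan is to reduce everything to Proposition~\ref{prop:line-bundle-degrees}. First I would pass to a geometric point $s$ of $S$ so that $\overline M_S$ is a fixed sharp monoid and the dual graph $\plC = \plC_s$ is constant; by Proposition~\ref{prop:line-bundle-degrees} (applied after further specializing the smoothing parameters to $0$, which does not affect degrees) the degree of $\mathcal O_{\widetilde C}(\mu)$ on a component $\widetilde C_v$ is $\sum_e \mu_e$, the sum over flags $e$ rooted at $v$ of the outgoing slope $\mu_e$ of the piecewise linear function $\mu = \max\{\lambda,\delta\}$ along $e$. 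So the whole lemma becomes: compute these slopes case by case using that $\mu$ is the max of the ``distance from the circuit'' function $\lambda$ and the constant $\delta$ on the subdivided tropical curve $\plC_\delta$.

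**The three cases.** On the subdivided graph $\plC_\delta$, by construction $\lambda$ and $\delta$ are comparable at every vertex, and the new vertices created by Proposition~\ref{prop:C_delta} sit exactly where $\lambda = \delta$. I would organize the computation by the position of $v$ relative to the circle of radius $\delta$. (a) If $\lambda_s(v) \geq \delta_s$ (including the circle itself), then near $v$ the function $\mu$ equals $\lambda$, which has slope $-1$ on the single incident edge and slope $+1$ on each of the (at least one, and at least two if $C$ is stable) excident edges, by the computation already recorded in the proof of Lemma~\ref{lem:lambda-canonical} and in remark (C) of Section~\ref{sec:circles-circuit}; hence $\deg = -1 + (\text{number of excident flags}) \geq 0$, with strict positivity when $v$ is not interior to the circle (where there are $\geq 2$ excident edges, or $v$ is on the circuit and $\mu$ is constant there... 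I should treat the circuit component separately, see below). (b) If $\lambda_s(v) < \delta_s$, i.e. $v$ is strictly inside the circle, then $\mu$ is the constant $\delta_s$ on a neighborhood of $v$: along the incident edge (toward the circuit) $\lambda$ decreases so $\mu$ stays constant, slope $0$; along each excident edge $\lambda$ increases but stays $< \delta_s$ until one reaches a subdivision vertex on the circle, so again slope $0$. Therefore every flag contributes slope $0$ and $\deg \mathcal O_{\widetilde C}(\mu)$ on $\widetilde C_v$ is $0$, which is the middle claim. (c) The circuit component $C_0$: here $\lambda \equiv 0$, and since $\delta$ is comparable to all radii and $m \geq 0$ forces... actually $\delta_s \geq 0$ always, so $\mu = \delta_s$ is constant on (a neighborhood of) $C_0$ in $\widetilde C$, and its outgoing slopes along the edges leaving $C_0$ are all $0$ — so $\deg = 0$ on $C_0$ as well, consistent with ``$\lambda(v) < \delta$'' or with ``$v$ interior''; when $\delta_s = 0$ the circuit is not interior and one checks $\mu = \lambda$ there with degree... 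I would verify the degenerate edge case $\delta_s = 0$ does not create a genuine exception (then the circle is the circuit, inner valence $0$, and the positivity claim on non-interior vertices is the content of Lemma~\ref{lem:lambda-canonical} giving $\omega_\pi(\Sigma)$).

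**Main obstacle and bookkeeping.** The substantive content is entirely in getting the slope bookkeeping right across the subdivision — in particular making sure that ``$v$ not in the interior of the circle'' is exactly the condition under which some excident flag survives to give a strictly positive contribution, and handling the marked-point legs (each marking contributes a flag with $\lambda$-slope $+1$ and hence $\mu$-slope $+1$ when $\lambda(v) \geq \delta$, slope $0$ when $\lambda(v) < \delta$) uniformly with the edge flags. The only real danger is an off-by-one or a miscounting at the boundary vertices on the circle, where an edge can be simultaneously incident and excident in the unsubdivided picture; but after applying Proposition~\ref{prop:C_delta} those edges have been cut, so on $\widetilde C$ every flag at every vertex is unambiguously either ``toward the circuit'' (slope of $\lambda$ is $-1$) or ``away'' (slope $+1$), and the max with the constant $\delta$ then zeroes out precisely the flags lying strictly inside the circle. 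I would close by assembling: $\deg \geq 0$ everywhere (cases (a), (b), (c)), $\deg = 0$ when $\lambda_s(v) < \delta_s$ (case (b), together with the circuit in case (c)), and $\deg > 0$ when $v$ is not interior to the circle of radius $\delta_s$ (case (a) with the semistability lower bound on excident valence, plus Lemma~\ref{lem:lambda-canonical} for the extreme case $\delta_s = 0$), which is exactly the three assertions of the lemma. I expect this to be short once the reduction to Proposition~\ref{prop:line-bundle-degrees} is in place; the figure of the circle (Figure~\ref{fig: circle}) is the right mental picture throughout.
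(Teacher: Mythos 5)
Your argument is correct and essentially the paper's own: after reducing to a geometric fibre, the paper likewise reads off the degree of $\mathscr O_{\widetilde C}(\mu)$ vertex by vertex from the piecewise linear function $\mu=\max\{\lambda,\delta\}$ --- interior vertices give a constant, pulled-back bundle of degree $0$, exterior vertices give $\mathscr O(\lambda)\simeq\omega_{C/S}(\Sigma)$ via Lemma~\ref{lem:lambda-canonical}, and boundary vertices are handled by exactly the slope count you describe (the incident flag is zeroed by the maximum, and at least one excident flag has slope $+1$). Just make sure the correction in your final paragraph is what you actually invoke at vertices lying on the circle: there the incident flag has $\mu$-slope $0$, not $-1$ as in your initial case (a), and this is precisely how strict positivity is obtained from a single excident flag in the semistable setting.
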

\begin{proof}
It is sufficient to consider the case where the underlying scheme of $S$ is the spectrum of an algebraically closed field.  Let $\widetilde \plC$ be the tropicalization of $\widetilde C$.  If $v$ is in the interior of the circle of radius $\delta$ on $\widetilde \plC$ then by definition $\lambda(v) < \delta$ so $\mu(v) = \delta$.  Therefore the restriction of $L$ to $C_v$ is pulled back from $S$ and in particular has degree~$0$.  

If $v$ is in the exterior of the circle of radius $\delta$ then $\mu$ agrees with $\lambda$ at $v$ and we know from lemma~\ref{lem:lambda-canonical} that $\mathscr O_C(\lambda)$ has positive degree on $v$. Finally, if $v$ is on the boundary of the circle of radius $\delta$ then $v$ has exactly one incident edge and at least one excident edge.  But $\mu$ is constant on the incident edge, so the degree of $\mathscr O_C(\mu)$ is at least~$1$.
\end{proof}

\setcounter{subsubsection}{\value{theorem}}
\numberwithin{theorem}{subsubsection}
\numberwithin{equation}{subsubsection}
\subsubsection{The circuit}

For this section, assume that $C$ is a family of radially aligned semistable logarithmic curves over $S$, that $\delta$ is a section of $\overline M_S$ that is comparable to the radii of $C$, and that $\lambda$ and $\delta$ are comparable on $C$.  Let $\pi : C \to S$ be the projection.

Recall that we have defined $\mu$ to be the section $\max \{ \lambda, \delta \}$ on $C$.  Since $\lambda \leq \mu$, we have a morphism of invertible sheaves (see Section~\ref{sec:pwl} for the construction):
\begin{equation} \label{eqn:E}
i : \mathscr O_C(\lambda) \to \mathscr O_C(\mu)
\end{equation}

\setcounter{theorem}{\value{equation}}
\begin{definition} \label{def:E}
We write $E_\delta$ for the support of the cokernel and call it the \emph{circuit (of radius $\delta$)} in $C$.  Note that $E_\delta$ represents the subfunctor of $C$ where $\lambda < \delta$. We will suppress the subscript when it is clear from context.
\end{definition}

\begin{lemma}
Suppose that $\mathscr O_S \to \mathscr O_S(\delta)$ is injective.  Then $\mathscr O_C(\lambda) \to \mathscr O_C(\mu)$ is injective and $E_\delta$ is a Cartier divisor on $C$.
\end{lemma}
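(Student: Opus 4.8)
The plan is to verify both claims—injectivity of $\mathscr{O}_C(\lambda) \to \mathscr{O}_C(\mu)$ and the Cartier property of $E_\delta$—by reducing to a local calculation around each node of $C$, using Proposition~\ref{prop:line-bundle-degrees} and the explicit description of the morphisms attached to a piecewise linear inequality given in Section~\ref{sec:pwl}. Both assertions are \'etale-local on $C$ and compatible with base change, so I would first reduce to the case where $\underline{S}$ is the spectrum of an algebraically closed field (with a fixed section $\delta$, comparable to the radii, so that $\mu = \max\{\lambda,\delta\}$ is defined on all of $C$), and then work one component and one node at a time. The hypothesis ``$\mathscr{O}_S \to \mathscr{O}_S(\delta)$ is injective'' is exactly the statement that a local generator of the ideal $\varepsilon(M_S) \to \mathscr{O}_S$ corresponding to $\delta$ is a non-zero-divisor; this is the only place the hypothesis enters, and it is what rules out $\delta$ being killed by nilpotents when we multiply sections of $\mathscr{O}_C(\lambda)$ by the ``excess'' $\mu - \lambda$.

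**For injectivity**, recall from Section~\ref{sec:pwl} that $i : \mathscr{O}_C(\lambda) \to \mathscr{O}_C(\mu)$ is obtained by applying $\varepsilon : M_C \to \mathscr{O}_C$ to the difference $\mu - \lambda \geq 0$. On a component $C_v$ with $\lambda(v) \geq \delta$ we have $\mu(v) = \lambda(v)$, the two line bundles are canonically identified near the generic point of $C_v$, and $i$ restricts to an isomorphism on the interior $C_v^\circ$; so $i$ is injective away from the nodes, and it remains to check injectivity at each node $p$. At a node $p$ lying on $C_v$, the function $\mu - \lambda$ is a nonnegative piecewise linear function vanishing at $v$, hence is of the form $m\alpha$ (for $\alpha$ one of the two local coordinates of the node, $m \geq 0$) plus possibly a contribution pulled back from $S$—concretely, on the branch $C_v$ the map $i$ is multiplication by $\varepsilon(\tilde\alpha)^m$ times a pullback of $\varepsilon$ of a section of $M_S$ lying over a multiple of $\delta$. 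Since $\varepsilon(\tilde\alpha)$ is a local parameter on $C_v$ (hence a non-zero-divisor) and the pullback of the $\delta$-generator is a non-zero-divisor by the hypothesis $\mathscr{O}_S \to \mathscr{O}_S(\delta)$ injective together with flatness of $\pi$, their product is a non-zero-divisor, so $i$ is injective at $p$. I would organize this by distinguishing the cases $\lambda(v) \leq \delta$ (where on $C_v$ we have $\mu = \delta$ constant and $i$ is, up to the canonical trivialization of Proposition~\ref{prop:line-bundle-degrees}, multiplication by the pullback of the $\delta$-generator) from $\lambda(v) > \delta$ (where $\mu = \lambda$ near $v$ and $i$ is generically an isomorphism, with the node contributing a factor $x^m$ for the slope jump).

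**For the Cartier statement**, once $i$ is injective we have a short exact sequence $0 \to \mathscr{O}_C(\lambda) \to \mathscr{O}_C(\mu) \to \mathscr{Q} \to 0$ with $\mathscr{Q}$ supported on $E_\delta$; twisting by $\mathscr{O}_C(-\mu)$ turns this into $0 \to \mathscr{O}_C(\lambda - \mu) \xrightarrow{i} \mathscr{O}_C \to \mathscr{O}_{E_\delta} \to 0$, exhibiting $\mathscr{O}_C(\lambda-\mu) = \mathscr{O}_C(-E_\delta)$ as an invertible ideal sheaf cutting out $E_\delta$, which is precisely the assertion that $E_\delta$ is an effective Cartier divisor (noting, from Definition~\ref{def:E}, that set-theoretically $E_\delta$ is the locus where $\lambda < \delta$, i.e.\ the union of the interior components and appropriate node-loci). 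The content is therefore entirely in the injectivity of $i$ after the twist, which is the same computation as above.

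**The main obstacle** I anticipate is the bookkeeping at a node $p$ that sits on the boundary circle—i.e.\ on a component $C_v$ with $\lambda(v) = \delta$, or more precisely on an edge that the subdivision of Proposition~\ref{prop:C_delta} has cut along the circle—because there $\mu - \lambda$ vanishes to different orders along the two branches and one has to be careful that the local description of $i$ as multiplication by a monomial in the node coordinate times a pullback from $S$ is still valid, and that the relevant factor really is a non-zero-divisor on the branch in question. This is a finite case analysis (interior component, exterior component, component containing a node cut by the circle) driven by the local models of Proposition~\ref{prop:line-bundle-degrees} and the node normal form $\alpha + \beta = \delta_e$; I expect no genuine difficulty beyond keeping the slopes and the pulled-back-from-$S$ contributions straight.
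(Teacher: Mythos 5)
Your overall strategy (a local computation at the nodes, followed by twisting to exhibit $\mathscr O_C(\lambda-\mu)$ as an invertible ideal sheaf cutting out $E_\delta$) could be made to work, but two steps as written are genuine gaps. First, the opening reduction is not legitimate: injectivity of $i : \mathscr O_C(\lambda) \to \mathscr O_C(\mu)$ is a statement about the total space of the family and cannot be checked on geometric fibers. Worse, over a geometric point at which $\delta \neq 0$ in $\overline M_S$ the hypothesis that $\mathscr O_S \to \mathscr O_S(\delta)$ is injective automatically fails (over a point with nontrivial characteristic monoid, $\varepsilon$ annihilates every nonzero element of the characteristic), and the fiberwise analogue of the lemma is in fact false there: the restriction of $i$ to such a fiber vanishes identically on $E_\delta$, which is precisely why $E_\delta$ supports the cokernel. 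So ``reduce to $\underline S$ the spectrum of an algebraically closed field'' either trivializes the statement (forcing $\delta = 0$, $\mu = \lambda$, $E_\delta = \varnothing$) or asserts something false; the content of the lemma lives entirely in the family direction, the hypothesis saying that a local equation for the locus $\delta > 0$ in $S$ is a non-zero-divisor.

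Second, in the nodal analysis the claim that $\varepsilon(\tilde\alpha)$ is a non-zero-divisor ``because it is a local parameter on $C_v$'' conflates the component $C_v$ with $C$ itself: in the local model $\mathscr O_S[x,y]/(xy-t)$ the branch coordinate $x$ is a zero-divisor whenever $t$ is (for instance $t=0$), so injectivity on $C$ does not follow from this. What rescues your computation is an observation you do not make: at any node where $\mu - \lambda$ has nonzero slope, the node lies inside the circle, so its smoothing parameter satisfies $\delta_e \leq \delta$ in $\overline M_S$; hence $\varepsilon(\tilde\delta_e)$ divides (a unit times) the image of $\delta$ and is a non-zero-divisor on $S$ by the hypothesis, which in turn makes $x$ a non-zero-divisor in $\mathscr O_S[x,y]/(xy-t)$. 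The same divisibility argument is needed for your ``pulled back from $S$'' factor: that factor is $\varepsilon$ of an element $\gamma$ with $0 \leq \gamma \leq \delta$ (not a multiple of $\delta$, as you write), and it is a non-zero-divisor because it divides $\varepsilon(\tilde\delta)$. For comparison, the paper bypasses the entire case analysis with one global remark: since $\lambda \leq \mu \leq \lambda + \delta$, the composite $\mathscr O_C(\lambda) \to \mathscr O_C(\mu) \to \mathscr O_C(\lambda+\delta)$ is the $\lambda$-twist of the pullback of $\mathscr O_S \to \mathscr O_S(\delta)$, which is injective by flatness of $\pi$; injectivity of the first arrow follows a fortiori, and the Cartier assertion is then exactly your concluding twist.
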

\begin{proof}
Since $\lambda \leq \mu \leq \lambda + \delta$ we have a sequence of maps
\begin{equation*}
\mathscr O_C(\lambda) \xrightarrow{i} \mathscr O_C(\mu) \to \mathscr O_C(\lambda + \delta)
\end{equation*}
where the composition is a twist of the pullback of the injection
\begin{equation*}
\mathscr O_S \to \mathscr O_S(\delta) 
\end{equation*}
by $\lambda$. As $C$ is flat over $S$, this implies that $\mathscr O_C(\lambda) \to \mathscr O_C(\lambda + \delta)$ and, a fortiori, $\mathscr O_C(\lambda) \to \mathscr O_C(\mu)$ are injective.
\end{proof}


\begin{definition} \label{def:Delta}
	Let $\Delta_\delta$ (or $\Delta$, when the dependence on $\delta$ is evident) be the locus in $S$ where the map $\mathscr O_S(-\delta) \to \mathscr O_S$ vanishes.
\end{definition}

\begin{lemma} \label{lem:cohom-kmu}
Assume that $\mathscr O_S \to \mathscr O_S(\delta)$ is injective and that each fiber of $C$ contains at least one component not in the interior of the circle of radius $\delta$.
For all integers $k > 0$, we have
$R^1 \pi_\star \mathscr O_C(k \mu) = R^1 \pi_\star \mathscr O_E(k \delta) = \mathbf E^\vee_\Delta(k \delta)$ where $\mathbf E^\vee_\Delta$ is the restriction of the dual of the Hodge bundle of $C$ over $S$ to $\Delta$.
\end{lemma}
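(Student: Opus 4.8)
The plan is to compute $R^1\pi_\star$ of the short exact sequences relating $\mathscr O_C(k\mu)$, $\mathscr O_E(k\delta)$, and the circuit component, reducing everything to the triviality of the dualizing sheaf on the circuit (Proposition~\ref{prop:dualizing-trivial}) together with the vanishing statement in Corollary~\ref{cor:twisted-dualizing}. I may assume throughout that the underlying scheme of $S$ is the spectrum of an algebraically closed field, since the formation of $R^1\pi_\star$ of a line bundle on a flat proper curve commutes with base change once the relevant $R^1$ is locally free of the expected rank, and all the identifications below will be compatible with base change; then one passes to the general case by cohomology and base change.

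First I would analyze $\mathscr O_C(k\mu)$ componentwise. By Lemma~\ref{lem:L-degree}, $\mathscr O_{\widetilde C}(\mu)$ has degree zero on every component $\widetilde C_v$ with $\lambda(v)<\delta$ — that is, on the components lying in the interior of the circle, which by Definition~\ref{def:E} are exactly the components of $E=E_\delta$ — and positive degree on every component not in the interior. The interior of the circle contains the circuit, so $E$ is a connected genus~$1$ semistable curve (a subcurve of $\widetilde C_s$), and on it $\mu$ is constant equal to $\delta$, so $\mathscr O_{\widetilde C}(k\mu)\big|_E = \pi^\star\mathscr O_S(k\delta)$. On each component outside the interior, $\mathscr O_{\widetilde C}(k\mu)$ has strictly positive degree and, being a rational component of a semistable curve with the induced marked points/nodes, has vanishing $H^1$ (this is the same local computation as in Corollary~\ref{cor:twisted-dualizing}: combine degree $\geq 0$ on each exterior component with the fact that the exterior components form a forest attached to $E$). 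Dévissage along the normalization sequence that peels off the exterior components from $E$ — using that each attaching node contributes a $k_s$ that is hit by $H^0$ of the exterior side since those components have sections — then gives $R^1\pi_\star\mathscr O_C(k\mu) = R^1\pi_\star\bigl(\mathscr O_E(k\mu)\big|_E\bigr) = R^1\pi_\star \mathscr O_E(k\delta)$, which is the first claimed equality.

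Next I would identify $R^1\pi_\star\mathscr O_E(k\delta)$. Here $\mathscr O_E(k\delta)$ is just $\mathscr O_S(k\delta)$ pulled back to $E$ and restricted, i.e. a twist by the line bundle $\mathscr O_S(k\delta)$ on the base of the structure-sheaf pushforward $R^1\pi_\star\mathscr O_E$. By Proposition~\ref{prop:dualizing-trivial} the dualizing sheaf $\omega_{E/S}$ is trivial (the fiber $E_s$ is a Gorenstein genus~$1$ curve with no genus~$0$ tails — a smooth elliptic curve, a ring of rational curves, or an elliptic $m'$-fold point), so relative Serre duality gives $R^1\pi_\star\mathscr O_E \cong (\pi_\star\omega_{E/S})^\vee = (\pi_\star\mathscr O_E)^\vee$, which is a line bundle on $S$; and one checks that this line bundle is canonically the restriction to $S$ of the dual Hodge bundle $\mathbf E^\vee$ of $C/S$, since the circuit component carries the genus and $\omega_{C/S}$ restricts to $\omega_{E/S}$ there. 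Therefore $R^1\pi_\star\mathscr O_E(k\delta) \cong \mathbf E^\vee(k\delta)$. Finally, the restriction to $\Delta=\Delta_\delta$ enters because on the open locus of $S$ where $\delta$ is invertible (i.e. $\mathscr O_S(-\delta)\to\mathscr O_S$ is an isomorphism), the circle of radius $\delta$ degenerates: the hypothesis that every fiber has a component outside the interior, together with $\delta$ being comparable to all the radii, forces the interior to shrink so that either $E$ has arithmetic genus~$0$ there or the twist trivializes the $R^1$; in any case $R^1\pi_\star\mathscr O_E(k\delta)$ is supported on $\Delta$ and equals $\mathbf E^\vee_\Delta(k\delta)$.

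The main obstacle I anticipate is the bookkeeping in the dévissage of the second paragraph — namely checking carefully that peeling the exterior (tree-like) components off $E$ really does kill all of their $H^1$ and contributes nothing to $R^1\pi_\star$, which requires knowing that at each step the connecting map $H^0(\text{node}) \to H^1(\text{subcurve})$ behaves as in the proof of Corollary~\ref{cor:twisted-dualizing} (exterior components have global sections because their line bundle has nonnegative degree and they are rational), and the parallel point that the identification $R^1\pi_\star\mathscr O_E \cong \mathbf E^\vee_\Delta$ is canonical and base-change compatible rather than merely an abstract isomorphism. Everything else is a direct application of Proposition~\ref{prop:dualizing-trivial}, Corollary~\ref{cor:twisted-dualizing}, and Lemma~\ref{lem:L-degree}.
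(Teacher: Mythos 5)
Your proposal has a genuine gap, and it sits exactly where the paper has to work hardest. The reduction in your first paragraph to the case where $S$ is a geometric point is not available: $R^1\pi_\star\mathscr O_C(k\mu)$ is supported on $\Delta$ and its fiberwise rank jumps there (it vanishes off $\Delta$ and is $1$ on fibers over $\Delta$), so it is not locally free and cohomology-and-base-change does not let you reconstruct it from its geometric fibers. The content of the lemma is precisely the scheme-theoretic identification $R^1\pi_\star\mathscr O_C(k\mu)\simeq \mathbf E^\vee\otimes\mathscr O_\Delta(k\delta)$, where $\Delta$ carries the (possibly non-reduced) structure cut out by the vanishing of $\mathscr O_S(-\delta)\to\mathscr O_S$; geometric fibers cannot distinguish this from, say, the same sheaf supported on a different thickening of $\Delta_{\mathrm{red}}$. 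This relative structure is what gets used downstream: the resolution~\eqref{eqn:resolution} is an exact sequence of sheaves on $S$ whose exactness and locally free outer terms drive Lemma~\ref{boundary} and Proposition~\ref{locallyfree}, so a fiberwise computation of $h^1$ does not suffice.

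The second gap is the Serre-duality step on $E$. Relative duality with $\omega_{E/S}$ trivial requires $E$ to be flat over $\Delta$ with Gorenstein fibers, but $E$ is \emph{not} flat over $\Delta$ in general: over different points of $\Delta$ the interior of the circle of radius $\delta$ contains different numbers of components. The paper flags this explicitly, and its entire second half (the filtration of $E$ by the $E_i$ attached to the intermediate radii $\delta_i$, the verification that the graded pieces $D$ are flat over $\Delta_{\delta_{i+1}-\delta_i}$, and the inductive comparison of $\mathrm R\pi_\star\mathscr O_{C_{\delta_i}}$ with $\mathrm R\pi_\star\mathscr O_{E_i}$) exists precisely to get around this; only in the generic case of a single genus-$1$ component inside the circle is your duality argument legitimate, as the paper notes in a footnote. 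The correct relative identification instead proves $R^1\pi_\star\mathscr O_{\pi^{-1}\Delta}\to R^1\pi_\star\mathscr O_E$ is an isomorphism and then uses that $\pi^{-1}\Delta\to\Delta$ \emph{is} flat (being a base change of $C\to S$), which yields $\mathbf E^\vee_\Delta$. By contrast, your first equality is the easy part, and in fact the paper gets it relatively, not fiberwise, from the sequence $0\to\omega_{C/S}^{\otimes k}(k\Sigma)\to\mathscr O_C(k\mu)\to\mathscr O_E(k\delta)\to 0$ (using $\mathscr O_C(\lambda)\simeq\omega_{C/S}(\Sigma)$, $\mu=\delta$ on $E$, and Corollary~\ref{cor:twisted-dualizing}); your dévissage over a point only computes fiber dimensions and would still need a base-change argument you have not supplied.
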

\numberwithin{paragraph}{theorem}
\noindent\textit{Proof}.
\paragraph{}
Recalling that, by definition, $E$ is the locus where $\mathscr O_C(\lambda) \xrightarrow{i} \mathscr O_C(\mu)$ vanishes, we have an exact sequence:
\begin{equation*}
	0 \to \mathscr O_C(k \lambda) \xrightarrow{i^k} \mathscr O_C(k \mu) \to \mathscr O_E(k \mu) \to 0 
\end{equation*}
	Note that $i$ is injective because $\mu - \lambda \leq \delta$ and $\mathscr O_S \to \mathscr O_S(\delta)$ is injective.

As $\mathscr O_C(\lambda) = \omega_{C/S}(\Sigma)$ by Lemma~\ref{lem:lambda-canonical}, and as $\mu$ coincides with $\delta$ on $E$, this simplifies:
\begin{equation*}
0 \to \omega_{C/S}^{\otimes k}(k\Sigma) \to \mathscr O_C(k \mu) \to \mathscr O_E(k \delta) \to 0
\end{equation*}
	We have $R^2 \pi_\star \omega_{C/S}^{\otimes k}(k\Sigma) = 0$ because the fibers are $1$-dimensional and $R^1 \pi_\star \omega_{C/S}^{\otimes k}(k \Sigma) = 0$ because its fibers vanish by Corollary~\ref{cor:twisted-dualizing}.  The isomorphism $R^1 \pi_\star \mathscr O_C(k\mu) \simeq R^1 \pi_\star \mathscr O_E(k\delta)$ follows.

\paragraph{\textit{Filtering $E$ into flat pieces.}}
	To conclude the lemma, it remains to identify each of these with a twist of the dual of the Hodge bundle. We argue that the map
\setcounter{equation}{\value{paragraph}}
\numberwithin{equation}{theorem}
\begin{equation} \label{eqn:1}
R^1 \pi_\star \mathscr O_{C_\delta} \to R^1 \pi_\star \mathscr O_E
\end{equation}
\setcounter{paragraph}{\value{equation}}%
is an isomorphism, where $C_\delta = \pi^{-1} \Delta$.  This assertion is local in $S$, so we can assume that $S$ is an atomic neighborhood of a geometric point $s$ of $S$.  Since the tropicalization of $C_s$ is radially aligned, there is a sequence of radii given by
\begin{equation*}
0 = \delta_0 < \delta_1 < \cdots < \delta_{n} < \delta_{n+1} = \delta
\end{equation*}
given by the distance of the vertices from the elliptic component, terminating at $\delta$. For each $i$, define $\mu_i = \max \{ \lambda, \delta_i \}$ and define $E_i$ by the ideal $\mathscr O_C(\lambda - \mu_i) \subset \mathscr O_C$.  We take $\Delta_{\delta_i}$ to be the support of the cokernel of $\mathscr O_S \to \mathscr O_S(\delta_i)$. The pieces $E_i$ filter $E$ into pieces that are flat\footnote{In the generic case, where there is a unique genus $1$ component inside the circle $E$ is flat over $\Delta$ and the result follows from Serre duality.} over their images in $S$, noting that $E$ is not necessarily flat over its image, $\Delta$.

	We now prove~\eqref{eqn:1} by induction, with $E$ replaced by $E_i$ and $\Delta$ replaced by $\Delta_{\delta_i}$ for all $i$.  When $i = 0$, we have $E_i = \varnothing$ and $\Delta_{\delta_i} = \varnothing$, and the assertion is trivial.  The kernel of $\mathscr O_{E_{i+1}} \to \mathscr O_E$ is isomorphic to the cokernel of the canonical map $\mathscr O_C(\lambda - \mu_{i+1}) \to \mathscr O_C(\lambda - \mu_i)$.  Let $D$ be the support of this cokernel.  

\paragraph{\textit{Flatness of the pieces}}
	We claim that $D$ is flat over the locus $\Delta_{\delta_{i+1} - \delta_i} \subset S$. This can be seen explicitly as follows. By construction, $D$ is defined by the ideal $\mathscr O_C(\mu_i - \mu_{i+1}) \subset \mathscr O_C$. The central fiber of $D_s$ thus consists of those components of $C_s$ where $\lambda < \delta_{i+1}$. Furthermore, $\mu_i - \mu_{i+1}$ takes the constant value $\delta_i - \delta_{i+1}$ except at the nodes where $D_s$ is joined to the rest of $C_s$.  This implies that, away from those nodes, $D_s$ is defined by the preimage in $\mathscr O_C$ of the ideal $\mathscr O_S(\delta_i - \delta_{i+1}) \subset \mathscr O_S$ that defines $\Delta_{\delta_{i+1} - \delta_i}$. Thus $D$ is flat over the claimed locus, except possibly at the nodes where $D_s$ is joined to its complement in $C_s$. At such a node $p$ of $C_s$ with local equation $xy = t$, calculation shows that $\mathscr O_{C,p}$ is \'etale-locally isomorphic to $\mathscr O_{S,s}[x,y] / (xy - t, x) \simeq \mathscr O_{S,s}[y] / (t)$, which is flat over $\mathscr O_{S,s} / (t)$, as claimed.
    


\paragraph{\textit{Only the central component contributes cohomology.}}
	We introduce the notation $C_\gamma = \pi^{-1} \Delta_\gamma$ and consider the following commutative diagram with exact rows:
\begin{equation*} \xymatrix{
0 \ar[r] & \mathscr O_{C_{\delta_{i+1}-\delta_i}}(-\delta_i) \ar[r] \ar[d] & \mathscr O_{C_{\delta_{i+1}}} \ar[r] \ar[d] & \mathscr O_{C_{\delta_i}} \ar[r] \ar[d] & 0 \\
0 \ar[r] & \mathscr O_D(\lambda - \mu_i) \ar[r] & \mathscr O_{E_{i+1}} \ar[r] & \mathscr O_{E_i} \ar[r] & 0
} \end{equation*}
Pushing forward along $\pi$, we obtain the following diagram, again with exact rows:

\noindent\resizebox{\textwidth}{!}{$\displaystyle%
\xymatrix{
0 \ar[r] & \pi_\star \mathscr O_{C_{\delta_{i+1}-\delta_i}}(-\delta_i) \ar[r] \ar[d]^\alpha & \pi_\star \mathscr O_{C_{\delta_{i+1}}} \ar[r] \ar[d]^{\sigma_{i+1}} & \pi_\star \mathscr O_{C_{\delta_i}} \ar[r] \ar[d]^{\sigma_i} & R^1 \pi_\star \mathscr O_{C_{\delta_{i+1}-\delta_i}}(-\delta_i) \ar[r] \ar[d]^\beta & R^1 \pi_\star \mathscr O_{C_{\delta_{i+1}}} \ar[r] \ar[d]^{\varphi_{i+1}} & R^1 \pi_\star\mathscr O_{C_{\delta_i}} \ar[r] \ar[d]^{\varphi_i} & 0 \\
0 \ar[r] & \pi_\star \mathscr O_D(\lambda - \mu_i) \ar[r] & \pi_\star \mathscr O_{E_{i+1}} \ar[r] & \pi_\star \mathscr O_{E_i} \ar[r] & R^1 \pi_\star \mathscr O_D(\lambda - \mu_i) \ar[r] & R^1 \pi_\star \mathscr O_{E_{i+1}} \ar[r] & R^1 \pi_\star \mathscr O_{E_i} \ar[r] & 0
}$}

By induction on $i$, we show that $\sigma_i$ and $\varphi_i$ are isomorphisms by demonstrating that both $\alpha$ and $\beta$ are isomorphisms.

Our task is now to show that
\begin{equation}\label{eqn:2}
\mathrm R\pi_\star\mathscr O_{C_{\delta_{i+1}-\delta_i}}(-\delta_i) \to \mathrm R\pi_\star\mathscr O_{D}(\lambda - \mu_i)
\end{equation}
is a quasi-isomorphism.  Since both $\mathscr O_{C_{\delta_{i+1}-\delta_i}}(-\delta_i)$ and $\mathscr O_D(\lambda - \mu_i)$ are flat over $\Delta_{\delta_{i+1}-\delta_i}$, both $\mathrm R \pi_\star\mathscr O_{C_{\delta_{i+1}-\delta_i}}$ and $\mathrm R \pi_\star\mathscr O_D(\lambda - \mu_i)$ are representable by boudned complexes of locally free sheaves.  Nakayama's lemma shows that it is sufficient to verify this on the fibers. 

We are therefore to show that
\begin{equation} \label{eqn:3}
\mathrm R\pi_\star\mathscr O_{C_s}(-\delta_i) \to \mathrm R\pi_\star\mathscr O_{D_s}(\lambda-\mu_i)
\end{equation}
is an isomorphism, where $s$ is a geometric point of $\Delta_{\delta_{i+1}-\delta_i}$.  Let $E_0\subset D_s$ be the minimal closed genus~$1$ subcurve of $C_s$.  We claim that there are quasi-isomorphisms
\begin{align*}
\mathrm R\pi_\star\mathscr O_{C_s}(-\delta_i) & \to \mathrm R\pi_\star\mathscr O_{E_0}(-\delta_i)  &&\text{and}  \\
\mathrm R\pi_\star\mathscr O_{E_0}(-\delta_i) & \to  \mathrm R\pi_\star\mathscr O_{D_s}(\lambda-\mu_i)  
\end{align*}
commuting with~\eqref{eqn:3}. 

The first map is induced by the quotient $\mathscr O_{C_s} \to \mathscr O_{E_0}$. This induces an isomorphism on cohomology since $C_s$ and $E_0 \subset C_s$ are reduced, proper, connected curves of genus~$1$.

For the second map, we induct on $i$.  For each $j \in \{ 0, \ldots, i\}$, let $D_j$ be the union of components of $D_s$ where $\lambda \leq \delta_{j+1}$.  Recall that $D_{i} = D_s$ because $\lambda \leq \delta_{i+1}$ on $D_s$. We have $E_0 = D_0$ because $\delta_0 = 0$.  For each $j$, there is an exact sequence
\begin{equation*}
0 \to \mathscr O_{D_{j-1}}(\lambda-\mu_{j-1}-\delta_j+\delta_{j-1}) \to \mathscr O_{D_j}(\lambda - \mu_j) \to \mathscr O_{F_j}(\lambda - \mu_j) \to 0
\end{equation*}
where $F_j$ is the closure of the complement of $D_{j-1}$ in $D_j$.  Note that $F_j$ is a disjoint union of smooth rational curves.  Let $v$ be the vertex of the dual graph of $C_s$ corresponding to a component of $F_j$.  Since $v$ is on the boundary of the circle of radius $\delta_j$, the piecewise linear function $\lambda - \mu_j$ has outgoing slope $0$ along all but one of the edges incident to $v$.  The remaining edge connects $v$ to the interior of the circle of radius $\delta_j$ and therefore the outgoing slope of $\lambda - \mu_j$ is $-1$ along that edge.  It follows that, on each component of $F_j$, the restriction of $\mathscr O_{F_j}(\lambda-\mu_j)$ is $\mathscr O_{\mathbf P^1}(-1)$.  Thus $\mathrm R\pi_\star \mathscr O_{F_j}(\lambda-\mu_j) = 0$. It follows that $\mathrm R \pi_\star \mathscr O_{D_{j-1}}(\lambda-\mu_{i-1}-\delta_j+\delta_{j-1}) \simeq \mathrm R \pi_\star \mathscr O_{D_j}(\lambda-\mu_j)$.  The desired quasi-isomorphism is constructed by induction. 
\qed

\numberwithin{equation}{subsubsection}
\subsubsection{Flatness of the section ring}

We continue to assume that $C$ is a radially aligned logarithmic curve over $S$, that $\delta$ is a section of $\overline M_S$ comparable to the radii of $C$, and that $\mathscr O_S \to \mathscr O_S(\delta)$ is injective.

Under these assumptions, Lemma~\ref{lem:cohom-kmu} supplies a canonical resolution of $R^1 \pi_\star \mathscr O_C(k \mu)$:
\begin{equation} \label{eqn:resolution}
0 \to \mathbf E^\vee\bigl((k-1)\delta\bigr) \to \mathbf E^\vee(k\delta) \to \R^1 \pi_\star \mathscr O_C(k \mu) \to 0
\end{equation},
where $\mathbf E$ denotes the Hodge bundle.
Note that the injectivity on the left comes from the injectivity of $\mathscr O_S \to \mathscr O_S(\delta)$. We come to the key proposition necessary to contract radially aligned curves in families. 

\setcounter{theorem}{\value{equation}}
\begin{proposition}\label{locallyfree}
The sheaf $\pi_{\star} \mathscr O_C(k\mu)$ is locally free for all $k \geq 0$.
\end{proposition}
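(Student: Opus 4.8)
The plan is to extract local freeness of $\pi_\star\mathscr O_C(k\mu)$ from the canonical resolution~\eqref{eqn:resolution} of $\R^1\pi_\star\mathscr O_C(k\mu)$, together with the general principle that, for a proper morphism of relative dimension $\le 1$, the derived pushforward of an $S$-flat coherent sheaf is a perfect complex of amplitude $[0,1]$. The case $k=0$ is immediate, since the geometric fibres of $C\to S$ are connected, reduced, proper curves, so that $\pi_\star\mathscr O_C=\mathscr O_S$; hence assume $k>0$ below (here, as throughout this subsection, $\mu=\max\{\lambda,\delta\}$ is understood to be defined on the modification $C_\delta$ of Proposition~\ref{prop:C_delta}).

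First I would observe that $\mathscr O_C(k\mu)$, being a line bundle on the $S$-flat curve $C$, is flat over $S$; since $\pi$ is proper of relative dimension $\le 1$, the complex $\R\pi_\star\mathscr O_C(k\mu)$ is therefore perfect and may be represented, locally on $S$, by a two-term complex $[E^0\xrightarrow{d}E^1]$ of vector bundles placed in degrees $0$ and $1$. This gives an exact sequence
\[
0\to\pi_\star\mathscr O_C(k\mu)\to E^0\xrightarrow{d}E^1\to\R^1\pi_\star\mathscr O_C(k\mu)\to 0 .
\]
Now the resolution~\eqref{eqn:resolution} exhibits $\R^1\pi_\star\mathscr O_C(k\mu)$ as the cokernel of a map of line bundles $\mathbf E^\vee\bigl((k-1)\delta\bigr)\to\mathbf E^\vee(k\delta)$, hence a module of projective dimension $\le 1$ on $S$. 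Factoring $d$ as $E^0\twoheadrightarrow G\hookrightarrow E^1$ with $G=\image(d)=\ker\bigl(E^1\to\R^1\pi_\star\mathscr O_C(k\mu)\bigr)$, the short exact sequence $0\to G\to E^1\to\R^1\pi_\star\mathscr O_C(k\mu)\to 0$ gives $\Ext^i(G,M)\cong\Ext^{i+1}\bigl(\R^1\pi_\star\mathscr O_C(k\mu),M\bigr)=0$ for all $i\ge 1$ and all $M$, so $G$ is projective, and being coherent it is locally free. Finally $0\to\pi_\star\mathscr O_C(k\mu)\to E^0\to G\to 0$ is then locally split, so $\pi_\star\mathscr O_C(k\mu)$ is locally a direct summand of $E^0$, hence locally free; since local freeness is local on $S$, this completes the proof.

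I expect the only real subtlety — as opposed to a genuine obstacle — to be that $S$ is not assumed reduced, so that the naive approach through Grauert's theorem (deducing local freeness from constancy of $h^0$ on fibres) is not available; the perfect-complex argument above is exactly what circumvents this. The hypothesis that carries the argument is that $\mathscr O_S\to\mathscr O_S(\delta)$ is injective: this forces $\Delta$ (Definition~\ref{def:Delta}) to be locally cut out by a single equation, which is precisely what bounds the projective dimension of $\R^1\pi_\star\mathscr O_C(k\mu)$ by $1$ through~\eqref{eqn:resolution}. If the relevant locus instead required several equations, $\R^1\pi_\star\mathscr O_C(k\mu)$ could have larger projective dimension and the splicing argument — and indeed the statement itself — would fail, so this assumption is used in an essential way.
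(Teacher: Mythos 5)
Your argument is correct, but it takes a genuinely different route from the paper's. The paper proves the proposition by first invoking the base-change statement of Lemma~\ref{boundary} to reduce to the universal family over $\fM_{1,n}^{\mathrm{rad}}$, and then exploiting the geometry of that stack: the base is reduced, the injectivity of $\mathscr O_S \to \mathscr O_S(\delta)$ lets every point generize into the locus where $\delta = 0$, and pulling back along a discrete valuation ring joining such a generization to a given point (using Lemma~\ref{boundary} once more) reduces the claim to torsion-freeness of $\pi_\star\mathscr O_C(k\mu)$ over a DVR, whence constancy of rank and local freeness on a reduced base. You instead stay over an arbitrary $S$ satisfying the standing hypotheses and argue purely homologically: $\R\pi_\star\mathscr O_C(k\mu)$ is perfect of amplitude $[0,1]$, the resolution~\eqref{eqn:resolution} bounds the projective dimension of $\R^1\pi_\star\mathscr O_C(k\mu)$ by one, and the first-syzygy/splitting argument then exhibits $\pi_\star\mathscr O_C(k\mu)$ locally as a direct summand of a vector bundle. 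Both proofs ultimately rest on the same key input -- the injectivity of $\mathscr O_S \to \mathscr O_S(\delta)$ feeding into~\eqref{eqn:resolution}, which the paper uses inside the proof of Lemma~\ref{boundary} -- but your version needs neither the reduction to the universal case nor reducedness nor DVR generizations, so it is more self-contained and works verbatim over a non-reduced base; what it does not deliver by itself is the compatibility of $\pi_\star\mathscr O_C(k\mu)$ with base change, which Lemma~\ref{boundary} supplies and which is used again later (for instance in the proof of Proposition~\ref{contraction}). Your separate treatment of $k=0$ is indeed necessary, since~\eqref{eqn:resolution} comes from Lemma~\ref{lem:cohom-kmu} and is only available for $k>0$, and your correct identification of the representability of $\R\pi_\star$ by a two-term complex of vector bundles in degrees $0$ and $1$ is the standard fact for proper, flat, finitely presented families of curves with an $S$-flat coherent sheaf.
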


Away from $\Delta$, we can identify $\mathscr O_C(k\mu) \simeq \omega_{C/S}^{\otimes k}(k\Sigma)$, and we know that $\pi_{\star} \bigl(\omega_{C/S}^{\otimes k}(k\Sigma) \bigr)$ is locally free of the expected rank for all $k \geq 0$.  It therefore suffices to work near $\Delta$. The following lemma will allow us to reduce the proof of Proposition~\ref{locallyfree} to the case where $S$ is the spectrum of a discrete valuation ring.

\numberwithin{equation}{theorem}
\begin{lemma}\label{boundary}
	Let $T \to S$ be a morphism such that $\mathscr O_T(-\delta) \to \mathscr O_T$ is injective.  Then     
\[
f^{\star}\pi_{\star} \mathscr O_C(k \mu) =\pi_{\star}f^{\star} \mathscr O_C(k\mu).
\]
\end{lemma}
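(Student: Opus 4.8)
The plan is to deduce this from standard cohomology and base change for the proper morphism $\pi$; the one nontrivial input will be that the hypothesis on $T$ is exactly what keeps the resolution~\eqref{eqn:resolution} of $\R^1\pi_\star\mathscr O_C(k\mu)$ exact after pullback to $T$. To set up, recall that since $\pi$ is proper with one-dimensional fibers and the line bundle $\mathscr O_C(k\mu)$ is flat over $S$, the complex $\R\pi_\star\mathscr O_C(k\mu)$ is represented, locally on $S$, by a two-term complex $[E^0\xrightarrow{d}E^1]$ of locally free $\mathscr O_S$-modules in degrees $0$ and $1$, whose formation commutes with arbitrary base change. Writing $\pi':C_T\to T$ and $f':C_T\to C$ for the projections, the complex $[f^\star E^0\xrightarrow{f^\star d}f^\star E^1]$ then represents $\R\pi'_\star f'^\star\mathscr O_C(k\mu)$, so that $\pi_\star\mathscr O_C(k\mu)=\ker d$, $\pi'_\star f'^\star\mathscr O_C(k\mu)=\ker(f^\star d)$, and the canonical base change morphism is identified with the tautological map $f^\star\ker d\to\ker(f^\star d)$, which we must show is an isomorphism. (For $k=0$ this is trivial, because $\pi_\star\mathscr O_C=\mathscr O_S$ — the fibers being connected and reduced — and this commutes with any base change; so I assume $k\geq 1$.)

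The crucial step is that $f^\star$ preserves exactness of~\eqref{eqn:resolution}. Indeed, the cokernel $\Coker d=\R^1\pi_\star\mathscr O_C(k\mu)$ has, by Lemma~\ref{lem:cohom-kmu}, the length-one locally free resolution~\eqref{eqn:resolution}, whose nonzero arrow $\mathbf E^\vee((k-1)\delta)\to\mathbf E^\vee(k\delta)$ is obtained from the canonical section map $\mathscr O_S(-\delta)\to\mathscr O_S$ by twisting with the locally free sheaf $\mathbf E^\vee(k\delta)$. Applying $f^\star$ produces the twist by the locally free sheaf $f^\star\mathbf E^\vee(k\delta)$ of the canonical map $\mathscr O_T(-\delta)\to\mathscr O_T$, which is injective by hypothesis; so $f^\star$ of~\eqref{eqn:resolution} is again exact. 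Hence $\operatorname{Tor}_1^{\mathscr O_S}(\mathscr O_T,\R^1\pi_\star\mathscr O_C(k\mu))=0$, and, as $\R^1\pi_\star\mathscr O_C(k\mu)$ carries a locally free resolution of length $\leq 1$, also $\operatorname{Tor}_i^{\mathscr O_S}(\mathscr O_T,\R^1\pi_\star\mathscr O_C(k\mu))=0$ for all $i\geq 1$.

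It remains to run the standard bookkeeping. Factor $d$ as $E^0\twoheadrightarrow I\hookrightarrow E^1$ with $I=\image d$. From the exact sequence $0\to I\to E^1\to\R^1\pi_\star\mathscr O_C(k\mu)\to 0$ and the Tor-vanishing above we get $\operatorname{Tor}_i^{\mathscr O_S}(\mathscr O_T,I)=0$ for $i\geq 1$ together with exactness of $0\to f^\star I\to f^\star E^1\to f^\star\R^1\pi_\star\mathscr O_C(k\mu)\to 0$; in particular $f^\star I\hookrightarrow f^\star E^1$. From $0\to\ker d\to E^0\to I\to 0$ and $\operatorname{Tor}_1^{\mathscr O_S}(\mathscr O_T,I)=0$ we obtain exactness of $0\to f^\star\ker d\to f^\star E^0\to f^\star I\to 0$, so $f^\star\ker d=\ker(f^\star E^0\to f^\star I)=\ker(f^\star E^0\xrightarrow{f^\star d}f^\star E^1)$, the last equality holding because $f^\star I\hookrightarrow f^\star E^1$. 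Unwinding the identifications, this is the base change morphism, and we conclude $f^\star\pi_\star\mathscr O_C(k\mu)=\pi'_\star f'^\star\mathscr O_C(k\mu)$. The only substantive obstacle is the crucial step: observing that ``$\mathscr O_T(-\delta)\to\mathscr O_T$ is injective'' is precisely the condition needed for the base change along $f$ to remain independent of the torsion sheaf $\R^1\pi_\star\mathscr O_C(k\mu)$ (equivalently, for~\eqref{eqn:resolution} to stay exact after applying $f^\star$); the rest is formal cohomology and base change for a proper morphism with one-dimensional fibers.
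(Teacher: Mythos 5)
Your proof is correct and follows essentially the same route as the paper: both arguments reduce to the two-term locally free complex computing $\R\pi_\star\mathscr O_C(k\mu)$ compatibly with base change (Mumford's cohomology-and-base-change construction), and both use the hypothesis that $\mathscr O_T(-\delta)\to\mathscr O_T$ is injective to keep the length-one locally free resolution~\eqref{eqn:resolution} of $\R^1\pi_\star\mathscr O_C(k\mu)$ exact after pullback, killing the relevant higher Tor (the paper's $\L_1 f^\star$ and $\L_2 f^\star$) terms. Your explicit Tor bookkeeping via the image of $d$ is just a hands-on version of the paper's spectral-sequence/diagram chase, so the two proofs coincide in substance.
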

\begin{proof}
We write $L = \mathscr O_C(k\mu)$.

	Working locally near the image of $T$, the proof of cohomology and base change \cite[second Theorem, \S5, p.\ 46]{Mumford-AV} guarantees we can find $K^0, K^1$ finitely generated and locally free fitting into an exact sequence~\eqref{exact}
\begin{equation}\label{exact}
0\to\pi_\star L\to K^0\to K^1 \to \R^1\pi_\star L\to 0
\end{equation}
	such that~\eqref{eqn:exact2} is exact as well.
\begin{equation}\label{eqn:exact2}
0\to \pi_\star f^\star L\to f^\star K^0 \to f^\star K^1 \to \R^1\pi_\star f^\star L\to 0
\end{equation}

We show that the sequence
\[
0\to f^\star \pi_\star L \to f^\star K^0 \to f^\star K^1\to f^\star \R^1\pi_\star L \to 0
\]
is exact, from which it follows that $\pi_\star f^\star L \simeq f^\star \pi_\star L$ via the natural map.

We perform a derived pullback on the sequence~\eqref{exact} along $f$, yielding a spectral sequence $\L_pf^{\star}\R^q\pi_{\star}L$ converging to 0. A diagram chase shows that the obstructions to the desired isomorphism come from $\L_1f^{\star}\R^1\pi_{\star}L$ and $\L_2f^{\star}\R^1\pi_{\star}L$. We will use our explicit resolution of $\R^1\pi_{\star}L$ in equation~\eqref{eqn:resolution} to show that both of these groups vanish.  

Working locally, we rewrite the resolution~\eqref{eqn:resolution} as 
\[
	0 \to \mathscr O_S(-\delta) \xrightarrow{c} \mathscr O_S\to \R^1 \pi_\star L \to 0
\]
for some local section $c$ of $\mathscr O_S$ that vanishes along $\Delta$.  The pullback of this sequence to $T$ is exact by assumption.  Therefore $\L_1f^\star\R^1\pi_\star L$ and $\L_2f^\star\R^1 \pi_\star L$ both vanish, as required, and $f^\star\pi_\star L=\pi_\star f^\star L$.
\end{proof}

\begin{proof}[Proof of Proposition~\ref{locallyfree}]
	By Lemma~\ref{boundary}, it is sufficient to treat the universal case, where $S = \frak M_{1,n}^{\mathrm{rad}}$.  Since $\frak M_{1,n}^{\mathrm{rad}}$ is reduced, it suffices to prove that $\pi_\star \mathscr O_C(k\mu)$ has constant rank.  Since $\mathscr O_S \to \mathscr O_S(\delta)$ is injective, every point of $S$ has a generization where it restricts to an isomorphism (i.e., where $\delta = 0$).  If $s$ is a point of $S$, we can therefore find a scheme $T$, the spectrum of a discrete valuation ring, and a map $f : T \to S$ carrying the closed point to $s$ and the generic point to the complement of $\Delta$.  By Lemma~\ref{boundary}, the formation of $\pi_\star \mathscr O_C(k\mu)$ commutes with base change to $T$, so we can replace $S$ with $T$.

Now, $\mathscr O_C(k\mu)$ is torsion free, so $\pi_\star \mathscr O_C(k\mu)$ is also torsion free, hence flat because $S$ is the spectrum of a discrete valuation ring.
\end{proof}

\subsubsection{Contraction to $m$-stable curves}
We are now prepared to complete our contraction of radially aligned curves to the $m$-stable curves. Our argument is in the spirit of Smyth's contraction lemma \cite[Lemma 2.12]{Smyth}. The major difference in the present setting is that the extra datum of the circle of fixed radius allows us to promote Smyth's local construction to a global one.

Let $\pi : C \to S$ be a radially aligned, semistable, genus~$1$ logarithmic curve over $S$ and let $\delta$ be a section of $\overline M_S$ that is comparable to the radii of $C$.  Assume that $\mathscr O_S \to \mathscr O_S(\delta)$ is injective.  We collect from our earlier discussion
\begin{enumerate}
\item a section $\mu = \max \{ \lambda, \delta \}$ of $\overline M_C$ (Definition~\ref{def:mu});
\item a line bundle $\mathscr O_C(\mu)$ on $C$ (Definition~\ref{def:mu});
\item a Cartier divisor $E$ on $C$ (Definition~\ref{def:E}), the locus in $C$ where $\lambda < \delta$;
\item a divisor $\Delta$ on $S$ (Definition~\ref{def:Delta}), the locus in $S$ where $\delta > 0$; and
\item that $\pi_{\star} \mathscr O_C(k\mu)$ is locally free for all $k \geq 0$ (Lemma~\ref{boundary}).
\end{enumerate}


\begin{proposition}\label{contraction}
Given the above situation, $\mathscr O_C(\mu)$ is $\pi$-semiample and we have a diagram
\[
\begin{tikzcd}
	C \arrow[dr,"\pi"'] \arrow{rr}{\tau} & & \overline C:=\smash{\mathrm{Proj} \bigl( \sum_{k\geq 0} \pi_{\star} \mathscr O_C(k\mu)\bigr) \arrow[start anchor={south west}]{dl}{\overline\pi}}\\
&S &
\end{tikzcd}
\]
with $\tau$ proper, birational, with exceptional locus $E$. Furthermore, 
\begin{enumerate}
\item $\overline\pi:\overline C\to S$ is flat and projective with reduced fibers;
\item $\tau|_{\overline{{C_s}\setminus E_s}}:\overline{{C_s}\setminus E_s}\to \overline C_s$ is the normalization of $\overline C_{s}$ at $\tau(E_{s})$ for each fiber over each geometric point $s$ of $S$;
\item $\tau(E_{s})$ is an elliptic $m$-fold point in each $\overline C_{s}$ over each geometric point $s$ of $S$, and $\overline C\to S$ together with the image of $\Sigma$ is an $m$-stable curve in the sense of Smyth.
\end{enumerate}
\end{proposition}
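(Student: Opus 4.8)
The plan is to reduce everything to a statement about the fibers $C_s$ and then invoke the structure of Gorenstein genus~$1$ singularities from Section~\ref{sec: genus-1-singularities}. First I would establish $\pi$-semiampleness of $\mathscr O_C(\mu)$: by Lemma~\ref{lem:L-degree}, $\mathscr O_C(\mu)$ has nonnegative degree on every component of every geometric fiber, and zero degree exactly on the components interior to the circle of radius $\delta$. On a fiber $C_s$, the locus where $\mathscr O_{C_s}(\mu)$ has degree zero is precisely $E_s$, a connected proper curve of arithmetic genus~$1$ (the circuit together with everything strictly inside the circle); on this curve $\mathscr O_{C_s}(\mu)$ restricts to $\mathscr O_{C_s}(k\delta)|_{E_s} = \mathscr O_{E_s}$, which is trivial and hence globally generated. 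On the remaining components the degree is positive. A high multiple $k\mu$ is therefore base-point free on each fiber — one checks global generation component by component using that $\omega_{C_s}(\Sigma)$ is globally generated on the non-circuit tree (its pieces are $\mathscr O_{\mathbf P^1}(\ell)$ with $\ell \geq 0$ by semistability) and glued against the trivial bundle on $E_s$ — and by Proposition~\ref{locallyfree} together with cohomology and base change this gives $\pi$-semiampleness of $\mathscr O_C(\mu)$, so that $\overline C = \Proj\bigl(\bigoplus_{k\geq 0}\pi_\star\mathscr O_C(k\mu)\bigr)$ is defined and $\overline\pi$ is projective. Flatness of $\overline\pi$ follows from Proposition~\ref{locallyfree}: each $\pi_\star\mathscr O_C(k\mu)$ is locally free, so the section ring is a flat $\mathscr O_S$-algebra, and $\tau$ is the canonical map, proper and birational with exceptional locus exactly $E$ (this being the degree-zero locus of the polarization).

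Next I would identify the fibers of $\overline\pi$. The formation of $\overline C$ commutes with base change to a geometric point $s$: this is where Lemma~\ref{boundary} and Proposition~\ref{locallyfree} are used, since local freeness of $\pi_\star\mathscr O_C(k\mu)$ guarantees $\pi_\star\mathscr O_C(k\mu)\otimes\kappa(s) = H^0(C_s,\mathscr O_{C_s}(k\mu))$ and hence $\overline C_s = \Proj\bigl(\bigoplus_k H^0(C_s,\mathscr O_{C_s}(k\mu))\bigr)$. Thus $\overline C_s$ is the image of $C_s$ under the linear system $|k\mu|$ for $k\gg 0$. Since $\mathscr O_{C_s}(\mu)$ has positive degree off $E_s$ and is trivial on $E_s$, the map $\tau_s : C_s \to \overline C_s$ is an isomorphism away from $E_s$ and contracts $E_s$ to a single point $q = \tau(E_s)$; this gives reducedness of the fibers (a contraction of a reduced curve along a connected subcurve to a point is again reduced) and statement~(2), that $\tau|_{\overline{C_s\setminus E_s}}$ is the partial normalization of $\overline C_s$ at $q$, with the $m$ preimages of $q$ being the $m$ "external nodes" where $E_s$ met its complement — here $m$ is the outer valence of the circle of radius $\delta^m$, which equals the number of excident edges, i.e. the number of branches of $C_s\setminus E_s$ meeting $E_s$ (using that each non-circuit vertex on the boundary of the circle has exactly one incident edge, by the remarks following Definition~\ref{def:valence}).

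The heart of the matter — and the step I expect to be the main obstacle — is statement~(3): that $q$ is an elliptic $m$-fold point and that $(\overline C,\Sigma)$ is $m$-stable. For the singularity type, I would compute the complete local ring $\widehat{\mathscr O}_{\overline C_s, q}$. It is the subring of $\prod_{i=1}^m \widehat{\mathscr O}_{\overline{C_s\setminus E_s}, p_i}$ (one factor per branch) consisting of functions that extend across the contracted curve $E_s$; concretely, a tuple of germs lies in this ring precisely when it arises by restriction of a section of $\mathscr O_{C_s}(k\mu)$ near $E_s$ (for varying $k$, taking ratios to land in $\mathscr O$), and by Lemma~\ref{lem:cohom-kmu} the relevant cohomology obstruction $R^1\pi_\star\mathscr O_{C_s}(k\mu) = H^1(E_s,\mathscr O_{E_s})$ is one-dimensional. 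This one-dimensionality forces the seminormalization-genus of $q$ to be exactly $1$: the cokernel $\mathscr A_q/\widehat{\mathscr O}_{\overline C_s, q}$ is identified with $H^1(E_s,\mathscr O_{E_s})$, which has dimension $1$ since $E_s$ has arithmetic genus~$1$. Since $\overline C$ is Gorenstein — the dualizing sheaf $\omega_{\overline C/S}$ descends from $\omega_{C/S}$ along the contraction, which one verifies using that $\tau$ contracts a curve $E_s$ with $\omega_{C_s}|_{E_s}$ having the right numerical properties, as in Smyth's \cite[Lemma~2.12]{Smyth} — Proposition~\ref{prop:dualizing-trivial}'s source (the classification of Gorenstein genus~$1$ singularities) identifies $q$ as the unique such singularity with $m$ branches, the elliptic $m$-fold point. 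Finally, $m$-stability in Smyth's sense: condition~(1) holds since the only singularities introduced are this one $q$ plus the nodes already present; condition~(2), the inequality $|E\cap\overline{C\setminus E}| + |E\cap\Sigma| > m$ for every genus~$1$ subcurve $E$, is exactly the condition that the outer valence of the circle of radius $\delta^m$ exceeds $m$ (Definition~\ref{def:delta_m}(ii)), translated through the contraction; and condition~(3), vanishing of $H^0(\overline C,\Omega^\vee_{\overline C}(\Sigma))$, follows because contracting rational bridges and tails only rigidifies, and the $m$-fold point has no infinitesimal automorphisms fixing $\Sigma$ when $m \geq 1$ — this can be checked directly from the local model or deduced from Smyth's analysis. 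The delicate points are the descent of the dualizing sheaf (establishing the Gorenstein property in families, not just on fibers) and the precise bookkeeping matching "outer valence $> m$" with Smyth's subcurve inequality across the contraction; both are local computations at $q$ in the spirit of \cite[Lemma~2.12]{Smyth}, now made uniform over $S$ by the existence of the globally defined circle of radius $\delta^m$.
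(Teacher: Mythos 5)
Your argument hinges on a base-change claim that fails precisely where the proposition has content. You assert that $\pi_\star\mathscr O_C(k\mu)\otimes\kappa(s) = H^0(C_s,\mathscr O_{C_s}(k\mu))$, and hence that $\overline C_s = \Proj\bigl(\bigoplus_k H^0(C_s,\mathscr O_{C_s}(k\mu))\bigr)$, citing Lemma~\ref{boundary} and Proposition~\ref{locallyfree}. But the hypothesis of Lemma~\ref{boundary} --- injectivity of $\mathscr O_T(-\delta)\to\mathscr O_T$ --- fails when $T$ is a geometric point of $\Delta$ (the map is zero there), and local freeness of $\pi_\star\mathscr O_C(k\mu)$ alone does not give base change: by Lemma~\ref{lem:cohom-kmu}, $R^1\pi_\star\mathscr O_C(k\mu)$ is supported on $\Delta$, so $h^0(C_s,\mathscr O_{C_s}(k\mu))$ jumps along $\Delta$ and the fiber of $\pi_\star\mathscr O_C(k\mu)$ at $s\in\Delta$ is a proper (codimension-one) subspace of $H^0(C_s,\mathscr O_{C_s}(k\mu))$. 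This is not a technicality. If $\overline C_s$ were the image of $C_s$ under its own complete linear series, the contracted point would have local ring the full ring of branch-germs agreeing at the origin, a genus-$0$ singularity with $\delta$-invariant $m-1$, and the arithmetic genus of the fibers would drop, contradicting flatness of $\overline\pi$. The elliptic $m$-fold point --- and the fact that it has moduli depending on the smoothing family, as emphasized in Section~\ref{sec: strategy} --- arises exactly from the extra linear condition cutting out the limit sections, so your computation of the complete local ring at $q$ and the identification of $\mathscr A_q/\mathscr O_{\overline C_s,q}$ with $H^1(E_s,\mathscr O_{E_s})$ do not follow from the fiberwise description you give. The same issue weakens your semiampleness step: fiberwise basepoint-freeness does not imply surjectivity of $\pi^\star\pi_\star\mathscr O_C(k\mu)\to\mathscr O_C(k\mu)$ unless the needed sections on the fiber lift to $\pi_\star$, which is exactly the point at issue; the paper produces such a section at $k=1$ via the surjection $\pi_\star\mathscr O_C(\mu)\to\pi_\star\mathscr O_E(\delta)$ coming from $R^1\pi_\star\bigl(\omega_{C/S}(\Sigma)\bigr)=0$ (Corollary~\ref{cor:twisted-dualizing}).

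The paper's proof is structured to avoid any fiberwise $\Proj$ identification: after semiampleness it invokes Lemma~\ref{boundary} only for a map from a DVR whose generic point lies off $\Delta$ (so the injectivity hypothesis holds), reduces to $S$ a DVR, and then argues on the total space --- $C$ is normal near $E$; the fiber $\overline C_\Delta$ is a Cartier divisor in $\overline C$, hence (S1), and generically reduced, hence reduced; $\omega_{\overline C/S}$ exists, is (S2), commutes with base change, and agrees with a line bundle away from the codimension-two locus $\tau(E)$, hence is a line bundle, so the fibers are Gorenstein; and constancy of arithmetic genus in the flat family then identifies $\tau(E_s)$ as the unique Gorenstein genus-$1$ singularity with $m$ branches. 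Your remark that the dualizing sheaf ``descends as in Smyth'' gestures at this but is not an argument. To repair your proposal you would need to replace the fiberwise linear-series description with an analysis of the flat limit (or of the total space over a DVR) along these lines.
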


\begin{proof}
We know that $\pi_{\star} \mathscr O_C(k\mu)$ is locally free for all $k \geq 0$ by Proposition \ref{locallyfree}, so $\overline C\to S$ is flat. 

Observe that $\mathscr O_C(\mu)$ being $\pi$-semiample is equivalent to the surjectivity of the adjunction map
\[
\pi^\star \pi_{\star} \mathscr O_C(k\mu) \to \mathscr O_C(k\mu)
\]
for $k$ sufficiently large. Note that we have that $\mathscr O_C(k\mu)$ is ample on generic fibers, and over $\Delta$,
\[
\mathscr O_E(k\mu)\simeq \mathscr{O}_E \ \  \text{and} \ \ \mathscr O_{C_s \setminus E}(k\mu) \textnormal{ is ample. }
\]
We must argue that, for every $x \in C$, there is some $k \geq 0$ and a section of $\mathscr O_C(k\mu)$ that does not vanish at $x$, at least in a neighborhood of $\pi(x)$ on $S$.  Since $\mathscr O_C(\mu)$ coincides with $\omega_{C/S}(\Sigma)$ over $S \setminus \Delta$, and $\omega_{C/S}(\Sigma)$ is semiample on $C$, this presents no obstacle away from $\Delta$.  Even over $\Delta$, the restriction of of $\mathscr O_C(\mu)$ to the complement of $E$ agrees with $\omega_{C/S}(\Sigma)$ on components that do not meet $E$, and with $\omega_{C/S}(\Sigma-p)$ on a component attached at $p$ to $E$.  Since $C$ is semistable, $\omega_{C/S}(\Sigma-p)$ has degree $\geq 0$ on such a component.

It remains to argue that if $x \in E$ then $\mathscr O_C(k\mu)$ has a section that does not vanish at $x$, at least for sufficiently large $k$.  In fact, we will find the required section when $k = 1$.  Since $\mu \geq \lambda$, we have an exact sequence:
\begin{equation*}
0 \to \mathscr O_C(\lambda) \to \mathscr O_C(\mu) \to \mathscr O_E(\delta) \to 0
\end{equation*}
	Pushing forward to $S$, using the isomorphism $\mathscr O_C(\lambda) \simeq \omega_{C/S}(\Sigma)$ (Lemma~\ref{lem:lambda-canonical}), and the vanishing of $R^1 \pi_\star \bigl( \omega_{C/S}(\Sigma) \bigr)$ (Corollary~\ref{cor:twisted-dualizing}), we get a surjection:
\begin{equation*}
\pi_\star \mathscr O_C(\mu) \to \pi_\star \mathscr O_E(\delta)
\end{equation*}
We can certainly find a neighborhood of $\pi(x)$ and a section of of $\pi_\star \mathscr O_E(\delta)$ that does not vanish at $x$, so the surjectivity implies the same applies to $\pi_\star \mathscr O_C(\mu)$.  This proves the semiampleness.

From $\pi$-semiampleness, we get the finite generation of the section ring of $\mathscr O_C(\mu)$ \cite[Example~2.1.30]{Lazarsfeld} and a proper, birational projection $\tau : C \to \overline C$.  From the triviality of $\mathscr O_C(\mu)$ on $E$, and the ampleness elsewhere, we see that the exceptional locus of $\tau$ is $E$.  For the remaining claims, which only concern the fibers of $\overline\pi$, we can assume that $S$ is the spectrum of a discrete valuation ring, since the by Lemma~\ref{boundary}, the construction commutes with base change to a discrete valuation ring satisfying the same hypotheses as $S$.

If the total space of $C$ is smooth at the points where $E$ meets the closure of $C \setminus E$ then we may apply Smyth's contraction lemma \cite[Lemma~2.13]{Smyth} to conclude.  It is possible to reduce to this case by replacing $C$ with a semistable model, but we will argue directly for clarity.

Now assuming that $S$ is the spectrum of a discrete valuation ring, note that $S$ is irreducible and normal. Moreover, $C$ is regular in codimension one (R1) since $C\to S$ has smooth generic fiber and has isolated singularities in fibers.  Since the fibers of $C_s$ over $S$ are reduced curves, they are (S2)~\cite[Remarques~IV.5.7.8]{EGA}.  Now $C\to S$ is flat, and $S$, being the spectrum of a discrete valuation ring, is certainly (S2).  Therefore the total space of $C$ is (S2)~\cite[Proposition~IV.6.8.3]{EGA}.  Since $C$ is smooth away from codimension $2$ in a neighborhood of $E$, it is (R1), and therefore $C$ satisfies Serre's criterion for normality near $E$.

We argue that $\overline C$ is reduced.  The components of $C_\Delta\setminus E$ map birationally to the components of $\overline C_\Delta$.  As $C_\Delta$ is reduced, $\overline C_\Delta$ is generically reduced.  On the other hand, flatness implies that the fiber $\overline C_\Delta$ is a Cartier divisor in $\overline C$, and is therefore (S1).  In particular, $\overline C_\Delta$ has no embedded points.  We conclude that $\overline C_\Delta$ is reduced.

The same argument we used on $C$ now implies that $\overline C$ is normal.  As $\tau$ certainly has connected fibers, and both $C$ and $\overline C$ are reduced, we obtain $\tau_\star \mathscr O_C = \mathscr O_{\overline C}$.

Furthermore, if $D$ is the closure of $C_\Delta \setminus E$ then $D$ is smooth at the points of $D \cap E$.  As $D \to \overline C_\Delta$ is birational, it follows that $D$ is the normalization of $\overline C_\Delta$ at $\phi(E)$.  This completes the proof of the third claim.

Finally, we verify that $\tau(E)$ is an elliptic $m$-fold point of $\overline C_\Delta$.  Since $C$ and $\overline C$ are generically isomorphic, they have the same arithemtic genus.  Therefore it suffices to show that $\overline C$ is Gorenstein.

Reduced fibers implies Cohen-Macaulay fibers, and any flat, projective, finitely presented morphism $C\to S$ whose geometric fibers are Cohen-Macaulay admits a relative dualizing sheaf \cite[Theorem 21]{kleiman1980relative} whose formation commutes with base change \cite[Proposition 9]{kleiman1980relative}, and the relative dualizing sheaf is (S2) \cite[Corollary 5.69]{kollar2008birational}.  It will therefore suffice to show that $\omega_{\overline C/S}$ is isomorphic to a line bundle in codimension one, since on a reduced scheme of finite type over a field (S2) sheaves isomorphic in codimension one are isomorphic \cite[Lemma 5.1.1]{FvdGL11}. To see this, note that 
\[
\mathscr{O}_{\overline C}(1)\rest{\overline C\setminus \tau(E)} \cong \omega_{\overline C/S}(\Sigma)\rest{\overline C\setminus \tau(E)}.
\]
Note $\tau(E)$ is the exceptional image and it is codimension 2, so this is an isomorphism in codimension one by definition. So we have shown that the relative dualizing sheaf on $\overline C$, which commutes with base extension, is isomorphic to a line bundle $\mathscr{O}_{\overline C}(1)$ near $\tau(E)$. In particular the fibers are Gorenstein curves. The fact that the fibers are stable in the sense of Smyth is immediate from our stability condition, so we have proved (3).
\end{proof}

\subsection*{Proof of Theorem \ref{thm: contraction to smyth}}
Now that we have developed the machinery for contracting a radially aligned log curve to an $m$-stable curve in the sense of Smyth, we finish the proof of Theorem \ref{thm: contraction to smyth}. 

\begin{proof}
We take $S = \Mbar^{\rm rad}_{1,n}$.  Let $\delta^m$ be as in Definition~\ref{def:delta_m}, and let $\widetilde C_m = C_{\delta^m}$ be as in Proposition~\ref{prop:C_delta}.  Note that $\mathcal O_S \to \mathcal O_S(\delta^m)$ is injective, because $\Mbar^{\rm rad}_{1,n}$ is logarithmically smooth.  We apply Proposition~\ref{contraction} to obtain a contraction $\widetilde C_m \to \overline C_m$.  As $\overline C_m$ is an $m$-stable curve in the sense of Smyth, this gives a map $\Mbar^{\mathrm{rad}}_{1,n} \to \Mbar_{1,n}(m)$.  When $\delta^m = 0$, the maps $C_m \to C$ and $C_m \to \overline C_m$ are isomorphisms, so our map is birational.
\end{proof}

\numberwithin{theorem}{section}
\section{The stable map spaces}\label{sec: vz-construction}

Let $Y$ be a variety over the complex numbers equipped with the trivial logarithmic structure. Let $\Mbar_{1,n}(Y,\beta)$ be the moduli space of stable $n$-pointed genus $1$ stable maps to $Y$, with image curve class $\beta$. By forgetting the map, we obtain a morphism
\[
\Mbar_{1,n}(Y,\beta)\to \fM_{1,n}
\]
to the stack of $n$-pointed prestable curves of genus $1$.  

Let $\fM_{1,n}^{\mathrm{rad}}$ be the moduli space of minimal families of radially aligned genus~$1$ logarithmic curves $\pi : C \rightarrow S$. We define $\widetilde{\VZ}_{1,n}(Y,\beta)$ to be the stack making the following diagram cartesian:
\[
\begin{tikzcd}
\widetilde{\VZ}_{1,n}(Y,\beta) \arrow{d}\arrow{r} & \Mbar_{1,n}(Y,\beta)\arrow{d} \\
\fM_{1,n}^{\mathrm{rad}} \arrow{r} & \mathfrak M_{1,n}.
\end{tikzcd}
\]
By definition $\widetilde{\VZ}_{1,n}(Y,\beta)$ parameterizes the following data over a logarithmic scheme $S$:
\begin{enumerate}
\item a logarithmic curve $C$ over $S$ having genus $1$ and $n$ marked points, together with a radial alignment of the tropicalizations;
\item a stable map $C \rightarrow Y$ of homology class $\beta$.
\end{enumerate}

Consider a family of maps from radially aligned curves over $S$, let $s$ be a geometric point of $S$. Denote by $\lambda$ the function on the vertices of the tropicalization $\plC_s$ of $C_s$ whose value on a vertex $v$ is the distance of $v$ from the circuit.  By assumption, the set of values $\lambda(v)$ is totally ordered.  Define the \textbf{contraction radius $\delta_s$} to be the smallest $\lambda(v)$, as $v$ ranges among the vertices of the dual graph of $C_s$, such that $f$ is non-constant on the corresponding component of $C_s$. In other words, $\delta_s$ measures the distance from the circuit to the closest non-contracted component.

Now suppose that $t \leadsto s$ is a geometric specialization.  Let $w$ be a component of $C_t$.  If $f$ is constant on all components $v$ of $C_s$ in the closure of $w$ then by the rigidity lemma \cite[Section~4, p.~43]{Mumford-AV}, $f$ is also constant on $w$.  Conversely, if $f$ is constant on $w$ then it is constant on all components of $C_s$ in the closure of $w$.  It follows that $\delta_t$ is the image of $\delta_s$ under the generization map $\overline M_{S,s} \rightarrow \overline M_{S,t}$.  Thus the collection of $\delta_s$ glues together into a section $\delta$ of $\overline M_S$ over $S$.

By Proposition~\ref{contraction}, the section $\delta$ induces a canonical logarithmic modification $\widetilde C \rightarrow C$ and contraction $\widetilde C \rightarrow \overline C$ over $S$, where $\overline C$ is a family of prestable curves in the sense of Smyth.  

We define $\VZ_{1,n}(Y,\beta)$ by imposing a closed condition on $\widetilde{\VZ}_{1,n}(Y,\beta)$:

\begin{definition}
Let $\VZ_{1,n}(Y,\beta)$ be the substack of $\widetilde{\VZ}_{1,n}(Y,\beta)$ parametrizing families of maps $C\to Y$, with notation as above, with the following \textbf{factorization property}: in the notation of the paragraph above, the composition $\widetilde C \rightarrow C \rightarrow Y$ factors through $\overline C$, in~\eqref{eqn:factorization}. 
\begin{equation} \label{eqn:factorization} \vcenter{\xymatrix{
\widetilde C \ar[r] \ar[d] & C \ar[d] \\
\overline C \ar[r] & Y
}} \end{equation}
\end{definition}

Note that the morphism $\overline{C}\to Y$ is by definition nonconstant on some branch of the component containing the genus~$1$ singularity.

Algebraicity is a consequence of general results applied to our framework.

\begin{lemma}\label{lem: algebraicty}
Suppose $Y$ is a quasiseparated algebraic space that is locally of finite presentation.  Then $\VZ_{1,n}(Y,\beta)$ is representable by algebraic spaces, locally of finite presentation, and quasiseparated over $\fM_{1,n}^{\mathrm{rad}}$.  If $Y$ is quasiprojective then $\VZ_{1,n}(Y,\beta)$ is locally quasiprojective over $\fM_{1,n}^{\mathrm{rad}}$.
\end{lemma}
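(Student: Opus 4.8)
The plan is to obtain $\VZ_{1,n}(Y,\beta)$ from the intermediate stack $\widetilde{\VZ}_{1,n}(Y,\beta)$, whose algebraicity is immediate, and then to check that the factorization property carves out a substack with the stated properties. For the first step, recall that $\widetilde{\VZ}_{1,n}(Y,\beta)$ is by definition the base change along $\fM_{1,n}^{\mathrm{rad}}\to\fM_{1,n}$ of the forgetful morphism $\Mbar_{1,n}(Y,\beta)\to\fM_{1,n}$. Over a family $C\to S$ of $n$-pointed prestable genus~$1$ curves the fibre of the latter is the open subfunctor of stable maps inside the relative Hom space $\underline{\Hom}_S(C,Y\times S)$; since $C\to S$ is proper, flat and of finite presentation and $Y$ is a quasiseparated algebraic space locally of finite presentation, this Hom space is representable by an algebraic space that is locally of finite presentation and quasiseparated over $S$ (representability of Hom-functors), and stability is an open condition. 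Hence $\Mbar_{1,n}(Y,\beta)\to\fM_{1,n}$ is representable by algebraic spaces, locally of finite presentation and quasiseparated, and these properties are stable under base change; since $\fM_{1,n}^{\mathrm{rad}}$ is algebraic by Corollary~\ref{cor:algebraic}, the stack $\widetilde{\VZ}_{1,n}(Y,\beta)$ is algebraic and inherits the three relative properties over $\fM_{1,n}^{\mathrm{rad}}$. When $Y$ is quasiprojective, using closed graphs one realizes $\underline{\Hom}_S(C,Y\times S)$ as a locally closed subfunctor of the relative Hilbert scheme of $C\times Y$ over $S$, hence locally quasiprojective over $S$, so $\widetilde{\VZ}_{1,n}(Y,\beta)$ is locally quasiprojective over $\fM_{1,n}^{\mathrm{rad}}$.

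Next I would analyse the factorization locus. Fix a scheme $T$ mapping to $\widetilde{\VZ}_{1,n}(Y,\beta)$, giving $g:\widetilde C_T\to C_T\xrightarrow{f}Y$ together with the contraction $\tau:\widetilde C_T\to\overline C_T$ of Proposition~\ref{contraction}; there $\tau$ is proper and surjective with $\tau_\star\mathscr O_{\widetilde C_T}=\mathscr O_{\overline C_T}$ and geometrically connected, reduced fibres, and by cohomology and base change these properties persist under all further base change, so $\tau$ is a universal effective epimorphism. Therefore $g$ factors through $\tau$ if and only if the two pullbacks of $g$ along the projections of $W:=\widetilde C_T\times_{\overline C_T}\widetilde C_T$ coincide, and any such factorization is unique. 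Consequently $\VZ_{1,n}(Y,\beta)\to\widetilde{\VZ}_{1,n}(Y,\beta)$ is a monomorphism, and over $T$ it is the equalizer of two morphisms $W\rightrightarrows Y$ from a scheme $W$ proper over $T$. Such an equalizer is the base change of the diagonal $Y\to Y\times Y$ along a map $W\to Y\times Y$; since $Y$ is a quasiseparated algebraic space its diagonal is representable by schemes, quasicompact, separated and of finite presentation, so the equalizer is representable, of finite presentation and quasiseparated over $W$, hence over $T$. Combining with the first paragraph, $\VZ_{1,n}(Y,\beta)$ is representable by algebraic spaces, locally of finite presentation and quasiseparated over $\fM_{1,n}^{\mathrm{rad}}$. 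If in addition $Y$ is quasiprojective, and hence separated, its diagonal is a closed immersion, so this equalizer is a closed subscheme of $W$; using the properness of $W$ over $T$ and the fact that the formation of $\tau$ and $\overline C_T$ commutes with base change, a specialization argument (equivalently the valuative criterion, extending a factorization over a discrete valuation ring by flatness of $\overline C_T$ and $\tau_\star\mathscr O=\mathscr O$) upgrades this to the statement that $\VZ_{1,n}(Y,\beta)\hookrightarrow\widetilde{\VZ}_{1,n}(Y,\beta)$ is a closed immersion, so $\VZ_{1,n}(Y,\beta)$ is locally quasiprojective over $\fM_{1,n}^{\mathrm{rad}}$.

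The routine inputs here are the representability of Hom- and Hilbert-functors and the stability of the various properties under base change; the one genuinely delicate point is the second paragraph. The obstacle is that $\tau$ is not flat, so $W=\widetilde C_T\times_{\overline C_T}\widetilde C_T$ need not be flat over $T$, and that a merely quasiseparated $Y$ has a diagonal that need not be an immersion, so one cannot naively argue that "factors through $\tau$" is a closed or even locally closed condition. Both difficulties are circumvented by phrasing the condition as an equalizer, which uses only the identity $\tau_\star\mathscr O_{\widetilde C_T}=\mathscr O_{\overline C_T}$ and its persistence under base change, both supplied by Proposition~\ref{contraction}; the refinement to a closed immersion, and thence to local quasiprojectivity, is then exactly what the separatedness of a quasiprojective $Y$ provides.
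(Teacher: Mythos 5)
There is a genuine gap in the second paragraph, precisely at the point you flag as the delicate one. You claim that $\tau_\star\mathscr O_{\widetilde C_T}=\mathscr O_{\overline C_T}$, that this ``persists under all further base change,'' and hence that $\tau$ is a universal effective epimorphism, so that factoring through $\tau$ is an equalizer condition on $W=\widetilde C_T\times_{\overline C_T}\widetilde C_T$. This persistence fails. Proposition~\ref{contraction} proves $\tau_\star\mathscr O_{\widetilde C}=\mathscr O_{\overline C}$ over a discrete valuation ring (essentially because there the total space of $\widetilde C$ is normal and $\overline C$ satisfies Serre's criterion), but after base change to a geometric point $s$ in the boundary, the map $\tau_s:\widetilde C_s\to\overline C_s$ contracts the genus-$1$ subcurve $E_s$ onto an elliptic $m$-fold point, and $(\tau_s)_\star\mathscr O_{\widetilde C_s}$ is the structure sheaf $\mathscr A$ of the \emph{seminormalization} of $\overline C_s$ at that point, not $\mathscr O_{\overline C_s}$; the quotient $\mathscr A/\mathscr O_{\overline C_s}$ is one-dimensional by the definition of the genus of the singularity (Section~\ref{sec: genus-1-singularities}). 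Consequently $\tau$ is not a universal effective epimorphism, and the coequalizer of $W\rightrightarrows\widetilde C_T$ is governed by $\mathscr A$, not by $\mathscr O_{\overline C}$. A map $\widetilde C_T\to Y$ whose two pullbacks to $W$ coincide factors, at worst, through the seminormalization of $\overline C_T$; imposing that it factor through $\overline C_T$ itself is a strictly stronger, genuinely scheme-theoretic condition. This is exactly the nontrivial content of the factorization property, as the proof of Theorem~\ref{vz-properness} makes explicit by isolating the obstruction map $g^{-1}\mathscr O_Y\to\tau_\star(\mathscr O_{\widetilde C})/\mathscr O_{\overline C}$. Your equalizer formulation would accept maps with no such factorization, so it does not cut out $\VZ_{1,n}(Y,\beta)$.

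The paper sidesteps this entirely by not attempting to reconstruct the factored map from $f$: it instead \emph{includes} the morphism $g:\overline C\to Y$ as part of the data and identifies $\VZ_{1,n}(Y,\beta)$ with the stable locus of the fiber product of $\Hom$-spaces $\Hom_S(C,Y)\times_{\Hom_S(\widetilde C,Y)}\Hom_S(\overline C,Y)$, each of which is algebraic by \cite[Theorem~1.2]{Hall-Rydh}. All that is then needed from $\tau$ is that the induced map $\Hom_S(\overline C,Y)\to\Hom_S(\widetilde C,Y)$ be a monomorphism, which follows from the \emph{injectivity} of $\mathscr O_{\overline C}\to\tau_\star\mathscr O_{\widetilde C}$ fiberwise (reducedness and surjectivity), a condition that \emph{does} persist under base change even though the isomorphism does not. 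If you want to repair your argument while keeping its shape, you should replace the equalizer on $W$ by this fiber product, or equivalently add a choice of $g$ to the moduli data and prove only uniqueness, for which injectivity rather than bijectivity of $\tau^\flat$ suffices. Your first paragraph, treating $\widetilde{\VZ}_{1,n}(Y,\beta)$, is correct.
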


\begin{proof}
For any $S$-point of $\fM_{1,n}^{\mathrm{rad}}$, we show that the fiber product $S \mathop\times_{\fM_{1,n}^{\mathrm{rad}}} \VZ_{1,n}(Y,\beta)$ has the requisite properties over $S$.  Over $S$, we have a diagram of curves
\begin{equation*} \xymatrix{
\widetilde C \ar[r] \ar[d] & C \\
\overline C
} \end{equation*}
that is constructed as was indicated above.  We can identify $S \mathop\times_{\fM_{1,n}^{\mathrm{rad}}} \VZ_{1,n}(Y,\beta)$ as the stable locus of a fiber product of $\Hom$-spaces over $S$,
\begin{equation*} \xymatrix{
\Hom_S(C, Y) \mathop\times_{\Hom_S(\overline C, Y)} \Hom_S(\widetilde C, Y) 
} \end{equation*}
As $C$, $\overline C$, and $\widetilde C$ are all flat, proper, and of finite presentation over $S$, we may apply \cite[Theorem~1.2]{Hall-Rydh} to obtain the algebraicity, finite presentation, and quasiseparatedness of the fiber product.  The stability condition cutting out $\VZ_{1,n}(Y,\beta)$ is open.  If $Y$ is quasiprojective then the $\Hom$-schemes are all quasiprojective \cite[Section 4.c]{FGA-IV}, so $\VZ_{1,n}(Y,\beta)$ is as well.
\end{proof}

The factorization property is satisfied by all limits of maps from smooth curves.

\begin{theorem}\label{vz-properness}
Assume that $Y$ is proper.  Then $\VZ_{1,n}(Y,\beta)$ is proper.
\end{theorem}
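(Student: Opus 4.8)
The plan is to deduce properness from a valuative criterion, after a reduction that isolates the essential point. By Lemma~\ref{lem: algebraicty} the stack $\VZ_{1,n}(Y,\beta)$ is algebraic and quasiseparated over $\fM_{1,n}^{\mathrm{rad}}$, so it remains to see that it is of finite type, separated, and universally closed. The first step is to observe that the ``thickened'' space $\widetilde{\VZ}_{1,n}(Y,\beta)$ is already proper: by Proposition~\ref{prop:modification} the morphism $\fM_{1,n}^{\mathrm{rad}} \to \fM_{1,n}^{\log}$ is a logarithmic modification, so the underlying morphism of algebraic stacks $\fM_{1,n}^{\mathrm{rad}} \to \fM_{1,n}$ is representable and proper --- locally on the target it is the base change of a toric modification of toric varieties --- and base changing along $\Mbar_{1,n}(Y,\beta) \to \fM_{1,n}$ shows that $\widetilde{\VZ}_{1,n}(Y,\beta) \to \Mbar_{1,n}(Y,\beta)$ is proper. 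Since $Y$ is proper the Kontsevich space $\Mbar_{1,n}(Y,\beta)$ is proper, hence so is $\widetilde{\VZ}_{1,n}(Y,\beta)$. Now $\VZ_{1,n}(Y,\beta)$ is a locally closed substack of $\widetilde{\VZ}_{1,n}(Y,\beta)$ --- it is the locus where the composite $\widetilde C \to C \to Y$ factors through the contraction $\overline C$, a closed condition once the contraction radius is fixed --- so it is automatically of finite type and separated, and it is proper exactly when it is \emph{closed} in $\widetilde{\VZ}_{1,n}(Y,\beta)$. The theorem thus reduces to the claim that the factorization property is stable under specialization.

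To verify this, take a discrete valuation ring $R$ with fraction field $K$ and (after a finite base change) algebraically closed residue field, and a morphism $\Spec R \to \widetilde{\VZ}_{1,n}(Y,\beta)$ whose restriction to $\Spec K$ lies in $\VZ_{1,n}(Y,\beta)$. This produces over $R$ a radially aligned logarithmic curve $C$, a stable map $f : C \to Y$, the contraction radius $\delta \in \overline M_R$, the logarithmic modification $\nu : \widetilde C \to C$, and --- applying Proposition~\ref{contraction}, for which $\mathscr O_R \to \mathscr O_R(\delta)$ is injective because $R$ is a domain --- the contraction $\tau : \widetilde C \to \overline C$ onto a family of Smyth prestable curves, with $\tau_\star \mathscr O_{\widetilde C} = \mathscr O_{\overline C}$ and with exceptional locus the circuit $E = E_\delta$ of Definition~\ref{def:E}. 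Writing $g = f \circ \nu$, we are given that $g_K$ factors through $\tau_K$ and must show that $g$ factors through $\tau$.

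The plan for this is to pass to graphs. Let $\Gamma \subseteq \overline C \times_R Y$ be the scheme-theoretic closure of the graph of the factoring map $\overline C_K \to Y$; then $\Gamma$ is the scheme-theoretic image of $(\tau, g) : \widetilde C \to \overline C \times_R Y$, so $\tau$ factors as $\widetilde C \twoheadrightarrow \Gamma \xrightarrow{p} \overline C$ with $p$ proper, an isomorphism over $K$, and $p_\star \mathscr O_\Gamma = \mathscr O_{\overline C}$, the last because $p_\star \mathscr O_\Gamma$ is squeezed between $\mathscr O_{\overline C}$ and $\tau_\star \mathscr O_{\widetilde C} = \mathscr O_{\overline C}$. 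Hence $g$ factors through $\tau$ if and only if $p$ is an isomorphism, and since $p$ is proper with $p_\star\mathscr O_\Gamma = \mathscr O_{\overline C}$, this holds as soon as $p$ is quasi-finite, i.e. as soon as $g$ contracts each fibre of $\tau$ to a point. Away from the images of the elliptic singularities $\tau$ is itself quasi-finite, so the only case to treat is the fibre of $\tau$ over an elliptic $\ell$-fold point, which is the connected genus-$1$ curve $E$; and $g$ collapses $E$ because, by the very definition of the contraction radius, every component of $E$ lies strictly inside the circle of radius $\delta$ and is therefore contracted by $f$. This gives quasi-finiteness of $p$, hence $p$ is finite, hence an isomorphism, and so $g$ factors through $\tau$. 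Separatedness of $\VZ_{1,n}(Y,\beta)$ follows in the same spirit from separatedness of $\Mbar_{1,n}(Y,\beta)$ and of the logarithmic modification $\fM_{1,n}^{\mathrm{rad}} \to \fM_{1,n}$, since the contraction $\tau$ and the factoring map $\overline C \to Y$ are canonically determined by the data.

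The main obstacle is the scheme-theoretic version of the last paragraph's assertion: the circuit $E$ need only be reduced along each component, and the fibres of $\overline C$ are not normal at the elliptic singularity, so ``$g$ collapses $E$'' is really a statement about the order to which $f$ is constant on the contracted genus-$1$ subcurve, which is precisely the factorization property one wants to propagate. To make it rigorous one compares with the situation over $K$, using that the formation of the section ring $\bigoplus_{k\geq 0} \pi_\star \mathscr O_{\widetilde C}(k\mu)$ defining $\overline C$ commutes with base change (Lemma~\ref{boundary}, Proposition~\ref{locallyfree}) and that the factorization property holds for limits of maps from smooth curves, thereby reducing the general specialization statement to that already-established case. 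Granting this, properness of $\VZ_{1,n}(Y,\beta)$ follows by combining the properness of the Kontsevich space and of logarithmic modifications with the contraction machinery of Section~\ref{sec: target-point}.
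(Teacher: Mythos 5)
Your overall skeleton matches the paper's: show $\widetilde{\VZ}_{1,n}(Y,\beta)$ is proper (pullback of the modification $\fM_{1,n}^{\mathrm{rad}}\to\fM_{1,n}$ over the proper Kontsevich space), then show the monomorphism $\VZ_{1,n}(Y,\beta)\hookrightarrow\widetilde{\VZ}_{1,n}(Y,\beta)$ is closed. For the valuative criterion your argument is genuinely different from the paper's and, as far as it goes, correct: the paper factors $g^{-1}\mathscr O_Y$ through the fiber-product sheaf $\mathscr A = j_\star\mathscr O_{\overline C_\eta}\times_{j_\star\tau_\star\mathscr O_{\widetilde C_\eta}}\tau_\star\mathscr O_{\widetilde C}$ and proves $\mathscr O_{\overline C}\to\mathscr A$ is an isomorphism by torsion-freeness (hence flatness over the valuation ring) plus constancy of Euler characteristic, whereas you take the closure $\Gamma$ of the generic-fiber graph, get $p_\star\mathscr O_\Gamma=\mathscr O_{\overline C}$ by squeezing between $\mathscr O_{\overline C}$ and $\tau_\star\mathscr O_{\widetilde C}=\mathscr O_{\overline C}$, and conclude via proper $+$ quasi-finite $=$ finite $+$ $\mathscr O$-connected $=$ isomorphism. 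That route works, and note that it needs only the \emph{set-theoretic} statement that $g$ collapses each fiber of $\tau$ (for quasi-finiteness of $p$); the scheme-theoretic content is already carried by $p_\star\mathscr O_\Gamma=\mathscr O_{\overline C}$, which rests on $\tau_\star\mathscr O_{\widetilde C}=\mathscr O_{\overline C}$ from Proposition~\ref{contraction}. So the ``main obstacle'' you raise in the last paragraph is not actually an obstacle to your own argument, and the fix you propose there is a genuine gap: ``the factorization property holds for limits of maps from smooth curves'' is not an available input --- in the paper it is a slogan for the theorem being proved --- so invoking it (together with an unexecuted appeal to Lemma~\ref{boundary} and Proposition~\ref{locallyfree}) is circular. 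Delete that paragraph and let the graph argument stand.

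The second gap is the assertion that $\VZ_{1,n}(Y,\beta)$ is a \emph{locally closed} substack of $\widetilde{\VZ}_{1,n}(Y,\beta)$, ``a closed condition once the contraction radius is fixed.'' This is neither justified nor what the paper's analysis gives: on a stratum where the combinatorics of $\tau$ are constant, the factorization locus is the union of the open set where $\tau_\star\mathscr O_{\widetilde C_s}/\mathscr O_{\overline C_s}=0$ and the closed set where the cokernel of $\gamma_s:g^{-1}\mathscr O_Y\to\tau_\star\mathscr O_{\widetilde C_s}/\mathscr O_{\overline C_s}$ is nonzero, so what one obtains globally is constructibility, i.e.\ quasicompactness of the inclusion --- not local closedness. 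You need this quasicompactness both to legitimately use the discrete-valuation-ring form of the valuative criterion (and your finiteness/separatedness claims) and to upgrade ``universally closed monomorphism'' to ``closed immersion''; the paper devotes a stratification argument to exactly this point, and your write-up should supply one rather than assert local closedness.
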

\begin{proof}
As it is pulled back from the modification $\fM_{1,n}^{\mathrm{rad}} \to \mathfrak M_{1,n}$, the moduli space $\widetilde{\VZ}_{1,n}(Y,\beta)$ is certainly proper over $\Mbar_{1,n}(Y,\beta)$.  We argue that the map $i : \VZ_{1,n}(Y,\beta) \to \widetilde{\VZ}_{1,n}(Y,\beta)$, which is a monomorphism by definition, is a closed embedding.  We will do this by showing $i$ is quasicompact and satisfies the valuative criterion for properness.  It is not necessary to check that $i$ is locally of finite type, as quasicompactness and the valuative criterion imply $i$ is universally closed \cite[Tag 01KF]{stacks-project}, and it is not difficult to deduce from this that $i$ is a closed embedding.

We begin with quasicompactness.  This is a local condition in the constructible topology on $\widetilde{\VZ}_{1,n}(Y,\beta)$ \cite[Proposition~(IV.1.9.15)]{EGA}, so we may replace $\widetilde{\VZ}_{1,n}(Y,\beta)$ with the components of any stratification into locally closed subsets $S$. 

An $S$-point of $\widetilde{\VZ}_{1,n}(Y,\beta)$ gives a morphism $f : \widetilde C \rightarrow Y$ and it lies in $\VZ_{1,n}(Y,\beta)$ if and only if $f$ factors through the contraction $\tau : \widetilde C \rightarrow \overline C$ by a morphism $g : \overline C \rightarrow Y$.  By the construction of $\tau$, we know that $f$ factors \emph{topologically} through $\tau$, so we obtain a homomorphism
\begin{equation*}
g^{-1} \mathscr O_Y \rightarrow \tau_\star \mathscr O_{\widetilde C} .
\end{equation*}
For $f$ to lie in $\VZ_{1,n}(Y,\beta)$ means precisely that the image of this homomrphism is contained in the subring $\mathscr O_{\overline C} \subset \tau_\star \mathscr O_{\widetilde C}$.  Now, the obstruction to factorization through $\mathscr O_{\overline C}$ is the composition
\begin{equation*}
\gamma : g^{-1} \mathscr O_Y \rightarrow \tau_\star \left(\mathscr O_{\widetilde C}\right) / \mathscr O_{\overline C} .
\end{equation*}
Replacing $S$ with a stratification, we can assume that the combinatorial types of $\widetilde C$ and $\overline C$ and the contraction $\tau$ are constant.  Under this assumption, the formation of $\tau_\star (\mathscr O_{\widetilde C}) / \mathscr O_{\overline C}$ commutes with base change in $S$.  Note that, because $\tau_\star \mathscr O_{\widetilde C_s}$ is the structure sheaf of the seminormalization of $\widetilde C_s$ when $s$ is a geometric point, the quotient $\tau_\star (\mathscr O_{\widetilde C_s}) / \mathscr O_{\overline C}$ has dimension either $0$ or $1$.
We can therefore identify the points $s$ of $S \mathop\times_{\widetilde{\VZ}_{1,n}(Y,\beta)} \VZ_{1,n}(Y,\beta)$ as those where $\tau_\star (\mathscr O_{\widetilde C_s}) / \mathscr O_{\overline C_s} = 0$ (which is an open subset) or where the cokernel of $\gamma_s$ is nonzero (which is closed).  
In any case, it is constructible.

Now we address the valuative criterion for properness.  Let $S$ be the spectrum of a valuation ring with generic point $\eta$.  Assume that $\eta$ has a logarithmic structure $M_\eta$.  We give $S$ the \emph{maximal} logarithmic structure extending $M_\eta$; that is, we set $M_S = \mathscr O_S \mathop\times_{\mathscr O_\eta} M_\eta$.  We assume that we already have a commutative diagram of solid lines
\begin{equation*} \xymatrix{
\widetilde C_\eta \ar[r] \ar[d]_\tau & \widetilde C \ar[d]_<>(0.3)\tau \ar[r]^-f & Y \\
\overline C_\eta \ar[urr] \ar[r]_j & \overline C  \ar@{-->}[ur]_g
} \end{equation*}
that we wish to extend by a dashed arrow.  By definition, $f$ factors topologically through $\overline C$, and does so uniquely, so we certainly have the horizontal arrow of the diagram below:
\begin{equation*} \xymatrix{
& \mathscr O_{\overline C} \ar[d]^{\varphi} \\
g^{-1} \mathscr O_Y \ar@{-->}[ur] \ar[r] & \displaystyle j_\star \mathscr O_{\overline C_\eta} \mathop\times_{j_\star \tau_\star \mathscr O_{\widetilde C_\eta}} \tau_\star \mathscr O_{\widetilde C}
} \end{equation*}
In order to promote $g$ to morphism of schemes, we must find a dashed arrow completing the diagram above.  We will do so by showing that $\varphi$ is an isomorphism.  We introduce the notation $\mathscr A = j_\star \mathscr O_{\overline C_\eta} \mathop\times_{j_\star \tau_\star \mathscr O_{\widetilde C_\eta}} \tau_\star \mathscr O_{\widetilde C}$.

Since $\widetilde C$ is flat over $S$, the sheaf $\mathscr O_{\widetilde C}$ is torsion free, and therefore $\tau_\star \mathscr O_{\widetilde C}$ is torsion free as well.  Thus, the subring $\mathscr A \subset \tau_\star \mathscr O_{\widetilde C}$ is also torsion free, and therefore flat over $S$ by \cite[Tag 0539]{stacks-project}.  

Observe now that the quotient $\mathscr A / \mathscr O_{\overline C}$ is finite over $S$, concentrated at the genus~$1$ singularity in the special fiber over $S$.  Therefore the exact sequence
\begin{equation*} 
0 \to \mathscr O_{\widetilde C} \xrightarrow{\varphi} \mathscr A \to \mathscr A / \mathscr O_{\widetilde C} \to 0
\end{equation*}
gives
\def\length{\operatorname{length}}%
\begin{equation*}
\chi(\mathscr A) = \chi(\mathscr O_{\overline C}) + \length(\mathscr A / \mathscr O_{\overline C}) .
\end{equation*}
But $\mathscr A$ and $\mathscr O_{\overline C}$ agree generically, and Euler characteristic is constant in flat families, so $\length(\mathscr A / \mathscr O_{\overline C})$ is $0$ and $\varphi : \mathscr O_{\overline C} \rightarrow \mathscr A$ is an isomorphism.  This proves the valuative criterion. Thus $\VZ_{1,n}(Y,\beta)$ is closed in $\widetilde{\VZ}_{1,n}(Y,\beta)$, and thus, proper.
\end{proof} 

\setcounter{subsection}{\value{theorem}}
\numberwithin{theorem}{subsection}
\subsection{Obstruction theory {\it \&} the virtual class} The standard construction for the virtual class of the Kontsevich space relative to the moduli space of curves applies to the moduli space $\VZ_{1,n}(Y,\beta)$. Let $\mathsf{vdim}$ denote the expected dimension of the moduli space of stable maps of genus $1$ to $Y$, i.e. 
%
\[
\mathsf{vdim} = -K_Y\cdot \beta+n,
\]
where $K_Y$ is the canonical class of $Y$.

\begin{theorem}\label{thm: vfc}
The moduli space $\VZ_{1,n}(Y,\beta)$ possesses a virtual fundamental class
\[
[\VZ_{1,n}(Y,\beta)]^{\mathrm{vir}}\in A_\mathsf{vdim}(\VZ_{1,n}(Y,\beta)).
\]
\end{theorem}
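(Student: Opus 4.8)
The plan is to put a perfect obstruction theory on $\VZ_{1,n}(Y,\beta)$ relative to the smooth Artin stack $\fM_{1,n}^{\mathrm{rad}}$ and then invoke the Behrend--Fantechi construction. Recall that $\fM_{1,n}^{\mathrm{rad}}$ is smooth by Corollary~\ref{cor:smooth}, and, being a logarithmic modification of $\mathfrak M_{1,n}^{\log}$ (Proposition~\ref{prop:modification}), is birational to $\mathfrak M_{1,n}$ and hence of dimension $n$. Forgetting the map gives $\VZ_{1,n}(Y,\beta)\to\fM_{1,n}^{\mathrm{rad}}$, and over the source we have the universal radially aligned curve $\mathcal C$, the contraction radius $\delta$ manufactured from the universal stable map $f\colon\mathcal C\to Y$, the semistable model $\widetilde{\mathcal C}\to\mathcal C$, and --- this is where the factorization property is used --- the contraction $\sigma\colon\widetilde{\mathcal C}\to\overline{\mathcal C}$ together with $g\colon\overline{\mathcal C}\to Y$. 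By Proposition~\ref{contraction}, $\overline\pi\colon\overline{\mathcal C}\to\VZ_{1,n}(Y,\beta)$ is flat and proper with Gorenstein fibres of arithmetic genus~$1$.

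I would take as relative obstruction theory the complex $\mathbb E^\bullet=\bigl(\mathrm R\overline\pi_\star g^\star T_Y\bigr)^\vee$. Since $\overline\pi$ is flat and proper of relative dimension~$1$ and $Y$ is smooth, $\mathrm R\overline\pi_\star g^\star T_Y$ is perfect of amplitude $[0,1]$ and its formation commutes with base change, so $\mathbb E^\bullet$ is perfect of amplitude $[-1,0]$. The structural map $\mathbb E^\bullet\to\mathbf L_{\VZ_{1,n}(Y,\beta)/\fM_{1,n}^{\mathrm{rad}}}$ is the one attached to the universal family, and to see it is an obstruction theory one argues as follows: over a fixed point of $\fM_{1,n}^{\mathrm{rad}}$ the curve $\mathcal C$, and with it the combinatorial types and families of $\widetilde{\mathcal C}$ and $\overline{\mathcal C}$, are rigid, so deforming a point of $\VZ_{1,n}(Y,\beta)$ over $\fM_{1,n}^{\mathrm{rad}}$ is the same as deforming $g\colon\overline{\mathcal C}\to Y$ --- one recovers $f$ and the factorization from $g$, the descent of $g\circ\sigma$ along $\widetilde{\mathcal C}\to\mathcal C$ being automatic because the rational bridges contracted there carry degree~$0$, a condition preserved within a fixed combinatorial type. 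Deformations and obstructions of such $g$ are $\mathrm H^0$ and $\mathrm H^1$ of $g^\star T_Y$ on the fibres, that is, are controlled by $\mathrm R\overline\pi_\star g^\star T_Y$; this identifies $\mathbb E^\bullet$ as a perfect obstruction theory for $\VZ_{1,n}(Y,\beta)$ over $\fM_{1,n}^{\mathrm{rad}}$.

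Granting this, the virtual class is formal: $\fM_{1,n}^{\mathrm{rad}}$ being smooth, $\mathbb E^\bullet$ upgrades to an absolute perfect obstruction theory, and Behrend--Fantechi yield $[\VZ_{1,n}(Y,\beta)]^{\mathrm{vir}}$. Its dimension is $\rank\mathbb E^\bullet+\dim\fM_{1,n}^{\mathrm{rad}}$, and on a geometric fibre $\overline{\mathcal C}_s$ Riemann--Roch gives
\[
\rank\mathbb E^\bullet=\chi\bigl(\overline{\mathcal C}_s,g^\star T_Y\bigr)=\deg\bigl(g^\star T_Y\big|_{\overline{\mathcal C}_s}\bigr)+(\dim Y)\,\chi(\mathscr O_{\overline{\mathcal C}_s}).
\]
Here $\deg(g^\star T_Y\big|_{\overline{\mathcal C}_s})=c_1(T_Y)\cdot g_\star[\overline{\mathcal C}_s]=c_1(T_Y)\cdot\beta=-K_Y\cdot\beta$, using that $g_\star[\overline{\mathcal C}_s]=\beta$ because $\overline{\mathcal C}_s$ is birational to $\mathcal C_s$ through $\widetilde{\mathcal C}_s$, while $\chi(\mathscr O_{\overline{\mathcal C}_s})=1-p_a(\overline{\mathcal C}_s)=0$ since $\overline{\mathcal C}_s$ is connected, reduced, of arithmetic genus~$1$. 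Thus the virtual dimension equals $-K_Y\cdot\beta+n=\mathsf{vdim}$, as claimed.

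The one part requiring genuine care is the middle paragraph: the objects $\delta,\widetilde{\mathcal C},\overline{\mathcal C},g$ are not pulled back from $\fM_{1,n}^{\mathrm{rad}}$ but are produced from the map, so one must verify that over an arbitrary base $S$ the contraction $\overline{\mathcal C}_S\to S$ remains flat with base-change-compatible direct images --- Proposition~\ref{contraction} and Lemma~\ref{boundary} do this where $\mathscr O_S\to\mathscr O_S(\delta)$ is injective, and one reduces to that situation as in those proofs --- and that the factorization, which is a closed condition inside $\widetilde{\VZ}_{1,n}(Y,\beta)$ as in the proof of Theorem~\ref{vz-properness}, is exactly what makes $g$ available as a universal object so that $\mathbb E^\bullet$ is defined on $\VZ_{1,n}(Y,\beta)$. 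Everything after that is the standard Behrend--Fantechi formalism, exactly as for the Kontsevich space relative to the moduli of curves.
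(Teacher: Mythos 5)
Your construction proves the statement, but it is not the paper's argument, and the difference is substantive. The paper's proof never uses $\overline{\mathcal C}$ or $g$ at all: it takes the relative obstruction theory $E^\bullet = \bigl(\mathrm R\pi_\star f^\star T_Y\bigr)^\vee$ built from the universal stable map $f\colon \mathcal C \to Y$ on the \emph{uncontracted} universal curve, which \emph{is} pulled back from $\fM_{1,n}^{\mathrm{rad}}$, so the morphism $E^\bullet \to \mathbf L_{\VZ_{1,n}(Y,\beta)/\fM_{1,n}^{\mathrm{rad}}}$ is the completely standard one for maps from prestable curves to a smooth target; it then applies Manolache's virtual pullback along $\pi$ to the fundamental class of the smooth stack $\fM_{1,n}^{\mathrm{rad}}$. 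That route is shorter and keeps the obstruction theory compatible with the usual Gromov--Witten obstruction theory of $\Mbar_{1,n}(Y,\beta)$. Your complex $\bigl(\mathrm R\overline\pi_\star g^\star T_Y\bigr)^\vee$ is genuinely different (the two disagree exactly on the locus where the circuit is contracted), and while it has the virtue of being visibly unobstructed for $Y=\PP^r$ by Lemma~\ref{lem:h1'} --- which is precisely the deformation theory the paper deploys later in Theorem~\ref{thm: vz-smoothness} --- it carries burdens the paper avoids: $\overline{\mathcal C}$ is not pulled back from $\fM_{1,n}^{\mathrm{rad}}$, since $\delta$ depends on the map, so the arrow to $\mathbf L_{\VZ_{1,n}(Y,\beta)/\fM_{1,n}^{\mathrm{rad}}}$ is not ``the one attached to the universal family'' by the standard recipe (one must interpose, say, a stack of pairs consisting of a radially aligned curve and a comparable radius $\delta$, or work relative to the Picard stack of $\overline{\mathcal C}$), and you need flatness and base change for $\overline{\mathcal C}$ over all of $\VZ_{1,n}(Y,\beta)$, including bases mapping into the locus where $\mathscr O_S \to \mathscr O_S(\delta)$ fails to be injective, which Proposition~\ref{contraction} and Lemma~\ref{boundary} do not literally cover.

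One of your justifications also needs repair. The claim that deformations of $g$ automatically descend to deformations of $f$ ``because the rational bridges carry degree $0$, a condition preserved within a fixed combinatorial type'' is not an argument over non-reduced bases: degree is an invariant of geometric fibres and does not by itself force a map from $R \times S'$, with $S'$ an infinitesimal extension, to be constant along $R$. The correct reason is rigidity: $g$ is constant on each bridge $\overline R \simeq \PP^1$ through the elliptic point, so $g^\star T_Y\big|_{\overline R}$ is trivial and has only constant sections, whence every infinitesimal deformation of $g$ remains constant along $\overline R$ and $g'\circ\sigma$ descends along $\widetilde{\mathcal C}' \to \mathcal C'$ (compare the rigidity-lemma argument the paper uses to glue the contraction radius). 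With that repaired, your identification of the relative deformation and obstruction spaces with $H^0$ and $H^1$ of $g^\star T_Y$ on $\overline{\mathcal C}_s$ is correct, the Riemann--Roch count does give $\mathsf{vdim}$, and Behrend--Fantechi produces a class of the right dimension; but be aware that, since your obstruction theory differs from the paper's, the class you obtain is not a priori equal to the paper's $[\VZ_{1,n}(Y,\beta)]^{\mathrm{vir}}$, even though both satisfy the literal statement of the theorem.
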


\begin{proof}
Consider the forgetful morphism
\[
\pi: \VZ_{1,n}(Y,\beta)\to \mathscr \fM_{1,n}^{\mathrm{rad}}.
\]
By well-known deformation theory for morphisms from curves to smooth targets, there exists a relative perfect obstruction theory 
\[
E^\bullet \to \mathbf{L}^\bullet_{\VZ_{1,n}(Y,\beta)/\fM_{1,n}^{\mathrm{rad}}}
\]
with $E^\bullet = R \pi_\star (f^\star T_Y)^\vee$.  The complex $E^\bullet$ determines a vector bundle stack $\mathbf E$ over the moduli space $\VZ_{1,n}(Y,\beta)$ the map $\pi$ has Deligne--Mumford type, in the sense of~\cite[Section 2]{Mano12}. Applying Manolache's virtual pullback $\pi_{\mathbf E}^{!}$ to the fundamental class of  $\fM_{1,n}^{\mathrm{rad}}$, we obtain a virtual fundamental class in expected dimension.
\end{proof}

\subsection{Maps to projective space} \label{sec:target-proj}
The main result of this section is the smoothness of the space of maps to $\PP^r$. 

\begin{theorem}\label{thm: vz-smoothness}
The moduli space $\VZ_{1,n}(\PP^r,d)$ is smooth of dimension 
\[
\dim \ \VZ_{1,n}(\PP^r,d) = (r+1)d+n,
\]
and its virtual fundamental class is equal to the usual fundamental class.
\end{theorem}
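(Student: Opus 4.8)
We will deduce the theorem by showing that the obstruction sheaf of a perfect obstruction theory for $\VZ_{1,n}(\PP^r,d)$ relative to $\fM_{1,n}^{\mathrm{rad}}$ vanishes. By Corollary~\ref{cor:smooth} (and Proposition~\ref{prop:modification}, which presents $\fM_{1,n}^{\mathrm{rad}}$ as a logarithmic modification of $\mathfrak M_{1,n}^{\log}$, hence of the same dimension as $\mathfrak M_{1,n}$), the stack $\fM_{1,n}^{\mathrm{rad}}$ is smooth of dimension $n$; by Theorem~\ref{vz-properness} the space $\VZ_{1,n}(\PP^r,d)$ is proper, and by Theorem~\ref{thm: vfc} it carries a relative perfect obstruction theory over $\fM_{1,n}^{\mathrm{rad}}$. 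Granting that the associated obstruction sheaf vanishes, the morphism $\VZ_{1,n}(\PP^r,d)\to\fM_{1,n}^{\mathrm{rad}}$ is smooth, so $\VZ_{1,n}(\PP^r,d)$ is smooth; its relative dimension is the rank of the relative tangent bundle, which Riemann--Roch identifies with $(r+1)d$, giving $\dim\VZ_{1,n}(\PP^r,d)=(r+1)d+n$; and the vanishing of the obstruction forces the virtual class to coincide with the fundamental class. (Irreducibility, as recorded in Theorem~\ref{thm: ordinary}, then follows from the density of the open locus of maps from smooth genus~$1$ curves, which is itself irreducible.) So the whole theorem reduces to the vanishing of the relative obstruction sheaf.

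The decisive point is that this obstruction sheaf is \emph{not} $R^1\pi_\star f^\star T_{\PP^r}$ computed on the prestable source $C$. That sheaf is nonzero precisely when $C$ acquires a reducible genus~$1$ core on which $f$ is contracted --- on such a core $f^\star T_{\PP^r}$ is trivial, so it contributes $H^1(\mathscr O)^{\oplus r}\neq 0$ --- and its nonvanishing is exactly what makes the ordinary space $\Mbar_{1,n}(\PP^r,d)$ singular. Over $\fM_{1,n}^{\mathrm{rad}}$, however, the contraction radius and the contraction $\tau:\widetilde C\to\overline C$ of Proposition~\ref{contraction} are canonically determined, the map factors as $f=g\circ\tau$ with $g:\overline C\to\PP^r$, and --- this is the role of the factorization property --- first--order deformations of the map relative to $\fM_{1,n}^{\mathrm{rad}}$ are the same as first--order deformations of $g$ out of $\overline C$, the latter being pulled back from a flat family over $\fM_{1,n}^{\mathrm{rad}}$. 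Hence the relative obstruction space at a geometric point $s$ is $H^1(\overline C_s,g_s^\star T_{\PP^r})$, and everything comes down to proving
\begin{equation*}
H^1\bigl(\overline C_s,\, g_s^\star T_{\PP^r}\bigr)=0.
\end{equation*}

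To prove this, pull back the Euler sequence $0\to\mathscr O_{\PP^r}\to\mathscr O_{\PP^r}(1)^{\oplus(r+1)}\to T_{\PP^r}\to 0$ along $g_s$; since $\overline C_s$ is a curve, $H^2$ vanishes, so $H^1(\overline C_s,g_s^\star T_{\PP^r})$ is a quotient of $H^1(\overline C_s,L)^{\oplus(r+1)}$ with $L=g_s^\star\mathscr O_{\PP^r}(1)$, and it suffices to show $H^1(\overline C_s,L)=0$. Write $\overline C_s=\overline C_0\cup\bigcup_j T_j$, where $\overline C_0$ is the Gorenstein genus~$1$ core --- smooth elliptic, a ring of rational curves, or an elliptic $m$-fold point, in each case without genus~$0$ tails --- and the $T_j$ are the rational trees attached to $\overline C_0$, each along one point $q_j$. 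Stability of the map forces $L$ to have non-negative degree on every component of every $T_j$ and on every component of $\overline C_0$, while the factorization clause ``nonconstant on a branch of the central genus~$1$ component'' forces $L$ to have strictly positive degree on at least one branch of $\overline C_0$. On each rational tree a routine induction on the number of components shows $H^1(T_j,L|_{T_j})=0$ and that $L|_{T_j}$ is globally generated. On the core, Serre duality applies because $\overline C_0$ is Gorenstein, and its dualizing sheaf is trivial by Proposition~\ref{prop:dualizing-trivial}, so $H^1(\overline C_0,L|_{\overline C_0})^\vee\cong H^0(\overline C_0,L^\vee|_{\overline C_0})$; a global section of $L^\vee|_{\overline C_0}$ must vanish on the positive--degree branch (a negative line bundle on $\PP^1$ has no sections), and then --- using the explicit equations cutting out $\mathscr O_{\overline C_0}$ at the cusp, tacnode, or elliptic $m$-fold point, together with non-positivity of $L^\vee$ on the remaining branches --- it vanishes identically; hence $H^1(\overline C_0,L|_{\overline C_0})=0$. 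A Mayer--Vietoris sequence over the normalizations at the points $q_j$, using the global generation on the $T_j$, then gives $H^1(\overline C_s,L)=0$. Finally, Riemann--Roch together with this vanishing gives $h^0(\overline C_s,g_s^\star T_{\PP^r})=\chi(\overline C_s,g_s^\star T_{\PP^r})=\deg g_s^\star c_1(T_{\PP^r})+r\,\chi(\mathscr O_{\overline C_s})=(r+1)d$, using $\chi(\mathscr O_{\overline C_s})=0$, which is the rank of the relative tangent bundle and yields the dimension formula.

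The main obstacle is the deformation-theoretic input of the second paragraph: that the factorization property rigidifies the moduli problem enough that the relative obstruction theory over $\fM_{1,n}^{\mathrm{rad}}$ is governed by the contracted curve $\overline C$ and the map $g$, rather than by $C$ and $f$. One must in particular understand the first--order deformations of $f$ that would ``un-contract'' part of the genus~$1$ core --- the very directions responsible for the obstructedness of $\Mbar_{1,n}(\PP^r,d)$ --- and check that the factorization condition organizes them so that the relevant obstruction group is $H^1(\overline C,g^\star T_{\PP^r})$ and not the larger $H^1(C,f^\star T_{\PP^r})$. This is exactly the analysis that required a delicate study of the local structure of the ordinary moduli space in the work of Vakil--Zinger and Hu--Li; the point of the present framework is to make it conceptual. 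Once this structural statement is established, the cohomology vanishing is comparatively soft, resting only on the triviality of the dualizing sheaf of the genus~$1$ core (Proposition~\ref{prop:dualizing-trivial}) and on the positivity built into the ``nonconstant on a branch'' clause in the definition of $\VZ_{1,n}(\PP^r,d)$.
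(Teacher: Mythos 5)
Your cohomological input is correct and is in substance the paper's Lemma~\ref{lem:h1'} (triviality of the dualizing sheaf of the Gorenstein core via Proposition~\ref{prop:dualizing-trivial}, duality, and positivity on one branch forced by the ``nonconstant on a branch'' clause); the paper simply avoids your Euler-sequence detour by describing the map to $\PP^r$ as a line bundle with $r+1$ sections and working over the universal Picard stack, so that the only obstruction group appearing is $H^1(\overline C, L)$. The genuine gap is the step you yourself flag as ``the main obstacle'': the identification of the relative deformation/obstruction theory of $\VZ_{1,n}(\PP^r,d)$ over $\fM_{1,n}^{\mathrm{rad}}$ with that of the contracted map $g:\overline C\to\PP^r$ is asserted but never proved, and you mischaracterize it as requiring the delicate local analysis of Vakil--Zinger and Hu--Li. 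In the paper's framework this step is short, and seeing why is the whole point: the contraction is constructed universally over the base (Propositions~\ref{locallyfree} and~\ref{contraction}), so in a lifting problem for a square-zero extension $S\subset S'$ the underlying reduced scheme, hence the characteristic monoids, the contraction radius $\delta$, the modification $\widetilde C'$ and the Gorenstein contraction $\overline C'$ are all already determined by the composite $S'\to\fM_{1,n}^{\mathrm{rad}}$. A lift of $f$ with the factorization property is then precisely a lift of $(L,s_0,\dots,s_r)$ to $\overline C'$: any such lift descends back to a map from $C'$, because constancy on the fibres of $\widetilde C'\to C'$ may be checked on geometric fibres, which factor through $S$, where it holds by hypothesis. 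Deforming $L$ is unobstructed on a curve, and deforming the sections is obstructed by $H^1(\overline C,L)$, which vanishes by Lemma~\ref{lem:h1'}. Without this argument your proof does not close; with it, your reduction to $H^1(\overline C,g^\star T_{\PP^r})$ is unnecessary (though harmless).

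A secondary point: the obstruction theory that actually defines the virtual class (Theorem~\ref{thm: vfc}) is $R\pi_\star(f^\star T_{\PP^r})^\vee$ computed on the prestable source $C$, and, as you note yourself, its $h^1$ does \emph{not} vanish at boundary points where a reducible genus-one core is contracted. So ``vanishing of the obstruction forces the virtual class to equal the fundamental class'' is not literally available for that theory. The correct deduction is: smoothness (from the unobstructedness above together with Corollary~\ref{cor:smooth}), irreducibility, equality of actual and virtual dimension, and the vanishing of $R^1\pi_\star f^\star T_{\PP^r}$ on the dense open locus of maps from smooth curves, which identifies $[\VZ_{1,n}(\PP^r,d)]^{\mathrm{vir}}$ with the fundamental class by restriction to that locus. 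Your Riemann--Roch computation of the relative dimension $(r+1)d$ on $\overline C$ is fine.
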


We begin with a lemma that is more general than we need at this stage, but will be useful when we consider quasimaps in the sequel.

\begin{lemma} \label{lem:h1'}
Let $C$ be a Gorenstein curve of genus~$1$ and let $L$ be a line bundle on $C$ that has degree $\geq 0$ on all components and positive degree on at least one component of the circuit of $C$.  Then $H^1(C, L) = 0$.
\end{lemma}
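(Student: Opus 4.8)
The plan is to induct on the number of irreducible components of $C$ not contained in the circuit, peeling off a rational tail at each stage, with the base case of the induction handled by Serre duality. The key structural input, which I would record first, is that the circuit $C_0$ of $C$ is a connected Gorenstein curve of genus~$1$ with no genus~$0$ tails, so by the classification recalled in the proof of Proposition~\ref{prop:dualizing-trivial} it is smooth elliptic, a ring of rational curves, or an elliptic $m$-fold point, and $C \setminus C_0$ is a disjoint union of trees of smooth rational curves attached nodally; in particular $\omega_{C_0} \simeq \mathscr O_{C_0}$.

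For the base case, $C$ equals its circuit, so $\omega_C \simeq \mathscr O_C$ and Serre duality reduces the claim to $H^0(C, L^\vee) = 0$. Since $L$ has strictly positive degree on some component $C_i$ of the circuit, the bundle $L^\vee$ has negative degree on $C_i$, so any global section $s$ of $L^\vee$ vanishes identically on $C_i$. Let $Z \subseteq C$ be the union of those components on which $s$ vanishes; it contains $C_i$. If $Z \neq C$, then by connectedness some component $C_j \not\subseteq Z$ meets $Z$ at a point $q$; because $L^\vee$ is locally free and $s$ vanishes on a component of $Z$ through $q$, we get $s(q) = 0$, hence $s|_{C_j}$ is a section of a degree-$\leq 0$ line bundle on $C_j \simeq \PP^1$ with a zero, forcing $s|_{C_j} = 0$ and contradicting maximality of $Z$. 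Thus $Z = C$ and $s = 0$.

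For the inductive step, choose a leaf of the dual graph of $C$: a component $C_1 \simeq \PP^1$ meeting $C' := \overline{C \setminus C_1}$ at a single node $p$. Then $C'$ is connected, Gorenstein, of genus~$1$, has the same circuit as $C$ and one fewer non-circuit component, and $L|_{C'}$ still has nonnegative degree on every component and positive degree on a circuit component, so the inductive hypothesis gives $H^1(C', L|_{C'}) = 0$. Twisting the ideal-sheaf sequence of $C' \hookrightarrow C$ by $L$ yields
\[
0 \to L|_{C_1}(-p) \to L \to L|_{C'} \to 0,
\]
and since $\deg L|_{C_1} \geq 0$ the line bundle $L|_{C_1}(-p) \simeq \mathscr O_{\PP^1}(\deg L|_{C_1} - 1)$ has vanishing $H^1$. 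The long exact sequence then squeezes $H^1(C,L)$ between $H^1(C_1, L|_{C_1}(-p)) = 0$ and $H^1(C', L|_{C'}) = 0$, giving $H^1(C, L) = 0$. The only delicate point is the structural input in the first paragraph — verifying the shape of the circuit and that tails are attached at nodes (so that the leaf $C_1$ is a $\PP^1$ meeting the rest transversely); granting that, both the base case and the inductive step are essentially formal.
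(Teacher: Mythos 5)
Your proof is correct and is essentially the paper's argument: the paper likewise reduces $H^1(C,L)$ to the circuit $C_0$, uses triviality of $\omega_{C_0}$ (Proposition~\ref{prop:dualizing-trivial}) and Serre duality, and kills $H^0(C_0,L_0^\vee)$ by the same degree considerations. Your leaf-peeling induction and the connectedness propagation for $H^0$ simply spell out the two steps the paper asserts in one line each, so there is no substantive difference in approach.
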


\begin{proof}
Let $C_0$ be the circuit component of $C$.  Then $H^1(C,L) = H^1(C_0,L_0)$, where $L_0$ denotes the restriction of $L$ to $C_0$.  The dualizing sheaf of $C_0$ is trivial (Proposition~\ref{prop:dualizing-trivial}), so $H^1(C,L)$ is dual to $H^0(C_0,L_0^\vee)$, which vanishes because $L_0^\vee$ has negative degree on at least one component of $C_0$ and degree $\leq 0$ on all other components.
\end{proof}

\begin{proof}[Proof of Theorem~\ref{thm: vz-smoothness}]
We will show that the map
\[
\pi: \VZ_{1,n}(\PP^r,d)\to \fM_{1,n}^{\mathrm{rad}}
\]
	is relatively unobstructed, and in fact that the map to the universal Picard stack is unobstructed.  The theorem will then follow from the smoothness of $\fM_{1,n}^{\mathrm{rad}}$ proved in Corollary~\ref{cor:smooth}.  Consider a lifting problem
\begin{equation*} \xymatrix{
S \ar[r] \ar[d] & \VZ_{1,n}(\PP^r,d) \ar[d] \\
S' \ar[r] \ar@{-->}[ur] & \fM_{1,n}^{\mathrm{rad}}
} \end{equation*}
in which $S'$ is a square-zero extension of $S$.  We view these data as a (minimal) radially aligned curve $C'$ over $S'$ restricting to $C$ over $S$ and a map $\overline C \to \PP^r$ that is nonconstant on at least one branch of the singular point of each fiber, and nonconstant on the genus~$1$ component when there is no singular point.  The map to $\PP^r$ can be seen as a line bundle $L$ on $\overline C$ with $r+1$ sections.  There is no obstruction to deforming $L$ to a line bundle $L'$ on $\overline C'$: obstructions lie in $H^2(\overline C, L)$.  The obstruction to deforming the sections is in $H^1(\overline C, L)$, which vanishes (locally in $S$) by Lemma~\ref{lem:h1'}, since $\overline C \to \PP^r$ is nonconstant on at least one branch of the singular point of each fiber.
\end{proof}

\begin{remark}
The proof shows  that $\VZ_{1,n}(\PP^r,d)$ is smooth and unobstructed relative to the universal Picard stack over $\fM_{1,n}^{\rm rad}$, since there is no restriction on the deformation of the line bundle used to deform the map.
\end{remark}

\subsection{The Vakil--Zinger blowup construction}\label{vz-comparison} 

In this section, we give a modular interpretation of Vakil and Zinger's blowup construction. This requires a mild variation of our moduli problem, where we replace \textbf{radial} alignment curves with the slightly more refined notion of \textbf{central} alignment. We begin with a review of Vakil and Zinger's construction. 

\numberwithin{theorem}{subsubsection}
\subsubsection{Vakil and Zinger's blowups}
Let $\fM_{1,n}$ be the moduli stack of $n$-pointed, genus~$1$ prestable curves.  For each geometric point $s$ of $\fM_{1,n}$, we write $\plC_s$ for the tropicalization of the corresponding curve.

Suppose that $\plC$ is a tropical curve of genus~$1$.  By a \textbf{precontractible tropical subcurve} or a \textbf{precontractible subcurve} for short, we will mean a subgraph $\plC^\circ \subset \plC$ that is either empty or such that
\begin{enumerate}
\item $\plC^\circ$ has genus~$1$, 
\item if $v\in \plC^\circ$, then any half-edge incident to $v$ is contained in $\plC^\circ$, and
\item the marking function on $\plC^\circ$ is the restriction of the marking function on $\plC$.
\end{enumerate}

We will think of the precontractible subcurve $\plC^\circ$ as being the information of a ``would be'' contracted subcurve. Let $\fM_{1,n}^{\dagger}$ denote the moduli space of of nodal $n$-pointed genus $1$ curves together with the additional information of a precontractible subgraph $\plC_s^\circ \subset \plC_s$ at each geometric point, such that, if $t \leadsto s$ is a geometric specialization then the complement of $\plC^\circ_s$ maps onto the complement of $\plC^\circ_t$.  In other words, a component that is not  ``contracted'' generizes to a component that is not formally ``contracted''.

\begin{figure}

\begin{minipage}[t]{.45\textwidth}
\begin{tikzpicture}

\begin{scope}[shift={(-5,0)}]

\coordinate (root) at (0,.75);
\coordinate (A) at (2,-1);
\coordinate (B) at (-2,-1);
\node (C) at (3.5,-4) {$5$};
\coordinate (D) at (1.5,-2);
\node (E) at (2,-4) {$4$};
\node (F) at (1,-4) {$3$};
\node (G) at (-3,-4) {$1$};
\node (H) at (-1,-4) {$2$};

\draw[color=gray!50,dashed,thick] (root) -- (A); \node at (1.25,0) {$\beta$};
\draw[color=gray!50,dashed,thick] (root) -- (B); \node at (-1.25,0) {$\alpha$};
\draw[color=gray!50,thick] (A) -- (C); 
\draw[color=gray!50,thick] (A) -- (D); \node at (1.5,-1.4) {$\gamma$};
\draw[color=black,thick] (D) -- (E);
\draw[color=black,thick] (D) -- (F);
\draw[color=black,thick] (B) -- (G);
\draw[color=black,thick] (B) -- (H);

\draw[color=black,fill=white] (root) circle (1mm);
\draw[color=gray!50,fill=gray!50] (A) circle (.75mm);
\draw[color=black,fill=black] (B) circle (.75mm);
\draw[color=black,fill=black] (D) circle (.75mm);

\end{scope}
\end{tikzpicture}

\captionsetup{width=\linewidth}

\caption{A genus $1$ graph containing a precontractible subgraph shown in gray and a smaller precontractible subgraph shown in dashed gray.  The smaller precontractible subgraph has $k = 2$ and $J = \emptyset$; the larger one has $k = 2$ and $J = \{ 5 \}$.  As usual, the open circle represents a vertex of genus~$1$ or a ring of genus~$0$ vertices.}

\label{fig:precontractible}

\end{minipage} \qquad
\begin{minipage}[t]{.45\textwidth}

\begin{tikzpicture}
\begin{scope}[scale=2.5,shift={(1.5,0)}]

\coordinate (A) at (.85,-.5);
\coordinate (B) at (-.85,-.5);
\coordinate (C) at (0,1);

\draw[color=black] (A) -- (B) -- (C) -- (A);
\draw[color=gray!50,dashed,thick] (-.425,.25) -- (A);
\draw[color=gray!50,thick] (-.425,.25) -- (.425,.25);

\draw[color=black,fill=black] (A) circle (.3mm); \node at (1.1,-.6) {$\gamma=1$};
\draw[color=black,fill=black] (B) circle (.3mm); \node at (-1.1,-.6) {$\beta=1$};
\draw[color=black,fill=black] (C) circle (.3mm); \node at (0,1.15) {$\alpha=1$};

\end{scope}

\end{tikzpicture}

\captionsetup{width=\linewidth}

\caption{Barycentric coordinates on the tropicalization of the deformation space of the tropical curve in Figure~\ref{fig:precontractible} and the subdivision induced by blowing up $\Upsilon(2,\emptyset)$ followed by the proper transform of $\Upsilon(2,\{ 5 \})$.}

\end{minipage}

\end{figure}

The definition of $\fM_{1,n}^\dagger$ realizes it as an \'etale sheaf over $\fM_{1,n}$, and $\fM_{1,n}^\dagger$ is representable by the espace \'etal\'e of that sheaf.  In particular, $\fM_{1,n}^\dagger$ is an algebraic stack and there is a projection map
\[
\fM_{1,n}^{\dagger}\to \fM_{1,n}
\]
that is \'etale but not separated.

The morphism $\Mbar_{1,n}(\PP^r,d)\to \fM_{1,n}$ can be factored through $\fM_{1,n}^\dagger$ by formally declaring components of a family $[f: C\to \PP^r]$ to be ``contracted'' when they are contracted by $f$, so we have
\[
\Mbar_{1,n}(\PP^r,d)\to \fM_{1,n}^\dagger.
\]

\begin{construction} \label{cons:upsilon}
Fix a non-negative integer $k$ and a subset $J \subset \{ 1, \ldots, n \}$.  By a \textbf{$(k,J)$-graph}, we will mean a tropical curve with a single vertex, of genus~$1$, and $k + |J|$ legs, with $|J|$ of them marked by the set $J$.

We write $\Upsilon(k,J) \subset \fM_{1,n}^\dagger$ for the locus of curves $C$ with tropicalization $\plC$ such that the subgraph marked for contraction $\plC^\circ \subset \plC$ has a precontractible subcurve with a weighted edge contraction onto a $(k,J)$-graph. This locus is a closed substack, as it is a union of closed strata in the stratification of $\fM_{1,n}^\dagger$ induced by its normal crossings boundary divisor. See~\cite[Section 2.6]{HL10} and~\cite[Section 1.2]{VZ08} for the corresponding loci in those setups.

Define a partial order
\[
(k',J')\preccurlyeq (k,J),
\]
if the strata are not equal, $k'\leq k$ and $J_E'\subset J_E$, and write $(k',J') \prec (k,J)$ to mean that at least one of these relations is strict. Choose any total ordering on the strata $\{\Upsilon(k,J)\}$ extending the partial order above. Let $\widetilde{\fM}_{1,n}^{\dagger}$ be the iterated blowup of $\fM_{1,n}^{\dagger}$ along the proper transforms of the loci $\Upsilon(k,J)$ in the order specified by the total order. It is part of~\cite[Theorem 1.1]{VZ08} that the resulting space is insensitive to the choice of total order extending $\preccurlyeq$. Note that each connected component of the stack $\widetilde{\fM}_1^{\dagger}$ is of finite type where only finitely many of the loci $\Upsilon(k,J)$ are non-empty, so the limit of this procedure is well-defined, as an algebraic stack. Using the morphism
\[
\Mbar_{1,n}(\PP^r,d)\to \fM_{1,n}^{\dagger},
\]
define the stack $\widehat{\cM}_1(\PP^r,d)$ as the proper transform
\[
\widehat{\cM}_{1,n}(\PP^r,d) := \Mbar_{1,n}(\PP^r,d)\times_{\fM_{1,n}^{\dagger}} \widetilde{\fM}_{1,n}^{\dagger}.
\]
Then the \textbf{Vakil--Zinger desingularization} of the main component of $\Mbar_{1,n}(\PP^r,d)$ is defined as the closure
\[
\widetilde{\cM}_1(\PP^r,d):= \overline{\left\{[f: C\to \PP^r]: \textrm{$C$ is a smooth curve of genus $1$}\right\}}
\]
inside $\widehat M_{1,n}(\PP^r, d)$.
\end{construction}

\subsubsection{Centrally aligned curves}
In Section~\ref{sec:aligned}, we introduced radial alignment as the datum necessary to contract a genus~$1$ component of a logarithmic curve $C$.  It is actually possible to construct a contraction with strictly less information. 

All that is really necessary is a radius dividing the tropicalization of $C$ into an interior, to be contracted, and an exterior, without the imposition of order between the individual vertices.  This leads to a logarithmically smooth, but non-smooth modification of the moduli space of curves~\cite{KSP-Thesis}, but the singularities can be resolved by ordering just the vertices of the interior.  To first approximation, this is the notion of a central alignment.

\begin{definition}
Let $C$ be a genus~$1$ logarithmic curve over $S$ with tropicalization $\plC$.  A \textbf{central alignment} of $C$ is the choice of $\delta \in \overline M_S$ such that
\begin{enumerate}
\item $\delta$ is comparable to $\lambda(v)$ for all vertices $v$ of $\plC$, and
\item the interior of the circle of radius $\delta$ around the circuit of $\plC$ is radially aligned.
\end{enumerate}
A central alignment on a family of curves over $S$ is a section of $\overline M_S$ that gives a central alignment of each geometric fiber.

If $\delta = \lambda(v)$ for at least one vertex $v$ of $\plC$ and the subgraph of $\plC$ where $\lambda < \delta$ is a stable curve then we call the central alignment \textbf{stable}.  A family of central alignments is stable if each of its fibers is stable.

We write $\mathfrak M_{1,n}^\ctr$ for the space of logarithmic curves of genus~$1$ with~$n$ markings and a stable central alignment.
\end{definition}

\begin{proposition}
$\mathfrak M_{1,n}^\ctr$ is a logarithmic modification of $\mathfrak M_{1,n}^\dagger$, and in particular is representable by an algebraic stack with a logarithmic structure and is logarithmically smooth.
\end{proposition}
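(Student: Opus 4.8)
The plan is to run the argument of Proposition~\ref{prop:modification} again, with two changes: the global comparability of the ``distance from the circuit'' functions $\lambda(v)$ is replaced by comparability of $\lambda(v)$ only for the vertices ranging over the relevant ``interior'' band, and one records in addition the discrete datum pinning down the contraction radius $\delta$ along the lines of Construction~\ref{cons:upsilon}. Since being a logarithmic modification is local on the target, it suffices to treat a smooth cover $S \to \mathfrak M_{1,n}^\dagger$ (equipping $\mathfrak M_{1,n}^\dagger$ with the logarithmic structure pulled back along the \'etale map to $\fM_{1,n}$) and show that the base change $S \mathop\times_{\mathfrak M_{1,n}^\dagger} \mathfrak M_{1,n}^\ctr \to S$ is a logarithmic modification. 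Passing to a further cover, I may assume $\overline M_S$ has a global chart by a sharp monoid $P$ with dual cone $\sigma$, and that the universal genus~$1$ logarithmic curve $C\to S$, together with its tropicalization $\plC$, the function $\lambda$ valued in $P$, and the precontractible subgraph $\plC^\circ \subseteq \plC$ recorded by the map to $\mathfrak M_{1,n}^\dagger$, is pulled back from $V = \Spec\ZZ[P]$.

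The heart of the matter is then to recognise the functor $S \mathop\times_{\mathfrak M_{1,n}^\dagger} \mathfrak M_{1,n}^\ctr$ on logarithmic schemes over $S$ as (the base change of) a toric modification of $V$. Unwinding the definitions, a point over $T\to S$ is a logarithmic modification $T \to S$ over which the transform of $\plC^\circ$ becomes the interior $\{\lambda < \delta\}$ of a circle of some radius $\delta$ that is comparable to all $\lambda(v)$, equals $\lambda(v_0)$ for a vertex $v_0$, has $\{\lambda < \delta\}$ stable, and is \emph{radially aligned} in its interior --- i.e. the $\lambda(v)$ with $\lambda(v) < \delta$ are totally ordered. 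Because the contracted subgraph is prescribed by $\plC^\circ$ and $\delta = \lambda(v_0)$, the radius $\delta$ is determined, and the only genuinely new datum is a total order on the interior vertices refining the partial order induced by $\lambda$ --- exactly the radial-alignment datum of Section~\ref{sec:aligned}, applied to the vertices inside the circle. Each admissible choice of $v_0$ together with each such total order cuts out, via the corresponding (in)equalities among the linear functions $\lambda(v)$ on $\sigma$, a rational polyhedral subcone of $\sigma$; let $\Sigma$ be the fan they generate. As in Proposition~\ref{prop:modification}, $\Sigma \to \sigma$ is the universal subdivision on whose cones the relevant functions become pairwise comparable, so if $W$ is the toric variety of $\Sigma$ then
\[
S \mathop\times_{\mathfrak M_{1,n}^\dagger} \mathfrak M_{1,n}^\ctr \;\simeq\; S \mathop\times_V W ,
\]
which is a logarithmic modification of $S$ in the sense of Definition~\ref{def:log-mod} (Section~\ref{sec:log-blowup}).

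The remaining assertions follow as in Corollary~\ref{cor:algebraic}: $\mathfrak M_{1,n}^\dagger$ is algebraic (it is the espace \'etal\'e of an \'etale sheaf over $\fM_{1,n}$) and logarithmically smooth for the logarithmic structure above, and a logarithmic modification of a logarithmically smooth algebraic stack is again algebraic, carries a logarithmic structure, and is logarithmically smooth; hence so is $\mathfrak M_{1,n}^\ctr$.

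The step I expect to be the main obstacle is the combinatorial verification in the middle paragraph: one must pin down precisely the dictionary between a precontractible subgraph $\plC^\circ$ and the contraction radius $\delta$ (after the semistable modification built into $T\to S$), confirm that the collection of cones above is genuinely a subdivision of $\sigma$ --- in particular that every metrization admits a stable central alignment inducing $\plC^\circ$, an existence statement in the spirit of Proposition~\ref{prop:m-stable} --- and check that these cones glue along common faces under edge contraction and geometric specialisation. This is exactly where the precontractibility axioms (closure of $\plC^\circ$ under incident half-edges, and the specialisation compatibility built into the definition of $\mathfrak M_{1,n}^\dagger$) are used, and where the argument genuinely differs from the ``total order on all vertices'' situation of Proposition~\ref{prop:modification}.
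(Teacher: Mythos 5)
Your proposal is correct and follows essentially the same route as the paper: the paper's own proof simply defines the map to $\mathfrak M_{1,n}^\dagger$ by declaring the interior of the circle of radius $\delta$ to be the ``contracted'' subgraph, runs the chart-and-cone-subdivision argument of Proposition~\ref{prop:modification} verbatim, and deduces logarithmic smoothness from logarithmic \'etaleness over $\mathfrak M_{1,n}$. Your explicit description of the cones (choice of minimal exterior vertex $v_0$ plus a total order on the interior vertices) and your flagging of the dictionary between $\plC^\circ$ and $\delta$ just spell out details the paper leaves implicit.
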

\begin{proof}
We have a map $\mathfrak M_{1,n}^\ctr$ by declaring formally that the interior of the circle of radius $\delta$ is ``contracted''.  Then the rest of the proof of algebraicity is the same as that of Proposition~\ref{prop:modification}.  Logarithmic smoothness follows because it is logarithmically \'etale over the logarithmically smooth stack $\mathfrak M_{1,n}$.
\end{proof}

\begin{remark}
If the first part of the definition of a stable central alignment is omitted then the value $\delta$ can introduce a new parameter to the logarithmic structure of the moduli space.  Scaling this parameter gives a continuous family of automorphisms.
\end{remark}

\subsubsection{Comparing the constructions}

\begin{proposition} \label{prop:blowup-alignment}
The Vakil--Zinger blowup $\widetilde{\mathfrak M}_{1,n}^\dagger$ is the moduli space $\mathfrak M_{1,n}^\ctr$ of central alignments on logarithmic curves of genus~$1$.
\end{proposition}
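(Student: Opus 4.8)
The plan is to show that both $\widetilde{\mathfrak M}_{1,n}^\dagger$ and $\mathfrak M_{1,n}^\ctr$ represent the same functor of logarithmic modifications over $\mathfrak M_{1,n}^\dagger$, by matching up the tropical combinatorics on both sides. Since a logarithmic modification is determined locally by a subdivision of cones in the tropicalization, it suffices to work over an atomic neighborhood $S$ of a geometric point, where $\overline M_S$ has a global chart by a monoid $P$ dual to a cone $\sigma$, the dual graph of the universal curve $C$ and its precontractible subgraph $\plC^\circ$ are constant, and the tropicalization $\plC$ is metrized by $P$. I would then describe both subdivisions of $\sigma$ explicitly.

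First I would handle the centrally aligned side. Giving a central alignment over $S$ means choosing $\delta \in \overline M_S$ comparable to each $\lambda(v)$ with the interior of the circle radially aligned and stable; this is a logarithmic modification because it imposes comparability conditions on elements of $\overline M_S$, exactly as in Section~\ref{sec:log-blowup}. Concretely, the induced subdivision of $\sigma$ is obtained by cutting along the hyperplanes $\{\lambda(v) = \lambda(w)\}$ for vertices $v, w$ in the precontractible subgraph $\plC^\circ$, together with the walls recording which vertices lie inside versus outside the circle of radius $\delta$ — but since $\delta$ itself is constrained to equal some $\lambda(v)$ for the alignment to be stable, these walls are again of the form $\{\lambda(v) \lessgtr \lambda(w)\}$. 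So the centrally aligned subdivision is the coarsest subdivision of $\sigma$ on which the linear functions $\{\lambda(v) : v \in \plC^\circ\}$ become totally ordered.

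Next I would identify the Vakil--Zinger side with the same subdivision. The locus $\Upsilon(k,J) \subset \fM_{1,n}^\dagger$ is, tropically, the union of cones where the precontractible subcurve contracts onto a $(k,J)$-graph; its defining equations in $\sigma$ are again of the form "the sum of certain edge lengths $\leq$ the sum of certain others," i.e. inequalities $\lambda(v) \leq \lambda(w)$. Iteratively blowing up the $\Upsilon(k,J)$ in an order refining $\preccurlyeq$ corresponds, on cones, to the standard star-subdivision procedure, and the content of \cite[Theorem~1.1]{VZ08} that the result is independent of the chosen total order translates into the statement that the resulting fan is the canonical one cut out by the hyperplanes $\{\lambda(v) = \lambda(w)\}$ for $v,w$ ranging over vertices of the precontractible subcurve. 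Matching these two descriptions of the subdivision, and checking that the universal properties agree (a map $T \to \fM_{1,n}^\dagger$ lifts to $\widetilde{\mathfrak M}_{1,n}^\dagger$ iff the pullbacks of the $\Upsilon(k,J)$-ideals are locally principal iff the $\lambda(v)$ become locally comparable over the precontractible subcurve iff $T$ carries a stable central alignment), gives the isomorphism; I would then note it is canonical, hence glues over all of $\mathfrak M_{1,n}^\dagger$.

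The main obstacle I anticipate is the bookkeeping in the second paragraph: translating the iterated-blowup-along-$\Upsilon(k,J)$ recipe into a clean statement about the hyperplane arrangement $\{\lambda(v) = \lambda(w)\}$, and in particular checking that the proper transforms taken in the VZ construction produce exactly the subdivision where all these functions are comparable — no coarser and no finer. This is essentially a combinatorial identity about star subdivisions of a simplicial cone along coordinate subspaces, and reconciling the two indexing conventions (by $(k,J)$-pairs versus by pairs of vertices) is where the care is needed; the order-independence in \cite[Theorem~1.1]{VZ08} is the key input that makes it go through.
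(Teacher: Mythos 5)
Your overall strategy---recognizing that both sides are logarithmic modifications of $\mathfrak M_{1,n}^\dagger$ and can therefore be compared by their effect on characteristic monoids/tropicalizations---is the same as the paper's, and the universal-property framing at the end of your second paragraph is the right thing to check. But the execution has two substantive gaps that the paper's proof fills with a careful two-direction pointwise argument, and which you have flagged without resolving.

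First, your description of the centrally aligned subdivision is not quite right. A stable central alignment requires $\delta$ to be comparable to $\lambda(v)$ for \emph{all} vertices of $\plC$ (not just those in $\plC^\circ$), to equal $\lambda(v_0)$ for some $v_0$, and the subgraph where $\lambda < \delta$ must recover the given precontractible $\plC^\circ$ and be stable. After eliminating $\delta$ (since it coincides with some $\lambda(v_0)$), these conditions impose a total order on $\{\lambda(w): w \in \plC^\circ\}$ \emph{together with} the existence of a minimum among the $\lambda(u)$ for $u$ on the periphery exceeding all $\lambda(w)$. The latter is a principality condition, not the full hyperplane arrangement $\{\lambda(u_1)=\lambda(u_2)\}$ over all exterior pairs; so ``coarsest subdivision making the $\lambda(v)$ for $v\in\plC^\circ$ totally ordered'' is an oversimplification.

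Second, and more importantly, the assertion that the iterated VZ blowup yields ``the canonical fan cut out by the hyperplanes $\{\lambda(v)=\lambda(w)\}$'' does not follow from the order-independence statement in \cite[Theorem~1.1]{VZ08}. Order-independence says the result does not depend on the choice of total order refining $\preccurlyeq$; it does not identify what the result is. This identification is precisely the content of the proposition and cannot be cited away. The paper handles it by two explicit, reciprocal arguments at geometric points: given a stable central alignment, it exhibits for each vertex $v$ of $\plC^\circ$ a $(k(v),J(v))$ such that the curve lies in the blowup of $\widetilde\Upsilon(k(v),J(v))$, using the total order; and conversely, given a point of $\widetilde{\mathfrak M}_{1,n}^\dagger$, it constructs the total order on $\plC^\circ$ by induction, at each step invoking the principality of the ideal of $\widetilde\Upsilon(k(v_i),J(v_i))$ to extract the next minimal vertex $v_{i+1}$. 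This is exactly the ``bookkeeping'' you correctly identified as the obstacle; the proof is not complete without it.
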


\begin{proof}
The Vakil--Zinger blowups are logarithmic blowups, and therefore are equivalent to imposing order relations in the characteristic monoid $\overline M_S$ (see Section~\ref{sec:log-blowup}). Said differently, viewing $\overline M_S$ as the set of positive elements of the partially ordered group $\overline M_S^{\rm gp}$, the blowup is equivalent to refining this partial order. It follows that the Vakil--Zinger blowup $\widetilde{\mathfrak M}_{1,n}^\dagger$ represents a logarithmic \emph{subfunctor} of $\mathfrak M_{1,n}^\dagger$.  We show that the order imposed on the characteristic monoid by a stable central alignment is the same as the order imposed by the Vakil--Zinger blowups.

Because the sheaf of characteristic monoids is constructible, this is a pointwise assertion.  We must therefore prove that, if $S$ is the spectrum of an algebraically closed field, equipped with a logarithmic structure, then an $S$-point $[C]$ of $\mathfrak M_{1,n}^\dagger$ lies in $\widetilde{\mathfrak M}_{1,n}^\dagger(S)$ if and only if it lies in $\mathfrak M_{1,n}^\ctr(S)$.

Assume first that $[C]$ lies in $\mathfrak M_{1,n}^\ctr(S)$.  Let $\plC$ be the tropicalization of $C$ and let $\plC^\circ$ be the induced subgraph on the vertices $v$ such that $\lambda(v) < \delta$, equipped with the restriction of the marking, length, and genus functions.  We write $\widetilde\Upsilon(k,J)$ for the pullback of $\Upsilon(k,J)$ to $S$.

By definition of a central alignment, the vertices $v$ of $\plC^\circ$ are totally ordered by the lengths $\lambda(v)$.  Each $\lambda(v)$ therefore determines a circle on $\plC$, which crosses $k(v)$ finite edges of $\plC$ and $J(v)$ infinite legs.  We observe that, as $[C]$ lies in $\widetilde\Upsilon(k,J)$ if and only if the interior of the circle of radius $\lambda(v)$ has a weighted edge contraction onto a $(k,J)$-curve, this can occur only if $(k,J) = (k(v), J(v))$ for some vertex $v$ of $\plC^\circ$.

Blowing up $\widetilde\Upsilon(k(v),J(v))$ has the effect of requiring a minimum $\lambda(w)$ among the vertices $w$ of $\plC$ immediately outside the circle of radius $\lambda(v)$.  Since the vertices of $\plC^\circ$ are totally ordered by definition, and there is at least one vertex $w$ immediately outside of $\plC^\circ$ with $\lambda(w) = \delta$, we find that $[C]$ is contained in the blowup of $\widetilde\Upsilon(k(v),J(v))$, as required.

Now we prove that sequentially blowing up the $\Upsilon(k,J)$ imposes a central alignment.  Suppose that $[C]$ is an $S$-point of $\widetilde{\mathfrak M}_{1,n}^\dagger$, let $\plC$ be the tropicalization of $C$, and let $\plC^\circ$ be the formally contracted subgraph.  Write $\plC_0^\circ$ circuit of $\plC^\circ$, with the induced marking function.  Then, by contracting the circuit, $\plC_0^\circ$ contracts onto a $(k,J)$-graph.  Therefore $[C]$ lies in $\widetilde\Upsilon(k,J)$.

Since $[C]$ lies in $\widetilde{\mathfrak M}_{1,n}^\dagger$, the locus $\widetilde\Upsilon(k,J)$ has been blown up.  By definition of the logarithmic blowup (see Section~\ref{sec:log-blowup}), this means that there is a vertex of $\plC$ on the periphery of $\plC^\circ_0$ that is minimal with respect to $\lambda$.  We call this vertex $v_0$.

Now we proceed by induction.  Assume that we have already found vertices $v_0, v_1, \ldots, v_i$ such that $v_j$ is minimal among the vertices of $\plC^\circ$, excluding $v_0, \ldots, v_{j-1}$.  Then the circle of radius $\lambda(v_i)$ crosses $\plC$ at $k(v_i)$ edges and $J(v_i)$ legs.  Therefore $[C]$ is contained in $\widetilde\Upsilon(k(v_i), J(v_i))$.  

Exactly as in the base case, $\widetilde\Upsilon(k(v_i),J(v_i))$ has been blown up, so there is a $v_{i+1}$ in the periphery of $\plC^\circ_i$ such that $\lambda(v_i)$ is minimal.  The induction proceeds until we run out of vertices in $\plC^\circ$ and the vertices are therefore totally ordered.
\end{proof}

For proper $Y$, we may now define a stack $\widetilde \VZ\vphantom{\VZ}^\ctr_{1,n}(Y,\beta)$ of stable maps from the universal centrally aligned curve to $X$, via a fiber product:
\[
\begin{tikzcd}
\widetilde{\VZ}\vphantom{\VZ}_{1,n}^\ctr(Y,\beta) \arrow{d} \arrow{r} & \Mbar_{1,n}(Y,\beta) \arrow{d}\\
\fM_{1,n}^\ctr \arrow{r} & \fM_{1,n}.
\end{tikzcd}
\]

Just as in Section~\ref{sec:target-proj}, given a map from a centrally aligned curve $[f: C\to Y]$ over a logarithmic scheme $S$, we obtain a radius $\delta_f$, which is the distance from the genus $1$ contracted component to the closest non-contracted component of $C$, and thus a contracted curve $\widetilde C\to \overline C$ from a partial destabilization of $C$. We define the stack $\VZ^\ctr_{1,n}(Y,\beta)$ to be the locus of maps satisfying the \textbf{factorization property}, as before.  The proofs of smoothness and properness go through exactly as in Section~\ref{sec:target-proj}.

\begin{theorem} \label{thm:vz-text}
There is an isomorphism between the Vakil--Zinger blowup with the moduli space of centrally aligned maps to $\PP^r$
\[
\VZ^\ctr_{1,n}(\PP^r,d)\to \widetilde{\cM}_{1,n}(\PP^r,d)
\]
that commutes with the projection to $\Mbar(\PP^r,d)$.
\end{theorem}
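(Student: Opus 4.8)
The plan is to exhibit both $\VZ^\ctr_{1,n}(\PP^r,d)$ and $\widetilde{\cM}_{1,n}(\PP^r,d)$ as the same reduced closed substack of $\widehat{\cM}_{1,n}(\PP^r,d)$, using Proposition~\ref{prop:blowup-alignment} as the dictionary between the two constructions. By that proposition $\widetilde{\fM}_{1,n}^\dagger\simeq\fM_{1,n}^\ctr$, so Construction~\ref{cons:upsilon} rewrites $\widehat{\cM}_{1,n}(\PP^r,d)=\Mbar_{1,n}(\PP^r,d)\times_{\fM_{1,n}^\dagger}\widetilde{\fM}_{1,n}^\dagger$ as $\Mbar_{1,n}(\PP^r,d)\times_{\fM_{1,n}^\dagger}\fM_{1,n}^\ctr$: it parametrizes a stable map $[f:C\to\PP^r]$ together with a stable central alignment whose formally contracted interior is the subcurve contracted by $f$. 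This is an open substack of $\widetilde{\VZ}^{\ctr}_{1,n}(\PP^r,d)=\Mbar_{1,n}(\PP^r,d)\times_{\fM_{1,n}}\fM_{1,n}^\ctr$, namely the locus where the precontractible subgraph of the alignment agrees with the one contracted by $f$, and the first step is to check that the factorization property forces one into this locus: for the contraction $\widetilde C\to\overline C$ used in its statement to be defined one needs the contraction radius $\delta_f$ to be comparable to all radii $\lambda(v)$, which, given the central alignment, pins $\delta_f$ down in terms of $\delta$, and the existence of a factoring morphism $\overline C\to\PP^r$ then forces the two contracted loci to coincide. Running the arguments of Theorem~\ref{vz-properness} and Theorem~\ref{thm: vz-smoothness} verbatim (the text already notes they ``go through exactly''), $\VZ^\ctr_{1,n}(\PP^r,d)$ is then a closed, smooth substack of $\widehat{\cM}_{1,n}(\PP^r,d)$ of pure dimension $(r+1)d+n$.

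Next I would establish $\widetilde{\cM}_{1,n}(\PP^r,d)\subseteq\VZ^\ctr_{1,n}(\PP^r,d)$. Over the locus of maps from smooth genus-one curves the tropicalization is a point, so $\delta_f=0$, the modification $\widetilde C\to C$ and the contraction $\widetilde C\to\overline C$ are identities, and the factorization property holds vacuously; hence the smooth-source locus of $\Mbar_{1,n}(\PP^r,d)$ lifts canonically into $\VZ^\ctr_{1,n}(\PP^r,d)$. Since the latter is closed in $\widehat{\cM}_{1,n}(\PP^r,d)$, it contains the closure of this locus, which is by definition $\widetilde{\cM}_{1,n}(\PP^r,d)$.

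For the reverse inclusion it suffices to show $\VZ^\ctr_{1,n}(\PP^r,d)$ is irreducible, since then it equals the closure of its dense open smooth-source locus (the closure taken inside $\VZ^\ctr_{1,n}(\PP^r,d)$ or inside $\widehat{\cM}_{1,n}(\PP^r,d)$ agreeing, as $\VZ^\ctr$ is closed), which we have just identified with $\widetilde{\cM}_{1,n}(\PP^r,d)$. Being smooth of pure dimension $(r+1)d+n$, $\VZ^\ctr_{1,n}(\PP^r,d)$ is a disjoint union of irreducible components of that dimension, one being $\widetilde{\cM}_{1,n}(\PP^r,d)$; I would rule out any other component $Z$ by exploiting the nonconstancy clause of the factorization property --- the map $\overline C\to\PP^r$ is nonconstant on a branch of the central genus-one component --- to constrain the image of $Z$ under $Z\to\Mbar_{1,n}(\PP^r,d)$, combined with a dimension estimate showing that an expected-dimensional $Z$ cannot lie over the boundary of the main component while missing the dense smooth-source locus. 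This is exactly the phenomenon underlying Vakil and Zinger's theorem that their construction resolves the main component, re-derived here in logarithmic terms; alternatively one may invoke \cite[Theorem~1.1]{VZ08} for the irreducibility of $\widetilde{\cM}_{1,n}(\PP^r,d)$ and argue that the closed immersion $\widetilde{\cM}_{1,n}(\PP^r,d)\hookrightarrow\VZ^\ctr_{1,n}(\PP^r,d)$ of smooth proper stacks of equal dimension is an isomorphism onto a connected component, reducing the claim to connectedness of $\VZ^\ctr_{1,n}(\PP^r,d)$. Once equality is known, both sides are reduced (one being smooth, the other a closure), so the identification is an isomorphism of substacks of $\widehat{\cM}_{1,n}(\PP^r,d)$, and it is over $\Mbar_{1,n}(\PP^r,d)$ by construction, so it commutes with the projection.

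The hard part is the irreducibility --- equivalently connectedness --- of $\VZ^\ctr_{1,n}(\PP^r,d)$ used in the reverse inclusion, i.e.\ showing that a central alignment together with the factorization property cuts out nothing beyond the closure of the main component. The dimension/obstruction-theoretic route sketched above seems the most robust, but it requires care because the logarithmic modification $\fM_{1,n}^\ctr\to\fM_{1,n}$ introduces exceptional directions, so the fibers of $\VZ^\ctr_{1,n}(\PP^r,d)\to\Mbar_{1,n}(\PP^r,d)$ over boundary strata are positive-dimensional and $\dim Z$ cannot simply be bounded by the dimension of its image.
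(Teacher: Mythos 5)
Your proposal follows essentially the same skeleton as the paper's proof: identify $\widetilde{\fM}_{1,n}^\dagger$ with $\fM_{1,n}^\ctr$ via Proposition~\ref{prop:blowup-alignment}, observe that $\widetilde{\cM}_{1,n}(\PP^r,d)$ is the closure of the smooth-source locus inside $\widetilde{\VZ}^\ctr_{1,n}(\PP^r,d)$, note that $\VZ^\ctr_{1,n}(\PP^r,d)$ is a closed, smooth, proper substack containing that locus, and conclude equality from connectedness. The paper compresses all of this into three sentences and asserts the words ``smooth, proper, and connected'' for $\VZ^\ctr_{1,n}(\PP^r,d)$; you arrive at the same reduction but, rightly, do not treat connectedness as free.

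So the point you flag at the end as ``the hard part'' is genuine: the text right before Theorem~\ref{thm:vz-text} says the proofs of \emph{smoothness} and \emph{properness} go through as in Section~\ref{sec:target-proj}, but never argues \emph{connectedness}, and the paper's proof of the theorem uses it. Your instinct that a naive dimension count on the image in $\Mbar_{1,n}(\PP^r,d)$ won't close this (because the exceptional fibers of $\fM_{1,n}^\ctr\to\fM_{1,n}$ are positive-dimensional) is also correct. A route that does close it, and that is consistent with the machinery already in play, is to use the smoothness of $\VZ^\ctr_{1,n}(\PP^r,d)$ over the universal Picard stack over $\fM_{1,n}^\ctr$ (the remark after Theorem~\ref{thm: vz-smoothness}): the Picard base is irreducible, the nonempty fibers over the smooth-source locus are open subsets of a projective space of sections and hence connected, and the classical connectedness of $\cM_{1,n}(\PP^r,d)$ (maps from smooth genus-$1$ curves) shows that every connected component of $\VZ^\ctr_{1,n}(\PP^r,d)$ meets — and therefore, being smooth and equidimensional, contains a dense open in — the unique main component. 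That is the content the paper is implicitly leaning on. Apart from that, your intermediate discussion of $\widehat{\cM}_{1,n}(\PP^r,d)$ as a sublocus of $\widetilde{\VZ}^\ctr_{1,n}(\PP^r,d)$ is a correct but inessential elaboration; the paper sidesteps it by working with the closure of the main component directly.
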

\begin{proof}
By definition, $\widetilde\cM_{1,n}(\PP^r,d)$ is the closure of the main component of the space of maps from the universal curve over $\widetilde\fM_{1,n}^\dagger$ to $\PP^r$.  But we saw in Proposition~\ref{prop:blowup-alignment} that $\widetilde\fM_{1,n}^\dagger$ is isomorphic to $\fM_{1,n}^\ctr$, so $\widetilde\cM_{1,n}(\PP^r, d)$ is the closure of the main component of $\widetilde\VZ\vphantom{\VZ}_{1,n}^\ctr(\PP^r,d)$.  On the other hand, $\VZ_{1,n}^\ctr(\PP^r,d)$ is a smooth, proper, and connected substack of $\widetilde\VZ\vphantom{\VZ}_{1,n}^\ctr(\PP^r,d)$ that contains the main component.  Hence it coincides with $\widetilde{\cM}_{1,n}(\PP^r,d)$.
\end{proof}

\begin{remark}
We could have chosen to work with centrally aligned logarithmic curves throughout the paper. However, there are some advantages to radially aligned curves. One obtains a \textbf{single} moduli space $\Mbar_{1,n}^{\mathrm{rad}}$ which maps to all the spaces of Smyth curves. The discussion of logarithmic targets in the sequel to this paper is also be cleaner with a radial alignment. On the other hand, the advantage of the Vakil--Zinger approach and central alignments is that fewer blowups are required, and the locus of maps where no elliptic component is contracted remains untouched by the construction.  Vakil--Zinger could have just as easily produced a blowup construction of $\VZ_{1,n}(\PP^r,d)$ by blowing up more loci than was strictly necessary for smoothness. 
\end{remark}

\numberwithin{theorem}{section}
\section{The quasimap spaces}\label{sec: quasimaps} 

A modification of the methods of the previous sections gives rise to a desingularization of the genus $1$ quasimaps spaces to $\PP^r$, constructed by Ciocan-Fontanine and Kim~\cite{CFK} and Marian, Oprea, and Pandharipande~\cite{MOP}. 

\begin{definition}
A \textbf{genus $g$ quasimap to $\PP^r$ over $S$} consists of the data
\[
((\mathscr C,p_1,\ldots, p_n),\mathscr L,s_0,\ldots, s_r),
\]
where $(\mathscr C,p_1,\ldots, p_n)\to S$ is a flat family of $n$-pointed nodal curves of genus $g$, $\mathscr L$ is a line bundle on $\mathscr C$ with sections $s_0,\ldots, s_r$, such that on every geometric fiber $C$ of $\mathscr C$, the following non-degeneracy condition holds: \textit{there is a finite (possibly empty) set of non-singular unmarked points $B$ of $C$, such that, outside $B$ the sections $s_0,\ldots,s_r$ are basepoint free.}
\end{definition}

Such a quasimap determines a homomorphism
\[
\mathrm{Pic}(\PP^r)\to \mathrm{Pic}(C),
\]
and via Poincar\'e duality, a homology class in $H_2(\PP^r,\ZZ)$. We refer to this as the \textbf{degree} of the quasimap. An isomorphism of quasimaps is defined in the natural fashion, as an isomorphism of two families of curves $\mathscr C_1\to \mathscr C_2$, with compatible isomorphisms of the pullbacks of the line bundle and sections of the latter with those of the former.

\begin{definition}
A quasimap $((\mathscr C,p_1,\ldots, p_n),\mathscr L,s_0,\ldots, s_r)$ is said to be \textbf{stable} if
\[
\omega_{\mathscr C/S}(p_1+\cdots+ p_n)\otimes \mathscr L
\]
is ample. 
\end{definition}

As asserted in~\cite{CFK}, this is equivalent to a combinatorial condition on each geometric fiber: (1) no rational component of the the underlying curve $C$ of the quasimap can have fewer than $2$ special points (nodes and markings), and (2) on every rational component with $2$ special points, or elliptic component with $1$ special point, the line bundle $\mathscr L$ must have positive degree. 

\begin{theorem}[{\cite{CFK,MOP}}]
There is a Deligne-Mumford stack $\mathcal Q_{g,n}(\PP^r,d)$ parametrizing stable quasimaps of genus $g$ with $n$-marked points to $\PP^r$ of degree $d$. Moreover, the natural map to the universal Picard variety
\[
\mathcal Q_{g,n}(\PP^r,d)\to \mathfrak{Pic}_{g,n}
\]
defines a relative perfect obstruction theory on $\mathcal Q_{g,n}(\PP^r,d)$ and thus a virtual fundamental class.
\end{theorem}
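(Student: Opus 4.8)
The plan is to realize $\mathcal Q_{g,n}(\PP^r,d)$ as an open substack of a linear stack over the universal Picard stack, and then to read off the obstruction theory and virtual class from the smoothness of that base. Write $\mathfrak{Pic}_{g,n}$ for the stack whose $S$-points are a prestable $n$-pointed genus~$g$ curve $(\mathscr C, p_1, \dots, p_n) \to S$ together with a line bundle $\mathscr L$ of relative degree~$d$; let $\pi : \mathscr C \to \mathfrak{Pic}_{g,n}$ denote the universal curve and, abusively, $\mathscr L$ the universal line bundle. The stack $\fM_{g,n}$ of prestable curves is smooth, and since $H^2(C, \mathscr O_C) = 0$ for a curve~$C$ the relative Picard stack is smooth over $\fM_{g,n}$; hence $\mathfrak{Pic}_{g,n}$ is a smooth algebraic stack. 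A quasimap is exactly a point of $\mathfrak{Pic}_{g,n}$ equipped with $r+1$ sections of $\mathscr L$ satisfying the non-degeneracy and stability conditions.

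First I would establish algebraicity. By the standard theory of relative moduli of sections (cohomology and base change produces, locally on $\mathfrak{Pic}_{g,n}$, a two-term complex of vector bundles $[K^0 \to K^1]$ universally computing $R\pi_\star \mathscr L^{\oplus(r+1)}$), the functor sending a $\mathfrak{Pic}_{g,n}$-scheme $T$ to $\Gamma(\mathscr C_T, \mathscr L_T^{\oplus(r+1)})$ is represented by an algebraic stack $\mathcal A \to \mathfrak{Pic}_{g,n}$, locally of finite presentation --- namely the abelian cone $\ker(K^0 \to K^1)$ inside the total space of $K^0$. The non-degeneracy condition (on each geometric fibre, the common zero locus of the $r+1$ sections is finite and avoids the nodes and markings) and the stability condition (ampleness of $\omega_{\mathscr C/S}(p_1 + \cdots + p_n) \otimes \mathscr L$) are both open, so their intersection $\mathcal Q_{g,n}(\PP^r,d) \subset \mathcal A$ is an open, hence algebraic, substack. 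Stability forces the automorphism group of a quasimap to be finite and reduced --- an automorphism projecting to the identity of $\mathfrak{Pic}_{g,n}$ fixes $\mathscr C$ and every section, hence is trivial --- so $\mathcal Q_{g,n}(\PP^r,d)$ is Deligne--Mumford and the map to $\mathfrak{Pic}_{g,n}$ is representable. Finally, boundedness (quasi-compactness of each degree-$d$ component) follows because stability bounds the degree of $\mathscr L$ on every component of every fibre and bounds the number of components carrying few special points, leaving only finitely many topological types.

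Next I would produce the relative obstruction theory for $\nu : \mathcal Q_{g,n}(\PP^r,d) \to \mathfrak{Pic}_{g,n}$. This is the usual obstruction theory of a moduli of sections: for a square-zero extension $S \hookrightarrow S'$ of $\mathfrak{Pic}_{g,n}$-schemes with ideal~$J$ and a section $s$ of $\mathscr L_S^{\oplus(r+1)}$, the obstruction to extending $s$ over $S'$ lies in $H^1(\mathscr C_S, \mathscr L_S^{\oplus(r+1)} \otimes \pi^\star J)$, and when it vanishes the set of extensions is a torsor under $H^0$ of the same sheaf. By the criterion of Behrend--Fantechi this is precisely the statement that
\[
E^\bullet \;=\; \bigl( R\pi_\star \mathscr L^{\oplus(r+1)} \bigr)^\vee \;\longrightarrow\; \mathbf L_{\mathcal Q_{g,n}(\PP^r,d)/\mathfrak{Pic}_{g,n}}
\]
is a relative obstruction theory; it is perfect of amplitude $[-1,0]$ because $\pi$ is a proper flat family of curves and $\mathscr L^{\oplus(r+1)}$ is locally free. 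As $\mathfrak{Pic}_{g,n}$ is smooth, the morphism $\nu$ is of Deligne--Mumford type in the sense of~\cite[Section~2]{Mano12}, and applying the virtual pullback $\nu^{!}$ to the fundamental class $[\mathfrak{Pic}_{g,n}]$ --- equivalently, composing $E^\bullet$ with the cotangent complex of the smooth base to obtain an absolute perfect obstruction theory --- produces the virtual fundamental class $[\mathcal Q_{g,n}(\PP^r,d)]^{\mathrm{vir}}$, whose dimension $\operatorname{rk} E^\bullet + \dim \mathfrak{Pic}_{g,n} = (r+1)(d+1-g) + (4g-4+n)$ a Riemann--Roch computation identifies with the Gromov--Witten expected dimension $(r+1)d + (r-3)(1-g) + n$.

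The steps requiring genuine work, rather than formal bookkeeping, are the algebraicity and, above all, the boundedness of $\mathcal Q_{g,n}(\PP^r,d)$ --- one must check carefully that stability simultaneously controls the combinatorial types of the curves and the degrees of $\mathscr L$ on their components --- together with the openness of the non-degeneracy condition. By contrast, once $\mathfrak{Pic}_{g,n}$ and the universal line bundle are in hand, the construction of the relative perfect obstruction theory and the virtual class is essentially formal, being the familiar obstruction theory for sections of a flat sheaf. The complete arguments are carried out in~\cite{CFK,MOP}.
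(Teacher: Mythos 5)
This theorem is not proved in the paper at all---it is quoted from \cite{CFK} and \cite{MOP}---and your sketch follows the same route as those references: realize quasimaps as an open (non-degenerate, stable) substack of the moduli of sections of $\mathscr L^{\oplus(r+1)}$ over the smooth universal Picard stack, and equip the projection to $\mathfrak{Pic}_{g,n}$ with the standard section obstruction theory $\bigl(R\pi_\star \mathscr L^{\oplus(r+1)}\bigr)^\vee$, which is perfect in $[-1,0]$; your dimension bookkeeping is also correct. One caution: your parenthetical argument for the Deligne--Mumford property only establishes that $\mathcal Q_{g,n}(\PP^r,d)\to\mathfrak{Pic}_{g,n}$ is representable, whereas $\mathfrak{Pic}_{g,n}$ itself has positive-dimensional stabilizers ($\Gm$ scaling the line bundle, and automorphisms of unstable components), so finiteness of the absolute automorphism group of a stable quasimap still requires the argument that non-degeneracy kills the $\Gm$-scaling and that positivity of $\mathscr L$ on unstable components leaves only finitely many curve automorphisms preserving the sections---this is precisely what the stability condition buys in \cite{CFK}, to which you correctly defer.
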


Here, $\mathfrak{Pic}_{g,n}$ denotes the moduli stack paramterizing pairs of nodal $n$-marked genus $g$ curves together with a line bundle.

When $g = 1$ and $n = 0$, these spaces exhibit a remarkable smoothness property~\cite[Section 3.3]{MOP}:

\begin{theorem}
The moduli stack $\mathcal Q_{1,0}(\PP^r,d)$ is smooth.
\end{theorem}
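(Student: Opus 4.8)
The plan is to prove that the forgetful morphism
\[
q \colon \mathcal Q_{1,0}(\PP^r,d)\longrightarrow \mathfrak{Pic}_{1,0}
\]
is smooth; since $\mathfrak{Pic}_{1,0}$ is smooth (it is smooth over the smooth stack $\mathfrak M_{1,0}$ of prestable genus~$1$ curves, because line bundles on a curve have unobstructed deformations), this forces $\mathcal Q_{1,0}(\PP^r,d)$ to be smooth. Relative to $\mathfrak{Pic}_{1,0}$ both the curve $\mathscr C$ and the line bundle $\mathscr L$ are fixed, so a deformation of a quasimap $(C,\mathscr L,s_0,\dots,s_r)$ over a square-zero extension is nothing but a simultaneous deformation of the $r+1$ sections $s_i\in H^0(C,\mathscr L)$; hence the relative deformation space at such a point is $H^0(C,\mathscr L)^{\oplus(r+1)}$ and the relative obstruction space is $H^1(C,\mathscr L)^{\oplus(r+1)}$. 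The whole theorem therefore reduces to showing $H^1(C,\mathscr L)=0$ for every stable genus~$1$ quasimap with no marked points.

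To prove this vanishing I would first pin down the underlying curve $C$. Since $s_0,\dots,s_r$ generate $\mathscr L$ away from a finite set, for every component $C_i$ the restriction $\mathscr L|_{C_i}$ has a section nonvanishing at a general point, so $\deg(\mathscr L|_{C_i})\geq 0$. Because $n=0$, stability forbids rational tails: a leaf of any tree of $\PP^1$'s attached to the genus~$1$ circuit would be a rational component with only one special point. As $C$ has arithmetic genus exactly~$1$, its dual graph is the circuit together with a forest of rational components attached to it, and the absence of leaves forces this forest to be empty. Therefore $C$ equals its own circuit: it is a smooth elliptic curve, an irreducible rational curve with one node, or a cycle of $k\geq 2$ copies of $\PP^1$. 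In the first two cases the stability requirement that $\omega_C\otimes\mathscr L$ be ample forces $\deg\mathscr L>0$, and in the third case the combinatorial stability condition forces $\deg(\mathscr L|_{C_i})>0$ on each component of the cycle. In every case $\mathscr L$ has strictly positive degree on every component of the circuit of $C$.

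Now $C$ is a Gorenstein curve of genus~$1$ with no genus~$0$ tails, so Lemma~\ref{lem:h1'} applies directly and gives $H^1(C,\mathscr L)=0$; equivalently, $\omega_C$ is trivial by Proposition~\ref{prop:dualizing-trivial}, so by Serre duality $H^1(C,\mathscr L)\cong H^0(C,\mathscr L^\vee)^\vee$, and $\mathscr L^\vee$ has negative degree on every component of the irreducible or cyclic curve $C$ and hence has no nonzero sections. This finishes the argument: all relative obstruction spaces of $q$ vanish, so $q$ is smooth, $\mathcal Q_{1,0}(\PP^r,d)$ is smooth, and its virtual class coincides with its fundamental class.

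The main obstacle is the classification step in the second paragraph — showing that a stable genus~$1$ quasimap with $n=0$ necessarily has $C$ equal to its own circuit, which is exactly what propagates positivity of $\deg\mathscr L$ to all components and lets Serre duality bite. This is precisely the point that breaks when marked points are allowed: markings permit genus~$0$ bridge components on which $\mathscr L$ has degree zero, so $H^1$ need not vanish and the pointed quasimap spaces become singular, as noted in the introduction.
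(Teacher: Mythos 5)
Your argument is correct, but note that the paper does not actually prove this statement: it is quoted from the stable quotients literature, with the proof deferred to \cite[Section 3.3]{MOP}. What you have written is essentially the standard argument from that source, and it is also precisely the strategy the paper itself deploys later for the pointed desingularization: compare Lemma~\ref{lem:vq-smooth}, which proves smoothness of $\VQ_{1,n}(\PP^r,d)$ over the universal Picard stack by exactly your mechanism (relative obstructions for the sections lie in $H^1(C,\mathscr L)^{\oplus(r+1)}$, which vanishes by Lemma~\ref{lem:h1'} once $\mathscr L$ has positive degree on a branch of the circuit, and the Picard stack is smooth since $H^2(C,\mathscr O_C)=0$). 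The only content specific to $n=0$ is your classification step, and it is right: with no markings, stability excludes rational components with fewer than two special points, so the dual graph can have no leaves outside the circuit and hence no trees at all, forcing $C$ to equal its circuit and $\mathscr L$ to have positive degree on every component; this is what fails in the pointed case and is why the paper must blow up to recover smoothness. Two small points of hygiene: you should invoke the combinatorial form of stability (equivalent to Ciocan-Fontanine--Kim's $0^+$-stability, i.e.\ ampleness of $\omega_C(\Sigma)\otimes\mathscr L^{\epsilon}$ for small $\epsilon$), since the literal condition ``$\omega_C(\Sigma)\otimes\mathscr L$ ample'' as printed in the paper's definition would not by itself exclude rational tails carrying enough degree --- you do in fact use the combinatorial condition, so this is a matter of wording, not a gap; and the relative tangent space is $H^0(C,\mathscr L)^{\oplus(r+1)}$ only up to the infinitesimal $\Gm$-rescaling of $(s_0,\dots,s_r)$, which is irrelevant to the vanishing of obstructions and hence to smoothness.
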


It should be noted that this property fails as soon as there are marked points. The smoothness is due to the strength of the stability condition in the quasimaps theory.  Without marked points, rational tails are disallowed, and thus, no genus $1$ curve can be contracted. Our construction in the stable maps case can be adapted to desingularize the moduli spaces $\mathcal Q_{1,n}(\PP^r,d)$ for $n>0$. 

As in the stable maps case, given a line bundle on a family of radially aligned curves $\mathscr L$ on $C\to S$, at each geometric point $s\in S$, there is a well-defined contracting radius $\delta_s$, measuring the distance from the circuit to the first component on which $\mathscr L$ has nonzero degree. This defines a destabilization $\widetilde C\to C$ and a contraction $\widetilde C\to \overline C$. 

\begin{definition}
Define the stack $\widetilde{\VQ}_{1,n}(\PP^r,d)$ as the stack parametrizing a minimal radially aligned logarithmic curve $C\to S$ of genus $1$ and a quasimap on $C$. \\

\noindent
Define the stack ${\VQ}_{1,n}(\PP^r,d)$ as the substack of $\widetilde{\VQ}_{1,n}(\PP^r,d)$ parametrizing stable quasimaps 
\[
((\mathscr C,p_1,\ldots, p_n),\mathscr L,s_0,\ldots, s_r)
\]
with the following \textbf{factorization property}: In the notation of the previous section, let $\tau: \widetilde C\to C$ and $\gamma: \widetilde C\to \overline C$ be the partial destabilization and Gorenstein contraction of $C$. Then, there is a line bundle $\overline {\mathscr L}$ on $\overline C$ with sections $\{\overline s_i\}_{i=0}^r$such that 
\[
\tau^\star {\mathscr L} = \gamma^\star \overline {\mathscr L},
\]
the sections $\tau^\star s_i$ coincide with $\gamma^\star \overline s_i$. \end{definition}

\begin{theorem}
The stack $\VQ_{1,n}(\PP^r,d)$ is a smooth and proper Deligne--Mumford stack.
\end{theorem}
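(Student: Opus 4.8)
The plan is to run, essentially verbatim, the arguments of Section~\ref{sec:target-proj} and Theorem~\ref{vz-properness}, with the data of a line bundle $\mathscr L$ together with its $r+1$ sections playing the role of the map to $\PP^r$. First I would record the basic shape of the moduli problem: $\widetilde{\VQ}_{1,n}(\PP^r,d)$ is the fibre product of $\fM_{1,n}^{\mathrm{rad}}$ with the quasimap stack $\mathcal Q_{1,n}(\PP^r,d)$ over $\fM_{1,n}$, so it is an algebraic stack, and the factorization property carves out $\VQ_{1,n}(\PP^r,d)$ inside it as a monomorphic substack. Deligne--Mumfordness then follows exactly as for $\VZ_{1,n}(Y,\beta)$: the quasimap stability condition $\omega_{\mathscr C/S}(\Sigma)\otimes\mathscr L$ ample bounds automorphism groups, and the radial alignment contributes no extra infinitesimal automorphisms. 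Since $\fM_{1,n}^{\mathrm{rad}}\to\fM_{1,n}$ is a logarithmic modification, $\widetilde{\VQ}_{1,n}(\PP^r,d)$ is proper over the proper stack $\mathcal Q_{1,n}(\PP^r,d)$, hence is itself proper.

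Next I would establish properness of $\VQ_{1,n}(\PP^r,d)$ by showing the monomorphism $\VQ_{1,n}(\PP^r,d)\to\widetilde{\VQ}_{1,n}(\PP^r,d)$ is a closed immersion, checking quasicompactness and the valuative criterion as in Theorem~\ref{vz-properness}. For quasicompactness, one stratifies so that the combinatorial types of $C$, $\widetilde C$, $\overline C$, and of the contraction $\tau:\widetilde C\to\overline C$ are constant; on each stratum the factorization locus is constructible, being the intersection of the open locus where $\tau_\star\mathscr O_{\widetilde C}/\mathscr O_{\overline C}$ vanishes with closed loci cut out by the vanishing of the coboundary maps that obstruct descent of $\tau^\star\mathscr L$ and of the sections $\tau^\star s_i$ to $\overline C$. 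For the valuative criterion, one takes a factorizing family over the fraction field of a valuation ring, extends the underlying quasimap on $C$ using properness of $\widetilde{\VQ}_{1,n}(\PP^r,d)$ over $\mathcal Q_{1,n}(\PP^r,d)$, and then repeats the Euler characteristic computation of Theorem~\ref{vz-properness}: the comparison map $\mathscr O_{\overline C}\to\mathscr A$, with $\mathscr A=j_\star\mathscr O_{\overline C_\eta}\times_{j_\star\tau_\star\mathscr O_{\widetilde C_\eta}}\tau_\star\mathscr O_{\widetilde C}$, has flat target, the quotient $\mathscr A/\mathscr O_{\overline C}$ is finite over the base and concentrated at the genus~$1$ singularity of the special fibre and generically trivial, so it has length zero and the map is an isomorphism; this forces both the line bundle and its sections to descend to $\overline C$ in the limit.

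For smoothness I would show that the forgetful morphism $\VQ_{1,n}(\PP^r,d)\to\fM_{1,n}^{\mathrm{rad}}$ is smooth, indeed that the induced morphism to the universal Picard stack over $\fM_{1,n}^{\mathrm{rad}}$ is unobstructed; since the Picard stack is smooth over $\fM_{1,n}^{\mathrm{rad}}$ and $\fM_{1,n}^{\mathrm{rad}}$ is smooth by Corollary~\ref{cor:smooth}, this gives the claim. Given a square-zero extension $S\hookrightarrow S'$ and an extension of the radially aligned curve over $S'$, the obstruction to deforming the line bundle $\overline{\mathscr L}$ on $\overline C$ lies in $H^2(\overline C,\mathscr O_{\overline C})=0$ by one-dimensionality of the fibres, and, having chosen an extension $\overline{\mathscr L}'$, the obstruction to extending the sections lies in $H^1(\overline C,\overline{\mathscr L})$. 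Here $\overline C$ is a Gorenstein curve of genus~$1$ by Proposition~\ref{contraction}; the non-degeneracy condition in the definition of a quasimap forces $\deg\overline{\mathscr L}\geq 0$ on every component of $\overline C$; and the factorization property guarantees $\deg\overline{\mathscr L}>0$ on at least one branch of the genus~$1$ component of $\overline C$, that is, on at least one component of its circuit. Lemma~\ref{lem:h1'} then yields $H^1(\overline C,\overline{\mathscr L})=0$ locally on $S$, so the sections extend and the morphism is smooth.

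The step I expect to be the main obstacle is the valuative criterion in the properness argument: showing that a flat limit of quasimaps with the factorization property again has it, i.e.\ that the line bundle and sections genuinely descend through the contraction in the limit. As in the stable map case this reduces to the Euler characteristic computation proving $\mathscr A/\mathscr O_{\overline C}$ has length zero; the extra care specific to quasimaps is bookkeeping of degrees and of the non-degeneracy locus under $\tau$ and $\gamma$, to confirm that $\overline{\mathscr L}$ still meets the hypotheses of Lemma~\ref{lem:h1'} and that the contracting radius $\delta_f$ varies upper/lower-semicontinuously in families as needed for the gluing of $\delta$ over $S$.
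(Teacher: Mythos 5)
Your overall architecture matches the paper's: algebraicity via the fibre product and Hom-stack arguments (the paper uses maps to $[\mathbf A^{r+1}/\Gm]$ and \cite[Theorem~1.2]{Hall-Rydh}), smoothness by mapping to the universal Picard stack and killing the obstruction to deforming the sections with Lemma~\ref{lem:h1'}, and properness by verifying the valuative criterion for a monomorphism, exactly in the spirit of Theorem~\ref{vz-properness}. The smoothness paragraph is essentially the paper's Lemma~\ref{lem:vq-smooth} and is fine.

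The genuine gap is in the valuative criterion. You assert that once the Euler characteristic computation shows $\mathscr O_{\overline C}\to\mathscr A$ is an isomorphism, ``this forces both the line bundle and its sections to descend to $\overline C$ in the limit.'' It does not: the isomorphism of structure sheaves is what lets a \emph{morphism to a scheme} factor through the contraction, but a quasimap is a line bundle with sections, i.e.\ a map to the quotient stack $[\mathbf A^{r+1}/\Gm]$, and line bundles do not descend along a contraction $\tau:\widetilde C\to\overline C$ merely because $\tau_\star\mathscr O_{\widetilde C}=\mathscr O_{\overline C}$. If $L$ restricted to the contracted genus~$1$ locus $E$ were a nontrivial degree-zero bundle, $\tau_\star L$ would fail to be invertible at the image point and no descent would exist. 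The missing step, which is the one genuinely new ingredient in the paper's Lemma~\ref{lem:vq-proper}, is that $L$ is \emph{trivialized near $E$ by one of the sections}: by definition of the contraction radius, $L$ has degree zero on every component of the fibres of $E$, and by the non-degeneracy condition at least one $x_i$ does not vanish identically on $E$; a nonzero section of a degree-zero bundle on each irreducible component is nowhere vanishing, and connectedness of $E$ then forces a single $x_i$ to be nowhere vanishing on all of $E$. Only with this trivialization in hand does the fibre product
\begin{equation*}
\overline L = j_\star \overline L_\eta \mathop\times_{j_\star \tau_\star \upsilon^\star L_\eta} \tau_\star \upsilon^\star L
\end{equation*}
define an invertible sheaf on $\overline C$ with $\tau^\star\overline L\simeq\upsilon^\star L$, after which the sections descend automatically. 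Your closing remark about ``bookkeeping of degrees and of the non-degeneracy locus'' gestures at the right hypotheses but deploys them only to re-verify Lemma~\ref{lem:h1'}, not to carry out the descent of $\overline{\mathscr L}$, which is where the actual work lies.
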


We separate the proof into three lemmas.  The algebraicity is proved in Lemma~\ref{lem:vq-alg}, the smoothness in Lemma~\ref{lem:vq-smooth}, and the properness in Lemma~\ref{lem:vq-proper}.

\begin{lemma} \label{lem:vq-alg}
$\VQ_{1,n}(\PP^r,d)$ is a Deligne--Mumford stack.
\end{lemma}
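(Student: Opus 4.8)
The plan is to reprove Lemma~\ref{lem: algebraicty} with maps to $\PP^r$ replaced by line bundles with $r+1$ sections, that is, by morphisms to the quotient stack $[\A^{r+1}/\Gm]$.  Since $\PP^r = [(\A^{r+1}\setminus\{0\})/\Gm]$, a quasimap to $\PP^r$ on a curve $\mathscr C/S$ is exactly a morphism $\mathscr C \to [\A^{r+1}/\Gm]$ whose preimage of the complementary closed point meets each geometric fibre in a finite set of nonsingular unmarked points.  First I would work relative to $\fM_{1,n}^{\mathrm{rad}}$, which is an algebraic stack by Corollary~\ref{cor:algebraic}, and fix an $S$-point of it.  As explained just before the definition of $\widetilde{\VQ}_{1,n}(\PP^r,d)$, a line bundle on the universal curve determines a contracting radius $\delta$ --- the distance from the circuit to the first component carrying nonzero degree --- which, by the same specialization argument on dual graphs used for stable maps, glues to a global section of $\overline M_S$.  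Proposition~\ref{contraction}, applied with this line bundle playing the role of the map $f$, then produces the canonical logarithmic modification $\tau : \widetilde C \to C$ and contraction $\gamma : \widetilde C \to \overline C$ with $\overline C$ flat, proper and of finite presentation over $S$.

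Next I would identify $S \mathop\times_{\fM_{1,n}^{\mathrm{rad}}} \VQ_{1,n}(\PP^r,d)$ with a locally closed substack of the fiber product
\[
\Hom_S\bigl(C,[\A^{r+1}/\Gm]\bigr) \mathop\times_{\Hom_S(\widetilde C,\,[\A^{r+1}/\Gm])} \Hom_S\bigl(\overline C,[\A^{r+1}/\Gm]\bigr),
\]
where the two maps are pullback along $\tau$ and $\gamma$.  A point of this fiber product is precisely a pair $(\mathscr L, s_0,\ldots,s_r)$ on $C$ and $(\overline{\mathscr L},\overline s_0,\ldots,\overline s_r)$ on $\overline C$ with an identification $\tau^\star(\mathscr L,s_\bullet)\simeq\gamma^\star(\overline{\mathscr L},\overline s_\bullet)$, and since descent along $\gamma$ is unique when it exists, this is the same as a quasimap on $C$ satisfying the factorization property.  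As $C$, $\widetilde C$ and $\overline C$ are all flat, proper and of finite presentation over $S$, and $[\A^{r+1}/\Gm]$ is an algebraic stack with affine diagonal, locally of finite presentation, each $\Hom$-stack is algebraic and locally of finite presentation over $S$ by \cite[Theorem~1.2]{Hall-Rydh}, hence so is the fiber product.  It then remains to impose that (a) the data on $C$ is a non-degenerate quasimap (an open condition, as in \cite{CFK,MOP}), (b) the degree equals $d$ (open and closed), and (c) the stability condition holds, ampleness of a line bundle in a flat proper family being open.  This exhibits $\VQ_{1,n}(\PP^r,d)$ as an algebraic stack, locally of finite presentation over $\fM_{1,n}^{\mathrm{rad}}$.

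To upgrade this to ``Deligne--Mumford'' I would check that the diagonal is unramified, equivalently that every object has a finite and reduced automorphism group.  This is where the stability condition does its work: an automorphism of a stable quasimap preserves the ample bundle $\omega_{\mathscr C/S}(\Sigma)\otimes\mathscr L$ along with the marked points and a collection of sections basepoint-free away from a finite set, and such automorphisms form a finite, reduced group scheme.  This is the same rigidification used in the classical quasimap theories, and I would cite \cite{CFK,MOP} rather than repeat the argument.

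The step I expect to be the genuine obstacle --- as opposed to bookkeeping --- is the input on which everything rests: that $\overline C$, whose construction passes through a relative $\Proj$ and which acquires an elliptic $m$-fold point with $m$ varying over $S$, really is flat, proper and of finite presentation over $S$, so that $\Hom_S(\overline C,-)$ is well-behaved.  That is precisely Proposition~\ref{contraction} (with the quasimap's line bundle in place of $f$); granting it, together with the gluing of the contracting radii into a section of $\overline M_S$, the remainder is routine.
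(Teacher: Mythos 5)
Your proposal is correct and follows essentially the same route as the paper: the paper's proof is a one-line reduction to Lemma~\ref{lem: algebraicty}, i.e.\ exactly your fiber product of $\Hom$-stacks to $[\A^{r+1}/\Gm]$ over the curves $C$, $\widetilde C$, $\overline C$ supplied by Proposition~\ref{contraction}, algebraic by \cite[Theorem~1.2]{Hall-Rydh}, with stability (and non-degeneracy) imposed as an open condition. The only difference is that you spell out details the paper leaves implicit --- the uniqueness of descent along $\gamma$, and the Deligne--Mumford property via finiteness of automorphisms of stable quasimaps as in \cite{CFK,MOP} --- which is consistent with, not divergent from, the paper's argument.
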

\begin{proof}
Algebraicity follows from the same arguments as Lemma~\ref{lem: algebraicty}, replacing the Hom-stack of maps to $\PP^r$ with maps to $[\mathbf A^{r+1} / \Gm]$, which is algebraic by~\cite[Theorem~1.2]{Hall-Rydh}, noting that stability is an open condition.
\end{proof}

\begin{lemma} \label{lem:vq-smooth}
$\VQ_{1,n}(\PP^r,d)$ is smooth over the universal Picard stack over $\mathfrak M_{1,n}$.
\end{lemma}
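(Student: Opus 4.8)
The plan is to show that the forgetful morphism $\pi : \VQ_{1,n}(\PP^r,d) \to \mathfrak{Pic}$ to the relative Picard stack $\mathfrak{Pic}$ of the universal curve over $\fM_{1,n}^{\mathrm{rad}}$ (the pullback of the universal Picard stack over $\mathfrak M_{1,n}$) is smooth. Since $\mathfrak{Pic}$ is smooth over $\fM_{1,n}^{\mathrm{rad}}$ — its fibres being Picard stacks of curves, for which the relevant obstructions in $H^2$ of the structure sheaf vanish — and $\fM_{1,n}^{\mathrm{rad}}$ is smooth by Corollary~\ref{cor:smooth}, this implies the lemma. By Lemma~\ref{lem:vq-alg} and the algebraicity argument behind it, $\pi$ is locally of finite presentation, so it suffices to check the infinitesimal lifting criterion. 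I would fix a square-zero extension $S \hookrightarrow S'$ with ideal $J$, a radially aligned logarithmic curve $C'$ over $S'$ equipped with a line bundle $\mathscr L'$ restricting to a given $(C,\mathscr L)$ over $S$, and a stable quasimap $(C,\mathscr L,s_0,\dots,s_r)$ over $S$ satisfying the factorization property, and attempt to extend the sections $s_i$ to sections $s_i'$ of $\mathscr L'$ over $S'$ still defining a stable quasimap with the factorization property; crucially, neither $C$ nor $\mathscr L$ needs to be deformed, which is the point of working over $\mathfrak{Pic}$.

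The first step is to move the problem onto the contracted curve. Let $\delta \in \overline M_{S'}$ be the contracting radius; it agrees with the one computed over $S$ because the degree of $\mathscr L'$ on each component is locally constant and $\underline S \hookrightarrow \underline{S'}$ is a homeomorphism. By the definition of $\delta$, the line bundle $\mathscr L'$ has degree zero on every component of $C'$ strictly inside the circle of radius $\delta$, so Propositions~\ref{prop:C_delta}, \ref{locallyfree} and~\ref{contraction} furnish the partial destabilization $\tau' : \widetilde C' \to C'$ and the contraction $\gamma' : \widetilde C' \to \overline C'$, with $\overline C' \to S'$ flat and with Gorenstein genus~$1$ fibres, restricting over $S$ to $\tau : \widetilde C \to C$ and $\gamma : \widetilde C \to \overline C$. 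Since $(\tau')^{\star}\mathscr L'$ has degree zero on every $\gamma'$-contracted component, it descends along $\gamma'$ to a line bundle $\overline{\mathscr L}'$ on $\overline C'$ restricting to $\overline{\mathscr L}$. Using $\R (\tau')_{\star}\mathscr O_{\widetilde C'} = \mathscr O_{C'}$ and $(\gamma')_{\star}\mathscr O_{\widetilde C'} = \mathscr O_{\overline C'}$ (Proposition~\ref{contraction}), the pullback maps give compatible identifications $H^0(C',\mathscr L') \cong H^0(\widetilde C',(\tau')^{\star}\mathscr L') = H^0(\widetilde C',(\gamma')^{\star}\overline{\mathscr L}') \cong H^0(\overline C',\overline{\mathscr L}')$, and likewise over $S$. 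Hence extending the $s_i$ to factorization-compatible sections of $\mathscr L'$ is equivalent to extending the sections $\overline s_i$ of $\overline{\mathscr L}$ to sections of $\overline{\mathscr L}'$ on $\overline C'$, and the non-degeneracy and stability conditions, being open and satisfied over $S$, are automatically satisfied over $S'$.

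What remains is the vanishing $H^1(\overline C,\overline{\mathscr L}) = 0$, which governs the obstruction, living in $H^1(\overline C,\overline{\mathscr L}\otimes\overline\pi^{\star}J)$, to lifting sections along the square-zero extension $\overline C \hookrightarrow \overline C'$. Working over an affine $S$, flatness of $\overline\pi : \overline C \to S$ (Proposition~\ref{contraction}) together with the projection formula reduces this to $\R^1\overline\pi_{\star}\overline{\mathscr L} = 0$, hence by cohomology and base change to $H^1(\overline C_s,\overline{\mathscr L}_s) = 0$ on geometric fibres. This is precisely Lemma~\ref{lem:h1'}: the fibre $\overline C_s$ is a Gorenstein curve of genus~$1$ by Proposition~\ref{contraction}\,(3); the line bundle $\overline{\mathscr L}_s$ has degree $\geq 0$ on every component, because the non-degeneracy condition prevents $\mathscr L$ from having negative degree on any component of $C$ (such a component would lie entirely in the base locus), and because $\overline{\mathscr L}_s$ has the same degree as $\mathscr L_s$ on the components inherited from $C_s$ and degree zero on the rational bridges inserted by $\tau'$; and $\overline{\mathscr L}_s$ has positive degree on the component at distance exactly $\delta$ from the circuit, which meets the genus~$1$ singularity and carries nonzero $\mathscr L$-degree by the very definition of the contracting radius. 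With $H^1 = 0$ the sections lift, so $\pi$ is formally smooth.

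I expect the main obstacle to be this first reduction: showing precisely that the factorization-compatible deformation problem for $\mathscr L'$ on $C'$ transports isomorphically to the unconstrained problem of lifting sections of $\overline{\mathscr L}'$ on the flat deformation $\overline C'$ supplied by Proposition~\ref{contraction}, and verifying that $\overline{\mathscr L}$ satisfies the numerical hypotheses of Lemma~\ref{lem:h1'}. Once that is set up, the cohomology vanishing, and therefore the formal smoothness, follows immediately.
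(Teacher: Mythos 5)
Your overall architecture is the same as the paper's: map to the relative Picard stack of the universal radially aligned curve, transport the problem of deforming the sections to the contracted curve, and kill the obstruction with Lemma~\ref{lem:h1'} (your verification of its hypotheses -- degree $\geq 0$ on every component via non-degeneracy, and positive degree on a branch at radius exactly $\delta$ -- is exactly right). The genuine gap sits precisely at the step you flagged as the main obstacle: the claim that $(\tau')^{\star}\mathscr L'$ descends along $\gamma'$ \emph{because it has degree zero on every contracted component}. Degree zero does not give descent when the contracted subcurve $E$ has arithmetic genus~$1$. On the closed fiber, triviality of $\mathscr L|_{E}$ comes from the sections (one of the $s_i$ is nowhere zero on $E$, as in the proof of Lemma~\ref{lem:vq-proper}), not from degrees; and over the square-zero extension the Picard stack of the \emph{uncontracted} universal curve hands you an arbitrary deformation $\mathscr L'$, whose restriction to $E'$ can be a nontrivial deformation of $\mathscr O_E$ because $H^1(E,\mathscr O_E)\neq 0$. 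For such a deformation no $\overline{\mathscr L}'$ with $(\gamma')^{\star}\overline{\mathscr L}' \cong (\tau')^{\star}\mathscr L'$ can exist (anything pulled back from $\overline C'$ restricts to the trivial deformation on $E'$), and the section trivializing $\mathscr L|_E$ cannot lift either; so the lifting problem you set up does not always admit a solution, and the reduction collapses there. Relatedly, your appeal to $(\gamma')_{\star}\mathscr O_{\widetilde C'} = \mathscr O_{\overline C'}$ is not covered by Proposition~\ref{contraction}: that identity is established only after reduction to a discrete valuation ring, and fiberwise it is false -- the fiberwise pushforward is the seminormalization at the genus-$1$ point, larger by length one (see the proof of Theorem~\ref{vz-properness}).

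To be fair, the paper's own proof passes over this same point very quickly (``fixing a deformation of the curve and line bundle $(C,L)$\dots''), and what the smoothness theorem actually requires is unobstructedness of $\VQ_{1,n}(\PP^r,d)$ relative to $\fM_{1,n}^{\mathrm{rad}}$, equivalently relative to the Picard stack of the \emph{contracted} universal curve. Your second and third steps essentially prove that statement: fix the deformation $C'$ of the aligned curve, deform $(\overline{\mathscr L}, \overline s_i)$ directly on $\overline C'$ -- line bundle deformations are unobstructed since $H^2(\overline C,\mathscr O_{\overline C})=0$, and section deformations are unobstructed by Lemma~\ref{lem:h1'} -- and then transport back to $C'$ along $\tau'$, where descent genuinely holds because $\tau'$ contracts only rational chains on which the bundle has degree zero and $R^1$ vanishes. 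Restructured this way your argument is sound and is what the paper's proof (compare Theorem~\ref{thm: vz-smoothness} and the remark following it, where the bundle lives on $\overline C$ from the start) really uses; but formal smoothness over the Picard stack of the uncontracted curve cannot be obtained from your descent step, since for deformations of $\mathscr L$ that are nontrivial on the contracted genus-$1$ locus the required factorization simply does not deform.
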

\begin{proof}
Once again, the key fact is that $\overline {\mathscr L}$ has positive degree on at least one branch of the component containing the genus $1$ singularity. Let $ {\mathscr U}\to \fM_{1,n}^{\mathrm{rad}}$ be the universal radially aligned curve.  Let $\mathfrak{Pic}({\mathscr U})$ be the relative Picard scheme over this curve. Note that $\mathfrak{Pic}({\mathscr U})$ is smooth over a smooth base, since obstructions to deforming line bundles on a curve $C$ lie in $H^2(C, \mathscr O_C)$, and vanish for dimension reasons. To prove smoothness of $\VQ_{1,n}(\PP^r,d)$ it suffices to show that the relative obstructions of the map
\[
\VQ_{1,n}(\PP^r,d) \to \mathfrak{Pic}({\mathscr U}).
\]
vanish. Let $(C,\overline C,L,\{s_i\})$ be a quasimap from a radially aligned curve, with the factorization property as described above. Fixing a deformation of the curve and line bundle $(C,L)$, the deformations of the sections are obstructed by $H^1(\overline C,\mathscr L)$. These obstructions were already shown to vanish in Lemma~\ref{lem:h1'}. 
\end{proof}

\begin{lemma} \label{lem:vq-proper}
$\VQ_{1,n}(\PP^r,d)$ is closed in $\mathcal Q_{1,n}(\PP^r,d)$.
\end{lemma}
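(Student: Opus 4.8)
The plan is to mirror the proof of Theorem~\ref{vz-properness}. By construction $\widetilde{\VQ}_{1,n}(\PP^r,d) = \mathcal Q_{1,n}(\PP^r,d)\times_{\fM_{1,n}}\fM_{1,n}^{\mathrm{rad}}$, so by Proposition~\ref{prop:modification} and base change it is a logarithmic modification of $\mathcal Q_{1,n}(\PP^r,d)$; in particular the underlying morphism $\widetilde{\VQ}_{1,n}(\PP^r,d)\to\mathcal Q_{1,n}(\PP^r,d)$ is proper, and since $\mathcal Q_{1,n}(\PP^r,d)$ is proper so is $\widetilde{\VQ}_{1,n}(\PP^r,d)$. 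It therefore suffices to prove that the tautological monomorphism $i\colon \VQ_{1,n}(\PP^r,d)\hookrightarrow\widetilde{\VQ}_{1,n}(\PP^r,d)$ is a closed immersion; the image of $\VQ_{1,n}(\PP^r,d)$ in $\mathcal Q_{1,n}(\PP^r,d)$ is then closed. That $i$ is a monomorphism is because, when the factorization property holds, the descended data are unique: $\gamma_\star\mathscr O_{\widetilde C}=\mathscr O_{\overline C}$ (Proposition~\ref{contraction}) forces $\overline{\mathscr L}=\gamma_\star\tau^\star\mathscr L$, and the projection formula identifies $H^0(\overline C,\overline{\mathscr L})$ with $H^0(\widetilde C,\tau^\star\mathscr L)$, pinning down the $\overline s_i$ as well. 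As in Theorem~\ref{vz-properness}, it then remains to check that $i$ is quasicompact and satisfies the valuative criterion for properness; these imply $i$ is universally closed, and being a monomorphism $i$ is a closed immersion.

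For quasicompactness, I would first observe that the factorization property simplifies: since every component of the contracted subcurve $E=E_{\delta_f}$ carries $\tau^\star\mathscr L$ in degree $0$ and $\gamma_\star\mathscr O_{\widetilde C}=\mathscr O_{\overline C}$, the property at a geometric point $s$ is equivalent to the single condition $\tau^\star\mathscr L\big|_{E_s}\simeq\mathscr O_{E_s}$ --- when this holds, $\overline{\mathscr L}=\gamma_\star\tau^\star\mathscr L$ is a line bundle by the standard descent lemma for contractions and the sections descend automatically. Quasicompactness is a condition in the constructible topology, so I would stratify $\widetilde{\VQ}_{1,n}(\PP^r,d)$ into locally closed substacks over which the combinatorial types of $C$, $\widetilde C$, $\overline C$, the contraction $\gamma$, and $E$ are constant; over such a stratum $E$ is flat, and the locus where $\tau^\star\mathscr L\big|_E$ is trivial is the preimage of the identity section under the classifying map to the relative $\mathrm{Pic}^0$ of $E$, hence closed and in particular constructible.

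For the valuative criterion, let $S=\Spec V$ with $V$ a valuation ring and generic point $\eta$, given the maximal logarithmic structure extending one on $\eta$, as in Theorem~\ref{vz-properness}; using that the contraction $\widetilde C\to\overline C$ commutes with base change to discrete valuation rings (Lemma~\ref{boundary}, cf.\ the proof of Proposition~\ref{contraction}), reduce to $S$ the spectrum of a discrete valuation ring, where $\overline C$ is flat with reduced fibers. Given an $S$-point of $\widetilde{\VQ}_{1,n}(\PP^r,d)$ --- a radially aligned curve $C\to S$ with quasimap $(\mathscr L,s_0,\dots,s_r)$, producing $\tau\colon\widetilde C\to C$ and $\gamma\colon\widetilde C\to\overline C$ --- whose restriction to $\eta$ lies in $\VQ_{1,n}(\PP^r,d)$, I would set $\overline{\mathscr L}:=\gamma_\star\tau^\star\mathscr L$. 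This sheaf is torsion free, hence flat over $S$, it agrees generically with the descended bundle $\overline{\mathscr L}_\eta$, and the adjunction map $\psi\colon\gamma^\star\overline{\mathscr L}\to\tau^\star\mathscr L$ is an isomorphism away from the genus~$1$ singularity of the closed fibre. The cokernel of $\psi$ is then a coherent sheaf of finite length supported at that point, and comparing Euler characteristics --- $\chi(\tau^\star\mathscr L)$ and $\chi(\gamma^\star\overline{\mathscr L})$ agree generically and are constant in the flat families $\tau^\star\mathscr L$ and $\gamma^\star\overline{\mathscr L}$ over $S$ --- forces this length to vanish, exactly as for the sheaf $\mathscr A/\mathscr O_{\overline C}$ in Theorem~\ref{vz-properness}. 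Hence $\psi$ is an isomorphism, $\overline{\mathscr L}$ is invertible, the sections $\tau^\star s_i$ descend through $\psi$, and the factorization property holds over $S$.

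The main obstacle is precisely this last step. In Theorem~\ref{vz-properness} one only had to descend a \emph{ring homomorphism} through $\mathscr O_{\overline C}$; here one must descend a line bundle together with its sections, and a priori $\tau^\star\mathscr L$ is only known to have degree $0$ --- not to be trivial --- on the contracted subcurve $E$, which moreover need not be flat over its image in $S$ (so one may need to filter $E$ into flat pieces as in the proof of Lemma~\ref{lem:cohom-kmu}). The Euler characteristic bookkeeping sketched above is what upgrades ``degree $0$ on $E$'' to ``trivial on $E_s$'' in the limit; once it is in place, the remaining points are routine transcriptions of the arguments of Section~\ref{sec: vz-construction}.
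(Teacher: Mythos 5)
Your skeleton (factor through $\widetilde{\VQ}_{1,n}(\PP^r,d)$, then show the tautological monomorphism is quasicompact and satisfies the valuative criterion) is a reasonable transcription of Theorem~\ref{vz-properness}, and the paper's own proof is indeed a valuative-criterion argument (applied directly to the monomorphism $\VQ_{1,n}(\PP^r,d)\to\mathcal Q_{1,n}(\PP^r,d)$, with no quasicompactness step). The gap is in your key mechanism for the limit: the Euler-characteristic comparison for $\psi\colon\gamma^\star\overline{\mathscr L}\to\tau^\star\mathscr L$ with $\overline{\mathscr L}=\gamma_\star\tau^\star\mathscr L$ does not work as stated. First, $\operatorname{coker}\psi$ is a sheaf on $\widetilde C$ supported on the whole contracted curve $E_s$ (which is $1$-dimensional), not at a point, so it need not have finite length. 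Second, constancy of $\chi$ along the fibers of $\gamma^\star\overline{\mathscr L}$ requires $S$-flatness of $\gamma^\star\overline{\mathscr L}$, which you do not know: it would follow from invertibility of $\overline{\mathscr L}$, which is exactly what you are trying to prove (and $\psi$ may also have a kernel). Most seriously, the argument never uses the quasimap sections, and without them the conclusion you are forcing is simply false: take the alignment radius generically zero (so the factorization over $\eta$ is vacuous) and $\mathscr L=\mathscr O(D-D')$ with $D,D'$ horizontal divisors meeting the contracted elliptic circuit $E_0$ of the closed fibre in points whose difference is nontrivial in $\operatorname{Pic}^0(E_0)$; then $\mathscr L$ has degree $0$ on $E_s$ and agrees generically with a descended bundle, yet does not descend in the limit. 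So ``degree $0$ on $E$'' cannot be upgraded to ``trivial on $E_s$'' by flatness bookkeeping alone.

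The missing idea --- and the heart of the paper's proof --- is that nondegeneracy of the quasimap supplies the triviality: the base locus of $(s_0,\ldots,s_r)$ is finite, so some $s_i$ is not identically zero on some component of the connected curve $E_s$; since $\tau^\star\mathscr L$ has degree $0$ on every component of $E_s$, that section is nowhere vanishing there, hence nonzero at the adjacent nodes, hence nowhere vanishing on all of $E_s$. Thus $\tau^\star\mathscr L$ is trivialized in a neighborhood of $E$, and descending it reduces to descending the structure sheaf, which was already done in the proof of Theorem~\ref{vz-properness}; the paper then sets $\overline{\mathscr L}=j_\star\overline{\mathscr L}_\eta\mathop\times_{j_\star\gamma_\star\tau^\star\mathscr L_\eta}\gamma_\star\tau^\star\mathscr L$, and invertibility, the comparison map, and the descent of the sections all follow from that trivialization together with the isomorphism $\mathscr O_{\overline C}\simeq j_\star\mathscr O_{\overline C_\eta}\mathop\times_{j_\star\gamma_\star\mathscr O_{\widetilde C_\eta}}\gamma_\star\mathscr O_{\widetilde C}$. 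Relatedly, your quasicompactness reduction is also incorrect as stated: at a geometric point, triviality of $\tau^\star\mathscr L\big|_{E_s}$ is necessary but not sufficient for the factorization property, because after trivializing, the sections give elements of $\gamma_\star\mathscr O_{\widetilde C_s}$ (a seminormalization-type algebra), and requiring them to lie in the strictly smaller Gorenstein subalgebra $\mathscr O_{\overline C_s}$ is an additional condition --- precisely the condition tracked by the obstruction homomorphism in the quasicompactness argument of Theorem~\ref{vz-properness} --- so the locus is not cut out by a map to a relative $\operatorname{Pic}^0$ alone.
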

\begin{proof}
Since $\VQ_{1,n}(\PP^r,d) \to \mathcal Q_{1,n}(\PP^r,d)$ is a monomorphism, it is sufficient to verify the valuative criterion.  Assume that $S$ is the spectrum of a valuation ring with generic point $j : \eta \to S$, and the maximal extension $M_S$ of a logarithmic structure $M_\eta$ on $\eta$, we want to lift a diagram~\eqref{eqn:val-lift}:
\begin{equation} \label{eqn:val-lift} \vcenter{\xymatrix{
\eta \ar[r] \ar[d] & \VQ_{1,n}(\PP^r, d) \ar[d] \\
S \ar[r] \ar@{-->}[ur] & \mathcal Q_{1,n}(\PP^r, d)
}} \end{equation}
The map $S \to \mathcal Q_{1,n}(\PP^r,d)$ gives a family, $C$, of logarithmic genus~$1$ curves over $S$, and a stable quasimap $(L, x_0, \ldots, x_n)$ on $C$.  The map $\eta \to \VQ_{1,n}(\PP^r, d)$ gives a radial alignment on $C_\eta$, which extends uniquely to $C$ by the properness of the space of radially aligned curves.  The quasimap $(L, x_0, \ldots, x_n)$ induces a contraction radius $\delta \in \Gamma(S, \overline M_S)$, which provides a destabilization $\upsilon : \widetilde C \to C$ and a contraction $\tau : \widetilde C \to \overline C$, all over $S$.

By assumption, $\upsilon^\star (L, x_0, \ldots, x_n) \big|_\eta$ descends along $\tau$ to a stable quasimap $(\overline L_\eta, \overline x_0, \ldots, \overline x_n)$ on $\overline C_\eta$.  We wish to show that $(L, x_0, \ldots, x_n)$ descends to $\overline C$.

Let $E$ be the interior of the contraction radius inside $\widetilde C$ --- the locus contracted by $\tau$.  By definition the contraction radius, $L$ has degree zero on all components of the fibers of $E$.  But $x_0, \ldots, x_n$ are sections of $L$ that do not vanish identically on any component of any fiber of $\widetilde C$ over $S$.  Therefore, $\upsilon^\star L \big|_E$ is trivialized by at least one of the $x_i$.

Now, let 
\begin{equation*}
\overline L = j_\star \overline L_\eta \mathop\times_{j_\star \tau_\star \upsilon^\star L_\eta} \tau_\star \upsilon^\star L .
\end{equation*}
As the map
\begin{equation} \label{eqn:pushforward-iso}
\mathscr O_{\overline C} \to j_\star \mathscr O_{\overline C_\eta} \mathop\times_{j_\star \tau_\star \mathscr O_{\widetilde C_\eta}} \tau_\star \mathscr O_{\widetilde C}
\end{equation}
is an isomorphism (see the proof of Theorem~\ref{vz-properness}), and $L$ can be trivialized in a neighborhood of $E$, the sheaf $\overline L$ is invertible on $\overline C$.  Moreover, there is a natural map $\tau^\star \overline L \to \upsilon^\star L$ which is an isomorphism away from $E$, since $\tau$ is an isomorphism there, and an isomorphism near $E$, by the isomorphism~\eqref{eqn:pushforward-iso}.

The sections $x_0, \ldots, x_n$ descend automatically to $\overline L$, so the proof of the valuative criterion, and of the lemma, is complete.
\end{proof}


\begin{remark}
One can construct $\VQ_{1,n}(\PP^r,d)$ as a blowup of $\mathcal Q_{1,n}(\PP^r,d)$ in analogous fashion to Vakil and Zinger's desingularization of Kontsevich space, sequentially blowing up the loci of quasimaps that have degree $0$ on a curve of arithmetic genus $1$, to arrive at the moduli space above. Also, as in the stable maps case, there is a centrally aligned variant where the blowups are done in a slightly more efficient fashion.
\end{remark}

\bibliographystyle{siam} 
\bibliography{EllipticStableMaps}

\end{document}